\renewcommand{\phi}{\varphi}
\newtheorem{theorem}{Theorem}
\newtheorem{lemma}{Lemma}
\newtheorem{remark}{Remark}
\newtheorem{proposition}{Proposition}
\newtheorem{assumption}{Assumption}
\title{Reservoir Computing with Error Correction: Long-term Behaviors of Stochastic Dynamical Systems} 
\author[a]{Cheng Fang \thanks{fangcheng1@hust.edu.cn}}
\author[b]{Yubin Lu \thanks{Corresponding author: ylu117@iit.edu}}
\author[a]{Ting Gao \thanks{tgao0716@hust.edu.cn }}
\author[c]{Jinqiao Duan \thanks{duan@gbu.edu.cn}} 
\affil[a]{School of Mathematics and Statistics \& Center for Mathematical Sciences, Huazhong University of Science and Technology, Wuhan, Hubei 430074, China}
\affil[b]{Department of Applied Mathematics, College of Computing, Illinois Institute of Technology, Chicago, IL 60616, USA}
\affil[c]{Department of Mathematics and Department of Physics, Great Bay University, Dongguan, Guangdong 523000, China}
\date{June 30, 2023}
\begin{document}
\maketitle

\begin{abstract}
The prediction of stochastic dynamical systems and the capture of dynamical behaviors are profound problems. In this article, we propose a data-driven framework combining Reservoir Computing and Normalizing Flow to study this issue, which mimics error modeling to improve traditional Reservoir Computing performance and integrates the virtues of both approaches. With few assumptions about the underlying stochastic dynamical systems, this model-free method successfully predicts the long-term evolution of stochastic dynamical systems and replicates dynamical behaviors. We verify the effectiveness of the proposed framework in several experiments, including the stochastic Van der Pal oscillator, El Ni\~no-Southern Oscillation simplified model, and stochastic Lorenz system. These experiments consist of Markov/non-Markov and stationary/non-stationary stochastic processes which are defined by linear/nonlinear stochastic differential equations or stochastic delay differential equations. Additionally, we explore the noise-induced tipping phenomenon, relaxation oscillation, stochastic mixed-mode oscillation, and replication of the strange attractor.
\par\textbf{Keywords: }Stochastic dynamical system, Stochastic delay differential equation, Reservoir Computing, Normalizing Flow, Dynamical behaviors.
\end{abstract}

\section{Introduction} \label{sec: introduction}

Stochastic dynamical systems describe the evolution of complex phenomena in the presence of noise. This noise may arise from a range of factors such as external fluctuations, internal agitation, uncertainty in initial/boundary conditions and parameters. The interplay between deterministic governing laws and noise results in corresponding stochastic dynamical behaviors. For example, stochastic (delay) differential equations (SDEs/SDDEs) \cite{duan2015introduction, mao2007stochastic, rihan2021delay} are two significant mathematical models that accurately describe the evolution of deterministic dynamical systems in the presence of noise. These models have demonstrated successful applications in various fields, including biology, chemistry, physics, and meteorology \cite{duan2015introduction, baxendale2007stochastic, rihan2021delay, weinan2021applied}.

For a given dynamical system, what we are concerned about are not only the state of the system, but also the dynamical behaviors it exhibits. Practical problems are usually complex and difficult to be described by concise mathematical models. As a result, their corresponding dynamical systems can be so complex that are difficult to solve. Therefore, one of the main strategies to study dynamical systems is to design appropriate tools that can characterize their behaviors, rather than solving them directly. Over the past century, researchers have developed various kinds of qualitative methods to characterize dynamical systems, such as bifurcations, chaos, attractors and their fractal dimensions\cite{strogatz2015nonlinear}. Furthermore, the dynamical behaviors of stochastic dynamical systems are richer and more challenging to quantify and analyze due to the impact of noise. For instance, for a multistable dynamical system, when noise is present, the state variable stays close to a metastable equilibrium for a long time and may sometimes cross the stable manifold and fall into the basin of attraction of another metastability. This phenomenon is also a kind of noise-induced tipping \cite{ashwin2012tipping}.

In spite of this principle-based research paradigm, one may encounter complex phenomena that lack sufficient scientific understanding. While it may not be possible to establish a complete mathematical model for highly complex phenomena, the ever-advancing fields of computer technology and observation techniques have made the data-driven research paradigm not only possible but also flourishing. In the last ten to twenty years, many data-driven methods for studying stochastic dynamical systems have been proposed and have shown a thriving development in various fields. For example, extracting stochastic governing laws from data by Koopman operators \cite{KLUS2020132416, Lu2020koopman}, Kramers-Moyal formulas \cite{Boninsegna2018sparse, lu2022extracting}, maximum likelihood estimations \cite{Dietrich2023sde, Fang2022}, physics-informed neural networks \cite{chen2021inverse, CHEN2023133691}, or estimating the probability distribution for the corresponding stochastic dynamical system \cite{LU2022nf}. Methods developed in the area of deep learning for analyzing time series are also used to study stochastic dynamical systems, for instance, forecasting the evolution of stochastic dynamical systems by Recurrent Neural Networks (RNN) \cite{HARLIM2021109922}, Neural SDE \cite{li2020scalable}, or statistics-informed neural networks \cite{ZHU2023111819}, and discovering dynamical behaviors from data \cite{HARLIM2021109922, ZHU2023111819}.

The term ``Reservoir Computing" (RC) \cite{nakajima2021reservoir} refers to a broad computational framework derived from RNN, which was developed by Jaeger \cite{Jaeger2004} and Maass et al. \cite{maass2002real, maass2011liquid}. These papers, in turn, introduced the ideas of Echo State Networks (ESN) and Liquid State Machines (LSM), respectively. The benefits of RC include its simple structure, easy training, and fixed reservoirs, which also make it effective for large-scale or high-dimensional datasets. RC has exhibited strong prediction capabilities that can guarantee prediction accuracy over a long period of time for chaotic systems \cite{Jaeger2004, yperman2016bayesian, pathak2018PhysRevLett, Griffith2019, gauthier2021next}. When concentrating on the dynamical behaviors of dynamical systems, RC replicates strange attractors, the basin of attraction, stable/unstable manifolds, etc \cite{pathak2018PhysRevLett, Griffith2019, mousumi2022multi, dagobert2022delaybased, gauthier2022unseen}. This method has been employed with success for more challenging tipping and critical transition phenomena in dynamical systems \cite{Lim2020critical, Patel2021climate, patel2023post}. As for its interpretability, the properties, the structure of its latent representation and the universality are also in progress \cite{JAEGER2007335, yildiz2012re, Griffith2019, Lukas2020, GONON202110}. Several new architectures of RC are being developed \cite{GALLICCHIO201787, gauthier2021next, dagobert2022delaybased}. Physical RC, another branch of the RC research field, refers to hardware implementation using various physical systems, substrates, and devices \cite{TANAKA2019100}. Besides, there are few studies utilizing RC for dynamical systems with noise, which is discussed in detail in Sec. \ref{sec: related work}.

Reservoir computing has manifested powerful approximation capabilities for deterministic dynamical systems, but due to the lack of an effective stochastic structure, it is still insufficient for stochastic dynamical systems. This is manifested by the rapid accumulation of errors with rolling predictions (see Fig. \ref{fig: subfig: ou rc} in Sec. \ref{sec: exper: ou}). In other words, proper probabilistic modeling of errors is a key aspect in dealing with stochastic dynamical systems \cite{HARLIM2021109922, GOTTWALD2021132911, levine2022framework}. Fortunately, the probabilistic generative models in the machine learning community are flourishing and have shown effective approximation capabilities for high-dimensional and complex probability distributions. The most popular models in recent years include Generative Adversarial Networks (GANs) \cite{NIPS2014_5ca3e9b1}, Variational Auto-Encoders (VAEs) \cite{Kingma2014}, Diffusion Models (DMs) \cite{ho2020denoising} and Normalizing Flows (NFs) \cite{NEURIPS2019_7ac71d43, papamakarios2021normalizing, Kobyzev2021,lu2022extracting, LU2022nf}, which have been successfully applied in areas such as image generation and speech synthesis.


Reservoir computing is an effective approximator for deterministic dynamical systems, while probabilistic generative models are designed to learn the underlying probability distribution of a dataset. Inspired by this, we combine Reservoir Computing and Normalizing Flow (RC-NF for short) to realize long-term prediction of stochastic dynamical systems and capture various dynamical behaviors of stochastic dynamical systems. This framework is model-free and imposes fewer constraints on the studied system. The main contributions of this work include:

\begin{itemize}
    \item Methodological innovation: the improved RC method (i.e., RC-NF introduced below) is successfully applied in stochastic dynamical systems, both defined by stochastic differential equations and stochastic delay differential equations, which can be extended to more complex stochastic systems.
    \item Dynamical behaviors capturing: we successfully capture the dynamical behaviors of stochastic dynamical systems, such as noise-induced tipping, relaxation oscillation, stochastic mixed-mode oscillation, and chaos. For stochastic chaotic systems, RC-NF is more effective than traditional RC.
    \item Robustness: we successfully predict the long-term evolution of various kinds of stochastic dynamical systems, including linear/non-linear systems, Markov/non-Markov processes, and stationary/ non-stationary models.
\end{itemize}
From a theoretical perspective, we also demonstrate the universality of RC-NF to further explain the role of the various components of the framework. We further illustrate the effectiveness of the approach using criteria such as Wasserstein distance, Kullback-Leibler divergence, transition rate, maximum Lyapunov exponent, close returns, autocorrelation function, and cross-correlation function in different experiments.

The remainder of this article is arranged as follows: Sec. \ref{sec: preliminary work} introduces stochastic dynamical systems defined by stochastic (delay) differential equations, related works, and unresolved issues. In Sec. \ref{sec:data-driven}, we begin by presenting the general frameworks of reservoir computing and normalizing flow. We then combine these frameworks to analyze stochastic dynamical systems using data. At the end of this section, a thorough analysis of the approximation ability is provided. In Sec. \ref{sec: experiments}, we demonstrate the effectiveness of our method through validation on a variety of examples, including the Ornstein–Uhlenbeck process, Double-Well system, stochastic Van der Pol oscillator, stochastic mixed-mode oscillation, linear SDDE, El Ni\~no-Southern Oscillation simplified model and stochastic Lorenz system. In Sec. \ref{sec: discussion}, we discuss our findings and potential directions for improving this work.

\section{Preliminary works} \label{sec: preliminary work}

In this section, we briefly introduce stochastic dynamical systems and highlight some related data-driven approaches that have been applied to them. Based on this, we point out the goals of this work. In the following, bold letters denote multidimensional vectors or random variables, while the subscript `t' indicates time (continuous or discrete, depending on the context). The superscript `$i$' represents different dimensions, and the superscript `$(m)$' denotes different sample trajectories, unless otherwise specified.

\subsection{Stochastic dynamical systems} \label{sec: sde}

Stochastic differential equations and stochastic delay differential equations are two types of models with significantly different characteristics. One of the key differences between the two is that the solutions of the former are Markov processes, whereas the solutions of the latter are typically non-Markov processes. This makes it difficult for general data-driven models to handle both types of systems simultaneously. 

\noindent$\bullet$ \textbf{Stochastic differential equations}

We consider the following stochastic dynamical systems defined by stochastic differential equations (SDEs) in $\mathbb{R}^d$ \cite{duan2015introduction}:
\begin{equation}
    d \bm{X}_t = \bm{f}(\bm{X}_t) dt + \bm{g} d\bm{B}_t, \label{eq: sde}
\end{equation}
where $t \in \mathbb{R}^+$, $\bm{X}_t = [X^1_t, X^2_t,\dots, X^d_t]^T \in \mathbb{R}^d$ (the superscript $T$ stands for transpose of a vector or a matrix), the drift coefficient $\bm{f}(\bm{X}_t)\in \mathbb{R}^d$  and the noise intensity (or diffusion coefficient matrix) $\bm{g}\in \mathbb{R}^d\times\mathbb{R}^d$. In this article, we assume that $\bm{g}$ is a diagonal matrix and that the elements on the diagonal are positive constants. A discussion of $\bm{g}$ as a non-diagonal matrix can be found in \cite{Fang2022}. The Brownian motion $\bm{B}_t$, taking values in $\mathbb{R}^d$, is a Gaussian process on an underlying probability space $(\Omega, \mathcal{F}, \mathit{P})$. More specifically, the stochastic process $\bm{B}_t$ satisfies: (a) $\bm{B}_0 = 0$, a.s.; (b) $\bm{B}_t$ has continuous paths, a.s.; (c) $\bm{B}_t$ has independent and stationary increments, and $\bm{B}_t - \bm{B}_s \sim N(\bm{0}, (t-s)\bm{I})$, for $t>s\ge 0$, where $N$ represents Gaussian distribution and $\bm{I}$ is the $d \times d$ identity matrix. Moreover, we should notice that the solution $\bm{X}_t$ of (\ref{eq: sde}) is a Markov process. 

It is well known that the probability density function for a given SDE satisfies a Fokker-Planck equation. That is, the probability density function $\bm{p}$ for the SDE (\ref{eq: sde}) satisfies
\begin{equation}
    \frac{\partial}{\partial t} \bm{p}(\bm{x},t) = A^* \bm{p}(\bm{x},t), \label{eq: fp equation}
\end{equation}
where $A^*$ is often called the Fokker-Planck operator, $A^*\bm{p} = - \sum_{i=1}^d \frac{\partial}{\partial x^i} (f^i \bm{p}) + \frac{1}{2} \sum_{i,j=1}^d \frac{\partial^2}{\partial x^i \partial x^j} ((\bm{gg^T})^{ij} \bm{p})$.

\noindent$\bullet$ \textbf{Stochastic delay differential equations} 

Stochastic delay differential equations (SDDEs) are appropriate models for describing stochastic dynamical systems with memory. A stochastic delay differential equation with delay $\tau$ in $\mathbb{R}^d$ is described by 
\begin{equation} \label{eq: sdde}
    \begin{cases}
    d \bm{X}_t=\bm{f}(\bm{X}_t,\bm{X}_{t-\tau})dt+\bm{g} d\bm{B}_t, \quad\text{for} \ t>0,\\ 
    \bm{X}_t=\bm{\gamma}_{-t},\quad\text{for}\ t\in[-\tau,0],
    \end{cases}
\end{equation}
where $\tau \in \mathbb{R}^+$ is the time delay, $\bm{\gamma}: [0, \tau] \rightarrow \mathbb{R}^d$, $\bm{f}: \mathbb{R}^d \times \mathbb{R}^d \rightarrow \mathbb{R}^d$, $\bm{g} \in \mathbb{R}^d \times \mathbb{R}^d$ is a diagonal matrix and the elements on the diagonal are positive constants. The non-Markov property resulting from delays makes it challenging to obtain governing equations for probability densities associated with SDDEs. Despite this, as demonstrated below, our framework is suitable for stochastic dynamical systems induced by SDDEs.

Based on the theory of stochastic analysis, powerful mathematical tools have been developed for the study of stochastic dynamical systems. However, due to the presence of randomness, many analytical tasks have become particularly challenging. For example, there are fewer examples of stochastic differential equations that can be solved explicitly compared to ordinary differential equations, and the definition of the dynamical behavior of stochastic dynamical systems is more complex. Therefore, it is necessary to develop data-driven methods for analyzing stochastic dynamical systems, especially for studies that can simultaneously handle both SDEs and SDDEs.

\subsection{Related works and unresolved issues} \label{sec: related work}

Various data-driven approaches have been proposed for the analysis of stochastic dynamical systems. However, most existing works heavily rely on specific model assumptions. For instance, the methods utilizing the Koopman operator \cite{KLUS2020132416, Lu2020koopman} or Kramers-Moyal formulas \cite{Boninsegna2018sparse, lu2022extracting} necessitate the assumption that the systems under study are stochastic differential equations. Maximum likelihood estimations \cite{Dietrich2023sde, Fang2022} are constructed based on numerical schemes of stochastic differential equations. The physics-informed neural networks are used to learn Fokker-Planck equations corresponding to the stochastic dynamical systems \cite{chen2021inverse, CHEN2023133691}. In other words, these methods overly rely on assumptions of the model, which limits their extension. These methods are not applicable to SDDEs, which serves as a concrete example.

As a type of Recurrent Neural Network, Reservoir Computing is a model-free method that can be directly applied to time series modeling, while its structure is simpler than other methods \cite{HARLIM2021109922, li2020scalable, ZHU2023111819}. Moreover, it has demonstrated strong approximation capabilities for deterministic dynamical systems, regardless of the presence of time-delay effects. Therefore, it has the potential to become a powerful tool for studying stochastic dynamical systems. There are a few applications of RC for stochastic systems. For example, the effect of stochasticity is also addressed in \cite{Patel2021climate,patel2023post}. They assume the noise originates from observations and obeys a uniform distribution. However, the randomness is treated as external noise. SDEs or SDDEs defined in Sec. \ref{sec: sde} treat the randomness as intrinsic noise and are more widely studied. Moreover, the systems studied in \cite{Patel2021climate,patel2023post} have similar dynamical behaviors (eg., intermittency, chaos) whether noise is present or not. While the systems we studied show significantly different dynamical behaviors compared to their deterministic counterparts since the noise is treated as an intrinsic part. In \cite{Lim2020critical}, authors investigate stochastic systems and select a specific trajectory with constraints for training, while our method works on trajectory data without these constraints. Besides, in \cite{khovanov2021sto}, authors use RC to study the stochasticity caused by numerical schemes for determining systems. The method mentioned in \cite{LIAO2022137} requires the cooperation of physical hardware to realize the mask of data on stochastic resonance systems. In contrast, we directly use trajectories of stochastic dynamical systems without mask operations.

The primary objective of this study is to propose a novel method to make long-term predictions regarding the evolution of stochastic dynamical systems. To be more precise, the aim is to use finite-length trajectories generated or observed from a stochastic process in order to predict probability distributions consisting of predicted trajectories in future instances. Not only can the trained model generate trajectories, but it can also predict the dynamical behaviors of stochastic dynamical systems, including transitions between metastabilities, noise-induced tippings, relaxation oscillations, stochastic mixed-mode oscillations, strange attractors, and more.

\section{Data-driven forecasting for stochastic dynamical systems} \label{sec:data-driven}

Stochastic dynamical systems induced by stochastic differential equations or stochastic delay differential equations could exhibit extremely different dynamical behaviors, such as converging to an equilibrium distribution, transition between metastable states, time-delay behavior, and chaotic behavior. In order to predict the future states or capture the dynamical behavior of a stochastic dynamical system from observed time series data, we combine Reservoir Computing and a deep generative model (e.g., Normalizing Flow) to establish a unified framework for forecasting long-term evolution and capturing the dynamical behavior. In Sec. \ref{sec: rc}, we briefly introduce the general description of Reservoir Computing. In Sec. \ref{sec: NF}, we present how to compensate for the approximated errors coming from the surrogate model (Reservoir Computing). In Sec. \ref{sec: rc-nf}, we illustrate how to combine these two key ingredients (Reservoir Computing and Normalizing Flow) for studying stochastic dynamical systems. In Sec. \ref{sec: convergence}, we analyze the universality of the proposed approach.

\subsection{Reservoir Computing for long-term forecasting} \label{sec: rc}

The umbrella term called Reservoir Computing (RC) is a type of Recurrent Neural Network, which is well known for its long prediction time, few training parameters, and low computational cost. We provide an outline framework of Reservoir Computing as follows.

Consider a time-discrete stochastic dynamical system (\ref{eq: sde}) or (\ref{eq: sdde}) $\{\bm{X}_t\}_{t=0}^{T-1}$, where $\bm{X}_t\in\mathbb{R}^d$ and $T$ is the length of time series. The time step size is denoted by $\Delta t$. Assume that the reservoir has $N$ nodes, then the $d$ dimension input vector at each iteration is coupled into the reservoir via an $N \times d $ dimensional input-to-reservoir coupling matrix $W_{in}$. We construct this matrix by randomly sampling each matrix element from a uniform distribution on the interval $[-\chi/2, \chi/2]$. The state of the reservoir at time $t$ is denoted by the $N$ dimensional vector $\bm{r}_t = [r^1_t,r^2_t,\dots,r^N_t]^T$. The time evolution of the reservoir state $\bm{r}_t$ is determined by \cite{nakajima2021reservoir}
\begin{equation} \label{eq: rc update}
    \bm{r}_{t+1} = (1 - \alpha) \bm{r}_t + \alpha \tanh (A \bm{r}_t+ W_{in} \bm{X}_t + \bm{\zeta}),
\end{equation}
when $\Delta t \rightarrow 0$, equation (\ref{eq: rc update}) can be approximated as
\begin{equation}
    \Dot{\bm{r}}_t \thickapprox \frac{\bm{r}_{t+1}-\bm{r}_{t}}{\Delta t} = - \frac{\alpha}{\Delta t} \bm{r}_t + \frac{\alpha}{\Delta t} \tanh (A \bm{r}_t+ W_{in} \bm{X}_t + \bm{\zeta}),
\end{equation}
where $A$ (an $N \times N$ matrix) is the adjacency matrix of the reservoir, $\alpha$ defines the leakage factor, the hyperbolic tangent function is applied element-wise to the vector, and the elements of the column vector $\bm{\zeta} \in \mathbb{R}^N$ are also sampled from the uniform distribution on the interval $[-\chi/2, \chi/2]$. The initial condition for the hidden state can be conveniently set to be $\bm{r}(t=0) = \bm{0}$. The adjacency matrix $A$ defines a random network of size $N$ and average degree $\langle k \rangle$.  Here, the average degree $\langle k \rangle$ of a network is the average number of links that a node has. Furthermore, matrix $A$ is rescaled such that the spectral radius of the network is $\rho$. The spectral radius $\rho$ of a network is the largest absolute value of the eigenvalues of its adjacency matrix. An intriguing and significant aspect of RC is that, once the aforementioned matrices $A$, $W_{in}$ and vector $\bm{\zeta}$ are created, they are never trained and are always fixed. This means that input vectors enter the reservoir in a fixed manner and that the connections between neurons within the reservoir do not change.

The input sequence and the desired output sequence in our work are essentially products of the same system and share the same dimension (for more general tasks, they are inconsistent). Subsequently, using a readout matrix, the estimation of RC at time $t+1$ is obtained,
\begin{equation} \label{eq: rc readout}
    \bm{\hat{X}}_{t+1} = W_{out} [1; \bm{X}_t; \bm{r}_{t+1}]^T,
\end{equation}
where output layer $W_{out}$ is a $d \times (1+ d + N)$ matrix with trainable weights. By minimizing the $\mathcal{L}_2$ error between the target states $\bm{X}_t$ and the estimate states $\bm{\hat{X}}_t = W_{out} [1; \bm{X}_t; \bm{r}_{t+1}]^T$, the matrix $W_{out}$ is trained. More specifically, the loss function is 
\begin{equation} \label{eq: loss}
    \mathcal{L}_{RC} = \sum_{t= 1}^{T-1} ||\bm{\hat{X}}_t - \bm{X}_t||^2 + \lambda ||W_{out}||^2,
\end{equation}
where $\lambda > 0$ is the regularization hyperparameter, the term $\lambda ||W_{out}||^2$ is added to prevent overfitting. Using the Tikhonov transformation, also called ridge regression, the above loss function (\ref{eq: loss}) can be solved in closed form:
\begin{equation} \label{eq: Tikhonov}
    W_{out} = \bm{Y} \bm{R}^T (\bm{R} \bm{R}^T + \lambda \bm{I})^{-1},
\end{equation}
where $\bm{R}$ is the states matrix of dimension $(1+d+n) \times (T-1)$, built using combinations of column vectors $[1; \bm{X}_{t-1}; \bm{r}_t]^T$ for every $t = 1, \dots, T-1$. The matrix $\bm{Y}$ is built in the same way using $\bm{X}_t$, for $t=1, \dots, T-1$. $\bm{I}$ is an identity matrix of dimension $1 + d + N$. 

The prediction phase uses the same update equations as those above (formulae (\ref{eq: rc update}), (\ref{eq: rc readout})) after the matrix $W_{out}$ obtained, but the input $\bm{X}_t$ is represented by the calculated output $\bm{\hat{X}}_t$ of the preceding step.


The construction of the RC framework involves several hyperparameters, and in this article, we focus on the impact of five specific hyperparameters on RC: the spectral radius $\rho$ and average degree $\langle k \rangle$ of adjacency matrix $A$; the sampling range $\chi$ of elements of matrix $W_{in}$; leakage factor $\alpha$; regularization parameter $\lambda$. Selecting suitable hyperparameters is crucial because the trainable component of RC is only readout matrix $W_{out}$, its fixed part is less malleable and therefore relies heavily on the appropriate choice of hyperparameters. The primary methods for hyperparameter tuning include grid search \cite{Rodan2011}, gradient descent \cite{JAEGER2007335}, and others.

Bayesian Optimization (BO) \cite{yperman2016bayesian, Griffith2019, karkkainen2022optimal} is a sequential design strategy for global optimization of black-box functions that does not assume any functional forms. It is more effective than grid search and performs well with minimal tuning. At each iteration, the BO algorithm chooses a set of hyperparameters from a selectable range of hyperparameters based on the posterior distribution of a loss function. Exploit uncertainty to balance exploration against exploitation. Inspired by \cite{yperman2016bayesian, Griffith2019, karkkainen2022optimal}, we select the range of each hyperparameter as shown in Table \ref{tab: hyper}, considering the efficiency of the algorithm and the reasonableness of the hyperparameters.

\begin{table}[!ht] 
\caption{The value range and data type of hyperparameters of RC in the Bayesian Optimization algorithm}
\label{tab: hyper}
\centering
\begin{tabular}{ccc} 
\bottomrule
\textbf{Hyperparameters} & \textbf{Min - Max} & \textbf{Data type}\\ 
\hline
$\rho$ & 0.3 - 1.5 & Real number\\
$k$ & 1 - 5 & Integer\\
$\chi$ & 0.3 - 1.5 & Real number \\
$\alpha$ & 0.05 - 1 & Real number\\
$\lambda$ & $10^{-10} - 1$ & Real number\\
\bottomrule
\end{tabular}
\end{table}

\subsection{Error modelling} \label{sec: NF}

It is a fundamental fact that when a model generates multi-step rolling predictions, errors can accumulate rapidly. However, for deterministic systems, reservoir computing can ensure a certain level of time prediction accuracy. For example, traditional Reservoir Computing has achieved 5–6 Lyapunov times in the prediction of large-scale spatiotemporal chaotic sequences \cite{pathak2018PhysRevLett}. The presence of stochasticity in dynamical systems renders Reservoir Computing alone inadequate for accurately predicting their long-term behavior (as seen in Sec. \ref{sec: experiments}). Due to the rolling prediction characteristics of RC, we are inspired by Kalman filtering \cite{Kalman1960, GOTTWALD2021132911} and error modeling \cite{levine2022framework, HARLIM2021109922} to compensate for the performance of RC in stochastic systems. RC can be regarded as a surrogate model for the original system. After modeling the error between the two, a strategy similar to Kalman Filtering is used to correct the subsequent predictions. Unlike deterministic systems, given the inherently random nature of stochastic dynamical systems, it is appropriate to assess the accuracy of predictions in terms of their probability distributions.

For a trajectory of a stochastic dynamical system, the single-step error at time $t$ between the RC prediction $\hat{\bm{X}}_t$ and true trajectory data $\bm{X}_t$ is $\tilde{\bm{\varepsilon}}^{(m)} := \bm{X}_t^{(m)} - \hat{\bm{X}}_t^{(m)}$, where $m =1, \dots, M$ represents $M$ different trajectories from the same stochastic dynamical system. The notation $\bm{p}(\tilde{\bm{\varepsilon}})$ represents the probability density formed by error $\tilde{\bm{\varepsilon}} := \{\tilde{\bm{\varepsilon}}^{(m)}\}$, $\mu_{\tilde{\bm{\varepsilon}}}$ stands for its probability measure. Under appropriate assumptions (Assumption \ref{assump: fix error} in Sec. \ref{sec: convergence}), we claim that the probability density $\bm{p}(\tilde{\bm{\varepsilon}})$ of the single-step error $\tilde{\bm{\varepsilon}}$ does not change with time. It might be a strong assumption, but we will see that it is reasonable for the studied systems below. Further exploration of this issue is left for future research. Various probabilistic generative models can be employed to estimate the probability distribution of the error $\tilde{\bm{\varepsilon}}$ from data including Generative Adversarial Networks \cite{NIPS2014_5ca3e9b1}, Variational Auto-Encoders \cite{Kingma2014}, Diffusion Models\cite{ho2020denoising} and Normalizing Flows \cite{NEURIPS2019_7ac71d43, papamakarios2021normalizing, Kobyzev2021, lu2022extracting, LU2022nf}. Unlike high-dimensional data such as images, our work focuses primarily on lower-dimensional physical systems. Hence, we employ Normalizing Flows to estimate error distributions. It is worth mentioning that alternative probabilistic generative models could potentially be utilized here. However, they do not have any essential differences. 

The main idea of Normalizing Flow (NF) is to find an invertible, differentiable transformation mapping the target random variable $\tilde{\bm{\varepsilon}} \in \mathbb{R}^d$ to a base random variable $\bm{u} \in \mathbb{R}^d$:
\begin{equation} \label{eq: nf h_theta}
    \tilde{\bm{\varepsilon}}=\bm{h}_\theta^{-1}(\bm{u}) \quad \bm{u}=\bm{h}_\theta(\tilde{\bm{\varepsilon}}),
\end{equation}
where the map $\bm{h}_\theta:\mathbb{R}^d\to\mathbb{R}^d$ is invertible, differentiable, and 
$\theta$ represents all parameters to be trained. The change of variables formula is that
\begin{equation} \label{eq: nf p trans}
\underbrace{\bm{p}(\tilde{\bm{\varepsilon}})}_{\text{over} \tilde{\bm{\varepsilon}}} = \underbrace{\bm{p}(\bm{h}_\theta(\tilde{\bm{\varepsilon}}))}_{\text{over}\bm{u}}\left|\det\left(\frac{\partial \bm{h}_{\theta} (\tilde{\bm{\varepsilon}})}{\partial \tilde{\bm{\varepsilon}}}\right)\right|,
\end{equation}
where $\frac{\partial \bm{h}_{\theta} (\tilde{\bm{\varepsilon}})}{\partial \tilde{\bm{\varepsilon}}}$ denotes the $d \times d$ Jacobian matrix.

Usually, we select a simple distribution as the base distribution, $\bm{u} \sim N(\bm{0},\bm{I})$. Suppose we compose $k$ transformations $\bm{h}_{\theta}(\tilde{\bm{\varepsilon}})=\bm{h}_{1} \circ \bm{h}_{2} \circ \ldots 
\circ \bm{h}_{k}(\tilde{\bm{\varepsilon}};\theta)$. The log-likelihood can be well decomposed,
\begin{equation} \label{eq: nf lpg p trans}
    \log \bm{p}(\tilde{\bm{\varepsilon}})=\log \bm{p} \left(\bm{h}_{\theta}(\tilde{\bm{\varepsilon}})\right) + \sum\limits_{i=1}^{k}\log \left| \det\dfrac{\partial \bm{h}_{i}(\tilde{\bm{\varepsilon}};\theta)}{\partial \tilde{\bm{\varepsilon}}}\right|.
\end{equation}

For the prediction results $\{\hat{\bm{X}}_t\}_{t=1}^{T-1}$ of RC, it can be regarded as single-step predictions of trajectory data $\{{\bm{X}}_t\}_{t=0}^{T-2}$. Since we assume that $\bm{p}(\tilde{\bm{\varepsilon}})$ does not change over time, we can obtain a total of $M \times (T-1)$ samples of $\tilde{\bm{\varepsilon}}$. To learn the transformation $\bm{h}_\theta$, we minimize the following negative log-likelihood,
\begin{equation} \label{eq: loss nf}
\begin{split}
    \mathcal{L}_{NF} =& - \sum_{j=1}^{M(T-1)} \log \bm{p}(\tilde{\bm{\varepsilon}}^{(j)}) \\
    =&  - \sum_{j=1}^{M(T-1)} \left( \log \bm{p} (\bm{h}_{\theta}(\tilde{\bm{\varepsilon}}^{(j)}) ) + \sum\limits_{i=1}^{k}\log 
\left| \det\dfrac{\partial \bm{h}_{i}(\tilde{\bm{\varepsilon}}^{(j)};\theta)}{\partial \tilde{\bm{\varepsilon}}^{(j)}} \right| \right).
\end{split}
\end{equation}
That is, maximizing the log-likelihood function of $\tilde{\bm{\varepsilon}}$. After training, it is easy to sample from the target distribution as long as $\bm{h}_{\theta}^{-1}$ is tractable. It is only necessary to sample from the Gaussian distribution $\bm{u}$ and apply $\bm{h}_{\theta}^{-1}$ to these samples. We note that the prediction $\tilde{\bm{X}}_t$ is the RC one-step prediction $\hat{\bm{X}}_t$ corrected by NF error distribution $\tilde{\bm{\varepsilon}}$, i.e, $\tilde{\bm{X}}_t = \hat{\bm{X}}_t + \tilde{\bm{\varepsilon}}$ for $t \ge T$. In this way, we are able to forecast the long-term evolution of stochastic process $\bm{X}$.

For the invertible and differentiable mapping $\bm{h}_{\theta}$, general options include Real NVP, ActNorm, Masked Autoregressive Flow, and others \cite{Kobyzev2021}. The rational-quadratic neural splines flow which may feature autoregressive layers (RQ-NSF (AR)) \cite{NEURIPS2019_7ac71d43} is used in this article. Rational Quadratic Splines (RQS) denote $\bm{h}_\theta$ as a monotone rational-quadratic spline on an interval as the identity function otherwise. The spline is defined by $K+1$ knots and the derivatives at the $K-1$ inner points. The positions of these knots and their corresponding derivatives are represented by the output of a neural network. An autoregressive layer is utilized to generate each entry of the output based on the previous entries of the input. We write RQ-NSF (AR) as:
\begin{equation}
h^i_\theta(\tilde{\varepsilon}^i)=\operatorname{RQS}_{\Theta(\tilde{\bm{\varepsilon}}^{1:i-1})}(\tilde{\varepsilon}^i),\quad \text{for}\  i = 1,\dots,d,
\end{equation}
where RQS represents the Rational Quadratic Splines method, the symbol $\Theta$ is a neural network, and the vector $\tilde{\bm{\varepsilon}}^{1:i-1} = (\tilde{\varepsilon}^1,\dots,\tilde{\varepsilon}^{i-1})^T$.

\subsection{Combining Reservoir Computing and Normalizing Flow} \label{sec: rc-nf}

Reservoir computing is good at capturing the evolutionary trend of the system, making it suitable as a surrogate model for long-term system prediction. Unlike the prediction of a deterministic system, normalizing flows are introduced as a compensatory mechanism for error modeling due to the presence of inherent noise. In this section, we provide a framework for modeling and predicting stochastic dynamical systems by combining Reservoir Computing and Normalizing Flow. For the sake of convenience, we shall henceforth refer to this method as RC-NF.

$\bullet$ \textbf{Data}: We generate $M$ trajectories with a total length of $T+T_{valid}+T_{test}$ from SDE (\ref{eq: sde}) or SDDE (\ref{eq: sdde}). To train the model, we utilize the initial $T$ states, validate it using the intermediate $T_{valid}$ states, and test it using the final $T_{test}$ states. 

$\bullet$ \textbf{Training (Reservoir Computing)}: Select a set of hyperparameters to generate matrices $A$, $W_{in}$ and vector $\bm{\zeta}$ to compose the reservoir structure. $M$ trajectories entered the reservoir by matrix $W_{in}$ in chronological order and obtained the reservoir state $\bm{r}_t$ by formula (\ref{eq: rc update}). The first $T_{warm}$ states are used to warm up the RC framework in order to eliminate the influence of the given initial value $\bm{r}_{0}$. After the warm-up, trajectory data with a total length of time $T-T_{warm}-1$ is used to train $W_{out}$ through Tikhonov transformation (\ref{eq: Tikhonov}). Utilizing the matrix $W_{out}$, one-step prediction $\hat{\bm{X}}_t$ is obtained by formula (\ref{eq: rc readout}).

$\bullet$ \textbf{Validation (Reservoir Computing)}: Based on the trained readout matrix $W_{out}$, rolling predictions are made at a few $T_{valid}$ steps. The $\mathcal{L}_2$ error between the forecast $\hat{\bm{X}}_t$ and the data $\bm{X}_t$, $\mathcal{L}_2 = \sum_{t=T+1}^{T+T_{valid}}(\hat{\bm{X}}_t- \bm{X}_t)^2$, is calculated as a loss function of Bayesian Optimization. BO refines the set of hyperparameters by utilizing the posterior distribution of the loss function, before resuming the training phase with the updated parameters. We select the set of hyperparameters that correspond to the minimum $\mathcal{L}_2$ loss during the BO iteration process as the optimal hyperparameters for RC. 

$\bullet$ \textbf{Training (Normalizing Flow)}: Based on the readout matrix $W_{out}$ and single-step prediction $\hat{\bm{X}}_t$ obtained by the optimal RC hyperparameters, we collect $M\times(T-T_{warm}-1)$ samples of the single-step error $\tilde{\bm{\varepsilon}}$ during the training phase. Subsequently, Normalizing Flow is used to estimate the probability density function $\bm{p}(\tilde{\bm{\varepsilon}})$ from these samples, obtaining the optimal mapping $\bm{h}_\theta$ between the target distribution and a pre-specified base distribution.

$\bullet$ \textbf{Testing (long-term prediction)}: For the $m$-th trajectory, the compensated single-step prediction can be expressed as $\tilde{\bm{X}}_t^{(m)}:= \hat{\bm{X}}_t^{(m)} + \tilde{\bm{\varepsilon}}^{(m)}$ where $\tilde{\bm{\varepsilon}}^{(m)}$ is a new sample generated by NF, $t \in [T+T_{valid}+1,T+T_{valid}+T_{test}]$. Regarding $\tilde{\bm{X}}_t^{(m)}$ as the input of the RC framework at time $t+1$, our strategy results in rolling long-term predictions for the $m$-th trajectory. If there are enough trajectories of the stochastic process $\bm{X}$, the probability density $\bm{p}(\bm{X}_t)$ of $\bm{X}$ at time $t$ can be approximated effectively.

$\bullet$ \textbf{Generating}: For the reservoir structure $A$, $W_{in}$, $W_{out}$ and the probability density $\bm{p}(\tilde{\bm{\varepsilon}})$ of error are learned by the fixed stochastic dynamical system, a trajectory of any length can be generated by rolling a short warm-up trajectory. More specifically, assume a short trajectory $\{\bm{X}_t\}_{t=0}^{T_{warm}-1}$ for warm-up, the input of RC at the time interval $[0,T_{warm}-1]$ is given by this trajectory. After $T_{warm}$ steps, the RC input is given by the corrected $\tilde{\bm{X}}_t$, which is obtained by plugging the compensated error of NF into the single-step prediction of RC.

Fig. \ref{fig: subfig: RC-NF} presents the overall framework in diagram form, while Fig. \ref{fig: subfig: time} depicts the time allocation for testing and generation tasks. Additionally, the pseudocode for RC-NF is outlined in Algorithm \ref{alg: rc-nf}.

\begin{figure}[!ht]
  \centering
   \subfigure[RC-NF framework]{
    \label{fig: subfig: RC-NF} 
    \includegraphics[width=0.85\textwidth]{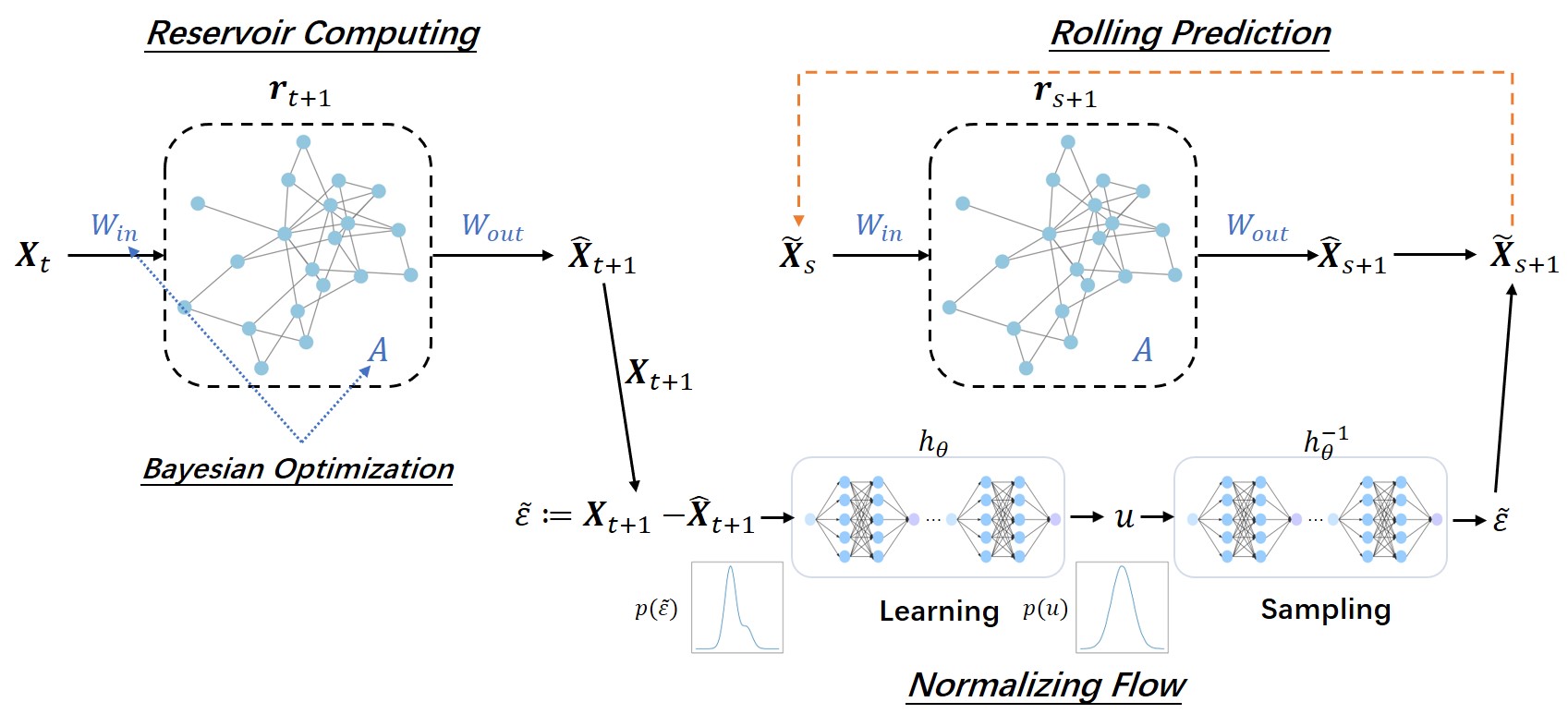}}\\
  \subfigure[Time allocation]{
    \label{fig: subfig: time} 
    \includegraphics[width=0.85\textwidth]{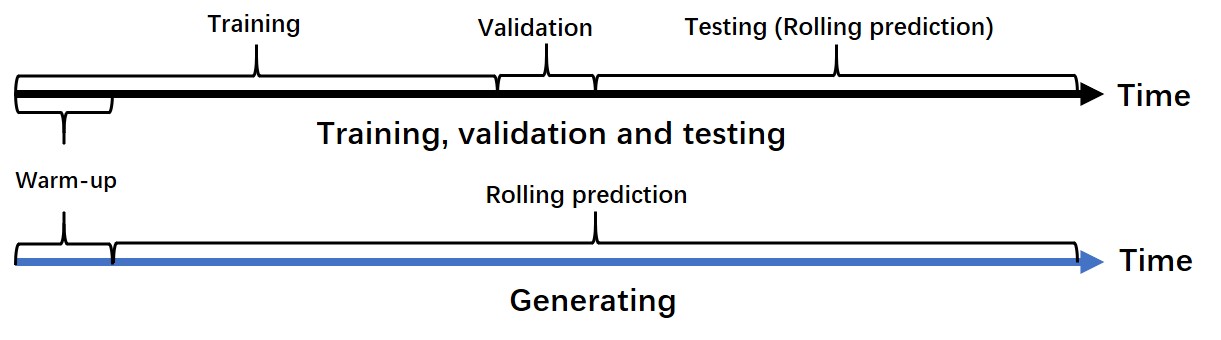}}
  \caption{(a) The flowchart of RC-NF. In the training phase, the RC readout matrix $W_{out}$ is trained according to multiple trajectory data and Tikhonov transformation, and the hyperparameters of RC fixed structure are searched by BO. NF is used to learn the probability density of the single-step prediction error $\tilde{\bm{\varepsilon}}$. Rolling predictions $\tilde{\bm{X}}_s$ are obtained by plugging the compensated error $\tilde{\bm{\varepsilon}}$ into the RC single-step prediction $\hat{\bm{X}}_s$. Without loss of generality, we assume $t \leq s$ in (a). (b) Time allocation for tasks of testing and generating new trajectories. Both require a short warm-up phase.}
  \label{fig: framework} 
\end{figure}

\RestyleAlgo{ruled}
\begin{algorithm}[!ht]
\caption{Reservoir Computing with Normalizing Flow}\label{alg: rc-nf}
\KwIn{trajectories $\bm{X}_t^{(m)}$, $m = 1,\dots,M$, $t = 0,\dots,T-1$;}
\KwOut{RC readout matrix $W_{out}$, probability density of the single-step error $\bm{p}(\tilde{\bm{\varepsilon}})$;}
\textbf{Initialization}:\\
hyperparameters selected by BO: the spectral radius $\rho$ and average degree $\langle k \rangle$ of adjacency matrix $A$, the sampling range $\chi$ of elements of $W_{in}$, leakage factor $\alpha$, regularization parameter $\lambda$\;
random variable $u \sim N(\bm{0},\bm{I})$\;

\textbf{Construct}:\Comment{Reservoir Computing}\\
matrices $A$, $W_{in}$, vector $\bm{\zeta}$\;
\For{$m = 1:M$}{
\For{$t = 1:T-1$}{
calculate and record reservoir states $\bm{r}_t^{(m)}$\;
}
}
\textbf{Construct}: \\
states matrix $\bm{R} = [\dots, [1; \bm{X}^{(m)}_{t-1}; \bm{r}^{(m)}_t]^T, \dots]$, $m = 1,\dots,M$, $t = 1,\dots,T-1$\;
observation matrix $\bm{Y} = [\dots, \bm{X}^{(m)}_{t}, \dots]$, $m = 1,\dots,M$, $t = 1,\dots,T-1$\;  
readout matrix $W_{out} = \bm{Y} \bm{R}^T (\bm{R} \bm{R}^T + \lambda \bm{I})^{-1}$\;
\For(\Comment{Collection of single-step error samples}){$m = 1:M$}{
\For{$t = 1:T-1$}{
calculate the one-step prediction $\hat{\bm{X}}_{t}^{(m)} = W_{out} [1; \bm{X}_{t-1}^{(m)}; \bm{r}_{t}^{(m)}]^T$\;
calculate the one-step error sample $\tilde{\bm{\varepsilon}}^{(m)}_t = \hat{\bm{X}}_{t}^{(m)} - \bm{X}_{t}^{(m)}$ \;
}
}
\While(\Comment{Normalizing Flow}){$i \leq \text{iterations}$}{
calculate the loss function $\mathcal{L}_{NF}$\;
updata the parameters of function $\bm{h}_\theta$\;
}
\textbf{Obtain}:\\
probability density of the single-step error $\bm{p}(\tilde{\bm{\varepsilon}}) = \bm{p}(\bm{h}^{-1}_\theta(\bm{u}))$\;
\KwResult{Single-step prediction $\tilde{\bm{X}}_{s}^{(m)}:= \hat{\bm{X}}_{s}^{(m)} + \tilde{\bm{\varepsilon}}^{(m)}$ for $s\geq T$, where the sample $\tilde{\bm{\varepsilon}}^{(m)}$ is sampled from the probability density function $\bm{p}(\tilde{\bm{\varepsilon}})$.}
\end{algorithm}

\subsection{Universality of RC-NF} \label{sec: convergence}

In this section, we will analyze the universality of RC-NF mentioned in Sec. \ref{sec: rc-nf}. For $d$-dimensional SDEs (\ref{eq: sde}) or SDDEs (\ref{eq: sdde}), assume the drift and diffusion coefficients satisfy the Lipschitz and growth conditions so that solutions of them exist and are unique \cite{duan2015introduction, weinan2021applied,rihan2021delay}.

\noindent$\bullet$ \textbf{Convergence of numerical scheme}

When dealing with real or simulation data, we can obtain trajectory data through discrete-time observations or through numerical simulations of SDE (\ref{eq: sde}) or SDDE (\ref{eq: sdde}). Let \{$\bm{X}_t^{\delta t}$\} be discrete-time observations or the numerical solution of SDE (\ref{eq: sde}) or SDDE (\ref{eq: sdde}) with (maximal) time step size $\delta t$ in this section. \{$\bm{X}_t^{\delta t}$\} and \{$\bm{X}_t$\} are not distinguished in the other sections. 

The most commonly used numerical scheme for SDE (\ref{eq: sde}) is the Euler-Maruyama scheme:
\begin{equation} \label{euler-maruyama}
    \bm{X}_{t+\delta t} = \bm{X}_t + \bm{f}(\bm{X}_t) \delta t + \bm{g}(\bm{X}_t) \delta \bm{B}_t,
\end{equation}
where $\delta \bm{B}_t \sim N(\bm{0}, \delta t \bm{I})$. Then the Euler-Maruyama scheme has the following convergence result.

\begin{lemma} \label{lemma: strong convergence} (Weinan E \cite{weinan2021applied}, Proposition 7.22) The Euler-Maruyama scheme is of strong order 1/2. More specifically, 
\begin{equation} 
    \max_{0 \leq t \leq T} \mathbb{E}| \bm{X}_t^{\delta t} - \bm{X_t}|^2 \leq C (\delta t),
\end{equation}
where $C$ is a constant independent of $\delta t$.
\end{lemma}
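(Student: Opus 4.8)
The plan is to follow the classical strong-convergence argument for Euler--Maruyama, based on a continuous-time interpolant of the discrete scheme together with a Gronwall estimate for the second moment of the error. First I would introduce the step function $\eta(s) := t_n$ for $s \in [t_n, t_{n+1})$ (the grid point immediately preceding $s$, with $t_n = n\,\delta t$) and the continuous interpolant
\begin{equation*}
\bar{\bm{X}}_t = \bm{X}_0 + \int_0^t \bm{f}(\bm{X}^{\delta t}_{\eta(s)})\,ds + \int_0^t \bm{g}(\bm{X}^{\delta t}_{\eta(s)})\,d\bm{B}_s ,
\end{equation*}
which agrees with the discrete iterates $\bm{X}^{\delta t}_{t_n}$ at the grid points; hence it suffices to bound $\sup_{0\le t\le T}\mathbb{E}\,|\bm{X}_t - \bar{\bm{X}}_t|^2$. (In the setting of this paper $\bm{g}$ is a constant diagonal matrix, so the stochastic integrals for $\bm{X}_t$ and $\bar{\bm{X}}_t$ cancel exactly and only the drift contributes to the error; I will nonetheless keep the general $\bm{g}(\cdot)$ so the argument covers the scheme as written.)

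The second step is two a priori estimates. Using the linear-growth condition on $\bm{f},\bm{g}$ together with the It\^o isometry and a discrete Gronwall inequality, one obtains the moment bound $\sup_{0\le t\le T}\mathbb{E}\,|\bar{\bm{X}}_t|^2 \le C_T$, and then the one-step bound
\begin{equation*}
\mathbb{E}\,|\bar{\bm{X}}_s - \bar{\bm{X}}_{\eta(s)}|^2 \le C\,\delta t , \qquad 0 \le s \le T ,
\end{equation*}
where the factor $\delta t$ on the right arises precisely because the Brownian increment $\bm{B}_s - \bm{B}_{\eta(s)}$ has variance $O(\delta t)$ --- this is the mechanism by which ``order $1/2$'' in the $L^2$-norm becomes ``order $1$'' in the squared norm. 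Third, I would write the error as $\bm{e}_t := \bm{X}_t - \bar{\bm{X}}_t = \int_0^t [\bm{f}(\bm{X}_s) - \bm{f}(\bm{X}^{\delta t}_{\eta(s)})]\,ds + \int_0^t [\bm{g}(\bm{X}_s) - \bm{g}(\bm{X}^{\delta t}_{\eta(s)})]\,d\bm{B}_s$, apply Cauchy--Schwarz to the drift integral and the It\^o isometry to the diffusion integral, and use the triangle inequality $|\bm{X}_s - \bm{X}^{\delta t}_{\eta(s)}| = |\bm{X}_s - \bar{\bm{X}}_{\eta(s)}| \le |\bm{e}_s| + |\bar{\bm{X}}_s - \bar{\bm{X}}_{\eta(s)}|$. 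Invoking the Lipschitz condition on $\bm{f},\bm{g}$ and the one-step bound of step two yields
\begin{equation*}
\mathbb{E}\,|\bm{e}_t|^2 \le C_1 \int_0^t \mathbb{E}\,|\bm{e}_s|^2\,ds + C_2\,\delta t ,
\end{equation*}
with $C_1, C_2$ depending only on $T$, the Lipschitz/growth constants and the dimension $d$, but \emph{not} on $\delta t$. Gronwall's inequality then gives $\mathbb{E}\,|\bm{e}_t|^2 \le C_2 e^{C_1 T}\delta t$ for all $t\le T$, hence $\sup_{0\le t\le T}\mathbb{E}\,|\bm{e}_t|^2 \le C\,\delta t$; restricting to grid points $t = t_n$ recovers the stated estimate.

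I expect the main obstacle to be bookkeeping rather than any genuine difficulty: one must check that every constant produced along the way --- in the a priori moment bound, in the one-step estimate, and in the repeated use of $(a+b)^2 \le 2a^2 + 2b^2$ and $(\int_0^t h\,ds)^2 \le t\int_0^t h^2\,ds$ --- is truly independent of $\delta t$, so that the final Gronwall step closes with a clean linear-in-$\delta t$ bound over the fixed horizon $[0,T]$. The only mildly technical ingredient is the a priori moment control of the interpolant; one can sidestep it by first proving $\sup_n \mathbb{E}\,|\bm{X}^{\delta t}_{t_n}|^2 \le C_T$ directly by induction on the discrete recursion. Since the statement is exactly Proposition 7.22 of \cite{weinan2021applied}, in practice I would simply cite it, and the sketch above is essentially the route taken there.
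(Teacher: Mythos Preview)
Your sketch is correct and follows the standard strong-convergence argument for Euler--Maruyama (continuous interpolant, one-step moment bound of order $\delta t$, Lipschitz estimate on the error, Gronwall). The paper itself does not prove this lemma at all: it is stated purely as a citation of Proposition~7.22 in \cite{weinan2021applied}, with no argument given, so your closing remark that one would ``simply cite it'' is exactly what the paper does.
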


The numerical scheme and convergence of SDDEs (\ref{eq: sdde}) can be found in \cite{rihan2021delay}. These convergence results mean that as the time step $\delta t \rightarrow 0$, the numerical solutions converge to the corresponding solutions of SDE (\ref{eq: sde}) or SDDE (\ref{eq: sdde}).

\noindent$\bullet$ \textbf{Universality of Reservoir Computing}

We introduce some notations and background knowledge on filters and reservoir systems, from discrete-time setup. For more details, see \cite{Boyd1985, Lukas2020, GONON202110}. Let $\mathbb{Z} = \{\dots,-1,0,1,\dots\}$ and $\mathbb{Z}_{-} = \{\dots,-1,0\}$. The sets $(\mathbb{R}^d)^\mathbb{Z}$ and $(\mathbb{R}^d)^{\mathbb{Z}_{-}}$ are composed of infinite $\mathbb{R}^d$-valued sequences of the type $(\dots, \bm{z_{-1}}, \bm{z_{0}}, \bm{z_{1}}, \dots)$ and $(\dots, \bm{z_{-1}}, \bm{z_{0}})$. Denote the space of real $n \times m$ matrices by $\mathbb{M}_{n,m}$, $\mathbb{M}_{n}$ for the space of $n$-dimensional square matrices.

A filter is a map $U: (\mathbb{R}^d)^\mathbb{Z} \rightarrow \mathbb{R}^\mathbb{Z}$. If for any $\bm{z}, \bm{z}' \in (\mathbb{R}^d)^\mathbb{Z}$ which satisfy $\bm{z}_s = \bm{z}'_s$ for all $s \leq t$ for a given $t \in \mathbb{Z}$, one has that $U(\bm{z})_t = U(\bm{z}')_t$, then the filter $U$ is called \textbf{causal}. Define the time delay operator $T_{-s}: (\mathbb{R}^d)^\mathbb{Z} \rightarrow (\mathbb{R}^d)^\mathbb{Z}$ by $T_{-s}(\bm{z})_t := \bm{z}_{t+s}$ for any $s \in \mathbb{Z}$. A \textbf{time-invariant} filter $U$ refers that for all $s \in \mathbb{Z}$, $T_{-s} \circ U = U \circ T_{-s}$.

We refer to a map $H: (\mathbb{R}^d)^{\mathbb{Z}_{-}} \rightarrow \mathbb{R}$ as a functional. Causal and time-invariant filters can be equivalently described through the use of their naturally associated functionals. Given a causal and time-invariant $U$, its associated functional is defined by $H_U (\bm{z}) := U(\bm{z}^e)_0$, where $\bm{z}^e$ is an arbitrary extension of $\bm{z} \in (\mathbb{R}^d)^{\mathbb{Z}_{-}}$ to $(\mathbb{R}^d)^{\mathbb{Z}}$. Conversely, one may define a causal and time-invariant filter $U_H:(\mathbb{R}^d)^{\mathbb{Z}} \rightarrow \mathbb{R}^{\mathbb{Z}}$ through a given functional $H$ by setting $U_H(\bm{z})_t := H(\pi_{\mathbb{Z}_-} \circ T_{-t}(\bm{z}))$ where $\bm{z} \in (\mathbb{R}^d)^{\mathbb{Z}}$, $\pi_{\mathbb{Z}_-}: (\mathbb{R}^d)^{\mathbb{Z}} \rightarrow (\mathbb{R}^d)^{\mathbb{Z}_{-}}$ is the natural projection. The above maps $U$, $T_s$, $H$, and $\pi_{\mathbb{Z}_-}$ can be naturally extended to the measure space $((\mathbb{R}^d)^{\mathbb{Z}}, \otimes_{t \in \mathbb{Z}} \mathcal{B}(\mathbb{R}^d))$ (a general notation with no specific correspondence). We use the capital bold letter $\bm{Z}$ to indicate a sequence of a $d$-dimensional discrete stochastic process.

The echo state network (ESN) \cite{Jaeger2004} can be formulated as:
\begin{equation} \label{eq: esn}
    \begin{cases}
    \bm{r}_t=\sigma(A\bm{r}_{t-1}+W_{in}\bm{z}_t+\bm{\zeta}),\\ y_t=\bm{w}^{T}\bm{r}_t,
    \end{cases}
\end{equation}
where $(\bm{z}_{t})_{t \in \mathbb{Z}}$, $(y_{t})_{t \in \mathbb{Z}}$ represent $d$-dimensional input and one-dimensional output discrete time sequences respectively. The symbol $A$, $\bm{W_{in}}$, $\bm{\zeta}$, $\bm{r}_t$ are consistent with definitions in Sec. \ref{sec: rc}. The map $\sigma: \mathbb{R} \rightarrow \mathbb{R}$ is an activation function that is applied element-wise to the vector. The vector $\bm{w}$ is a readout matrix. A filter obtained by ESN (\ref{eq: esn}) satisfies the so-called echo state property (ESP) given by the following statement \cite{yildiz2012re, Lukas2020}: for any $\bm{z} \in (\mathbb{R}^d)^\mathbb{Z}$, there exists a unique $\bm{r} \in (\mathbb{R}^N)^\mathbb{Z}$ such that the first formula of (\ref{eq: esn}) holds. 

\begin{remark} \label{remark: esn and rc}
    There are slight differences between the ESN and RC mentioned in Sec. \ref{sec: rc}. Actually, the update rule of reservoir state $\bm{r}_t$ (\ref{eq: rc update}) contains an additional item $(1 - \alpha) \bm{r}_t$ that explicitly correlates $\bm{r}_t$ and $\bm{r}_{t+1}$ to represent memory and  use $\alpha$ to control the amplitude. Additionally, the additional $1+d$ dimensions of the row of $W_{out}$ (\ref{eq: Tikhonov}) are used to explicitly correlate inputs and outputs as well as to include a bias term. These are all to enhance the robustness of the model when used in practice. Appendix \ref{appendix: ESN} explores an experiment on how ESN and RC differ, and the performance of RC is slightly better than that of ESN.
\end{remark}

Based on the theorem in \cite{Lukas2020} (Lemma \ref{lemma: ortega} in Appendix \ref{appendix: convergence}), we show that if the time delay operator is bounded, then filters of the form (\ref{eq: esn}) are identical in the $L^p$-sense when the input sequences can be represented by each other using time delay operators.

\begin{theorem} \label{th: U lp uniqueness}
Let $U$ be a causal and time-invariant filter and its associated functional is $H$. Fix $p \in [1, \infty)$, let $\bm{Z}$ be a fixed $\mathbb{R}^d$-valued input process, and $H(\pi_{\mathbb{Z}_-}(\bm{Z})) \in L^p(\Omega, \mathcal{F}, \mathbb{P})$. Assume the time delay operator $T_{-s}$ is a bounded operator for any $s \in \mathbb{Z}$. Then there exists causal and time-invariant filters $U'$, $U^{s'}$ satisfying (\ref{eq: esn}), constructed by input sequences $\pi_{\mathbb{Z}_-}(\bm{Z})$, $\pi_{\mathbb{Z}_-} \circ T_{-s} (\bm{Z})$, respectively, that are identical in the $L^p$-sense. That is, for every $\varepsilon > 0$, these causal and time-invariant filters $U'$, $U^{s'}$ satisfy $\| U'(\bm{Z})_s - U^{s'}(\bm{Z})_s \|_p < (\| T_{-s}\|_p + 1)\varepsilon$.
\end{theorem}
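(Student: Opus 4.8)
The plan is to reduce the statement to the echo-state-network universality result of \cite{Lukas2020}, quoted as Lemma~\ref{lemma: ortega} in Appendix~\ref{appendix: convergence}: for any causal, time-invariant filter whose associated functional is $L^p$-integrable against a prescribed input process, there is an arbitrarily accurate $L^p$-approximant that is itself a causal, time-invariant filter of the echo-state form (\ref{eq: esn}). I would invoke this result twice --- once for each of the two input sequences $\pi_{\mathbb{Z}_-}(\bm{Z})$ and $\pi_{\mathbb{Z}_-}\circ T_{-s}(\bm{Z})$ appearing in the statement --- and then glue the two approximations with the triangle inequality, moving the shift $T_{-s}$ between filter and input by time-invariance.

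\textbf{Applying universality to both inputs.} Since $H(\pi_{\mathbb{Z}_-}(\bm{Z})) \in L^p(\Omega,\mathcal{F},\PX)$ by hypothesis, Lemma~\ref{lemma: ortega} applied to $U$ with input $\pi_{\mathbb{Z}_-}(\bm{Z})$ yields a causal, time-invariant ESN filter $U'$ of the form (\ref{eq: esn}), with associated functional $H'$, satisfying $\|U(\bm{Z})_0 - U'(\bm{Z})_0\|_p = \|H(\pi_{\mathbb{Z}_-}(\bm{Z})) - H'(\pi_{\mathbb{Z}_-}(\bm{Z}))\|_p < \varepsilon$ (the first equality being causality of $U$, $U'$). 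To run the same construction with the shifted input $\pi_{\mathbb{Z}_-}\circ T_{-s}(\bm{Z})$ I first check the lemma's hypothesis, $H(\pi_{\mathbb{Z}_-}\circ T_{-s}(\bm{Z})) \in L^p$ --- this is exactly where boundedness of $T_{-s}$ enters, keeping the shifted process admissible and bounding its relevant $L^p$ norm by $\|T_{-s}\|_p$ times that of $\bm{Z}$. The lemma then provides a causal, time-invariant ESN filter $U^{s'}$ with functional $H^{s'}$ and $\|H(\pi_{\mathbb{Z}_-}\circ T_{-s}(\bm{Z})) - H^{s'}(\pi_{\mathbb{Z}_-}\circ T_{-s}(\bm{Z}))\|_p < \varepsilon$; using the identity $H_V(\pi_{\mathbb{Z}_-}\circ T_{-s}(\bm{Z})) = V(\bm{Z})_s$, valid for every causal, time-invariant filter $V$ (recalled just before the theorem), this reads $\|U(\bm{Z})_s - U^{s'}(\bm{Z})_s\|_p < \varepsilon$.

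\textbf{Combining.} By the triangle inequality, $\|U'(\bm{Z})_s - U^{s'}(\bm{Z})_s\|_p \le \|U'(\bm{Z})_s - U(\bm{Z})_s\|_p + \|U(\bm{Z})_s - U^{s'}(\bm{Z})_s\|_p$, and the second summand is $< \varepsilon$ by the previous paragraph. For the first summand, time-invariance of $U'$ and $U$ rewrites $\|U'(\bm{Z})_s - U(\bm{Z})_s\|_p$ as $\|H'(\pi_{\mathbb{Z}_-}\circ T_{-s}(\bm{Z})) - H(\pi_{\mathbb{Z}_-}\circ T_{-s}(\bm{Z}))\|_p$, i.e. the approximation error of $U'$ transported to the shifted input; boundedness of $T_{-s}$ then bounds this by $\|T_{-s}\|_p$ times the error on the unshifted input already controlled above, hence by $\|T_{-s}\|_p\,\varepsilon$. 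Adding the two estimates gives the claimed $(\|T_{-s}\|_p + 1)\varepsilon$, and $U'$, $U^{s'}$ inherit causality and time-invariance directly from Lemma~\ref{lemma: ortega}.

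The step I expect to be the main obstacle is the first summand in the combination: Lemma~\ref{lemma: ortega} controls the error of $U'$ only on the input process it was built from, so I must argue this error survives composition with $T_{-s}$ at the cost of a single factor $\|T_{-s}\|_p$. Depending on the precise form in which the quoted lemma is stated, this is achieved either by using a version uniform over a ball of admissible inputs --- noting $T_{-s}(\bm{Z})$ lies in a ball whose radius is scaled by $\|T_{-s}\|_p$ --- or by applying boundedness of $T_{-s}$ directly to the difference filter $U'-U$; making the constant land exactly on $\|T_{-s}\|_p$ rather than a weaker power or a looser factor is the delicate bookkeeping. Everything else --- the shift identities and the causality/time-invariance claims --- is immediate from the definitions set up in this section.
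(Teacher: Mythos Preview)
Your proposal is correct and follows essentially the same route as the paper: two invocations of Lemma~\ref{lemma: ortega} (one for each input), a triangle-inequality split through $U$, and then time-invariance plus the operator bound $\|T_{-s}\|_p$ to control the transported error of $U'$ on the shifted input. The step you flag as the main obstacle is resolved in the paper exactly by your second option---applying boundedness of $T_{-s}$ directly to the difference $U'-U$ after commuting $T_{-s}$ through the filters via time-invariance---so your bookkeeping concern about landing on the exact constant $\|T_{-s}\|_p$ is already settled by that route.
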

The proof of this theorem is shown in Appendix \ref{appendix: convergence}.

For the $d$-dimensional stochastic process $(\bm{X}_t)_{t \in [0,T]}$ defined by SDE (\ref{eq: sde}) or SDDE (\ref{eq: sdde}), a sufficient condition for $\bm{X}_t \in L^2(\Omega, \mathcal{F}, \mathbb{P})$ is that the drift and diffusion coefficients satisfy the Lipschitz and growth conditions. We can construct countable infinite discrete sequences with respect to the process $\bm{X}$ for $t \in [0, T]$, denote as $(\bm{X}_t)_{t \in  \mathbb{Z}}:=(\dots, \bm{X}_{t-1}, \bm{X}_{t}, \bm{X}_{t+1}, \dots)$. The counterpart based on the Euler-Maruyama scheme is denoted as $(\bm{X}_t^{\delta t})_{t \in  \mathbb{Z}}:=(\dots, \bm{X}^{\delta t}_{t-1}, \bm{X}^{\delta t}_{t}, \bm{X}^{\delta t}_{t+1}, \dots)$. Lemma \ref{lemma: strong convergence} results in $\bm{X}_t^{\delta t} \in L^2(\Omega, \mathcal{F}, \mathbb{P})$ for all $t \in  \mathbb{Z}$. We generalize the conclusions of Lemma \ref{lemma: ortega} and Theorem \ref{th: U lp uniqueness} to discrete time series $(\bm{X}_t^{\delta t})_{t \in  \mathbb{Z}}$ based on the numerical scheme.


\begin{theorem} \label{th: main}
Define a filter $\bm{U}: ((\mathbb{R}^d)^{\mathbb{Z}}, \otimes_{t \in \mathbb{Z}} \mathcal{B}(\mathbb{R}^d)) \rightarrow ((\mathbb{R}^d)^{\mathbb{Z}}, \otimes_{t \in \mathbb{Z}} \mathcal{B}(\mathbb{R}^d))$ by $\bm{U}((\bm{X}_t^{\delta t})_{t \in  \mathbb{Z}})_t = \bm{X}_{t+1}^{\delta t}$. 
Let $(\bm{X}_t^{\delta t})^{(1)}:=(\dots, \bm{X}^{\delta t}_{t-1}, \bm{X}^{\delta t}_{t})$ be a fixed $\mathbb{R}^d$-valued input process. 
Then for arbitrary $\varepsilon > 0$, there exists $N \in \mathbb{N}$, $W_{in} \in \mathbb{M}_{N,d}$, $\bm{\zeta} \in \mathbb{R}^d$, $A \in \mathbb{M}_N$, $\bm{w} \in \mathbb{R}^N \times \mathbb{R}^d$ such that (\ref{eq: esn}) has the ESP, the corresponding filter is causal and time-invariant, the associated functional satisfies $\bm{H}^{A, W_{in}, \bm{\zeta}}_{\bm{w}}((\bm{X}_t^{\delta t})^{(1)}) \in L^2(\Omega, \mathcal{F}, \mathbb{P})$ and 
\begin{equation} \label{functional Lp x}
    \| \bm{H}((\bm{X}_t^{\delta t})^{(1)}) - \bm{H}^{A, W_{in}, \bm{\zeta}}_{\bm{w}}((\bm{X}_t^{\delta t})^{(1)}) \|_2 < \varepsilon.
\end{equation}
Additionally, for every $s \in \mathbb{Z}$, consider input sequences $(\bm{X}_t^{\delta t})^{(1)}$ and $T_{-s}((\bm{X}_t^{\delta t})^{(1)})$, there exists causal and time-invariant filters $U'$, $U^{s'}$ satisfying (\ref{eq: esn}), constructed by input sequences $(\bm{X}_t^{\delta t})^{(1)}$, $T_{-s}((\bm{X}_t^{\delta t})^{(1)})$, that are identical in the $L^2$-sense.
\end{theorem}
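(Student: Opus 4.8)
The plan is to split the statement into its two assertions and reduce each to a result already available: the $L^2$ approximation estimate (\ref{functional Lp x}) to the stochastic-input universality of echo state networks (Lemma \ref{lemma: ortega} in Appendix \ref{appendix: convergence}), and the ``identical filters'' statement for time-shifted inputs to Theorem \ref{th: U lp uniqueness}. The common preliminary step is to record that all processes in play are square-integrable: under the Lipschitz and growth assumptions the exact solution of (\ref{eq: sde}) or (\ref{eq: sdde}) satisfies $\sup_{0\le t\le T}\mathbb{E}|\bm{X}_t|^2<\infty$, and then Lemma \ref{lemma: strong convergence} together with the triangle inequality gives $\sup_{0\le t\le T}\mathbb{E}|\bm{X}_t^{\delta t}|^2<\infty$, so that $\bm{X}_t^{\delta t}\in L^2(\Omega,\mathcal{F},\mathbb{P})$ for every relevant $t$; in particular the functional value $\bm{H}((\bm{X}_t^{\delta t})^{(1)})=\bm{X}_{1}^{\delta t}$ lies in $L^2$. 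The same uniform bound on the finite horizon is what makes the time-delay operators $T_{-s}$ act boundedly on the sequence space carrying these processes, which is the hypothesis Theorem \ref{th: U lp uniqueness} requires.

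For the first assertion I would first verify that $\bm{U}$, defined by $\bm{U}((\bm{X}_t^{\delta t})_{t\in\mathbb{Z}})_t=\bm{X}_{t+1}^{\delta t}$, is time-invariant and causal in the stochastic-input sense: time-invariance is immediate because the one-step map commutes with every shift $T_{-s}$, and causality holds because for the Euler--Maruyama process $\bm{X}_{t+1}^{\delta t}$ is obtained from $(\dots,\bm{X}_{t-1}^{\delta t},\bm{X}_t^{\delta t})$ by a measurable rule together with an independent innovation, i.e. the associated functional $\bm{H}$ is a well-defined random functional of the past. Then I would apply Lemma \ref{lemma: ortega} to this functional and the input process $(\bm{X}_t^{\delta t})^{(1)}$, coordinatewise in the $d$ output components — which is exactly why the read-out is taken as $\bm{w}\in\mathbb{R}^N\times\mathbb{R}^d$ — to produce $N\in\mathbb{N}$, $W_{in}\in\mathbb{M}_{N,d}$, $\bm{\zeta}\in\mathbb{R}^N$, $A\in\mathbb{M}_N$ and $\bm{w}$ for which (\ref{eq: esn}) has the echo state property, its induced filter is causal and time-invariant, the induced functional is in $L^2$, and (\ref{functional Lp x}) holds for the prescribed $\varepsilon$.

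For the second assertion I would simply specialize Theorem \ref{th: U lp uniqueness} to $p=2$ with the bi-infinite extension $\bm{Z}=(\bm{X}_t^{\delta t})_{t\in\mathbb{Z}}$ as input process: the hypotheses $H(\pi_{\mathbb{Z}_-}(\bm{Z}))\in L^2$ and boundedness of $T_{-s}$ were both checked in the preliminary step, so for each $s\in\mathbb{Z}$ there exist causal, time-invariant filters $U'$ and $U^{s'}$ of the form (\ref{eq: esn}), built respectively from $(\bm{X}_t^{\delta t})^{(1)}$ and $T_{-s}\big((\bm{X}_t^{\delta t})^{(1)}\big)$, satisfying $\|U'(\bm{Z})_s-U^{s'}(\bm{Z})_s\|_2<(\|T_{-s}\|_2+1)\varepsilon$, i.e. identical in the $L^2$-sense. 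Combining this with the first assertion yields one echo state network realization that both approximates the one-step-ahead functional and is insensitive to time shifts of the input, which is the statement of the theorem.

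The step I expect to be the genuine obstacle is the causality/measurability bookkeeping for $\bm{U}$: since $\bm{X}_{t+1}^{\delta t}$ genuinely depends on the fresh Brownian increment $\delta\bm{B}_t$ and not only on the observed past, $\bm{U}$ is not a deterministic filter, so one must phrase it either as a random functional on $((\mathbb{R}^d)^{\mathbb{Z}},\otimes_{t\in\mathbb{Z}}\mathcal{B}(\mathbb{R}^d))$ or by augmenting the input with the driving noise so that the hypotheses of Lemma \ref{lemma: ortega} apply verbatim; this is also precisely where the RC-NF decomposition (reservoir computing capturing the drift-type causal part, the normalizing flow capturing the law of the residual increment) needs to be made rigorous. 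Everything else — the $L^2$ bounds via Lemma \ref{lemma: strong convergence}, time-invariance, boundedness of $T_{-s}$ on the finite horizon, and the coordinatewise reduction to the scalar-output universality lemma — is routine.
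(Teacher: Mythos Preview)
Your approach matches the paper's exactly: reduce to the scalar case via Remark \ref{remark: d-dimen fliter}, observe that the associated functional value $\bm{H}((\bm{X}_t^{\delta t})^{(1)})=\bm{X}_{t+1}^{\delta t}\in L^2$, invoke Lemma \ref{lemma: ortega} for (\ref{functional Lp x}), then note $T_{-s}(\bm{X}_t^{\delta t})=\bm{X}_{t+s}^{\delta t}\in L^2$ so $T_{-s}$ is bounded and apply Theorem \ref{th: U lp uniqueness}. The causality/measurability concern you flag---that $\bm{X}_{t+1}^{\delta t}$ depends on the fresh increment $\delta\bm{B}_t$ and is therefore not a deterministic functional of the past---is legitimate, but the paper's own proof does not address it either; it simply asserts $H((X_t^{\delta t})^{(1)})=X_{t+1}^{\delta t}$ and moves on, so your proposal is at least as complete as the original argument.
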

Appendix \ref{appendix: convergence} provides the proof for this theorem.

\begin{remark} \label{remark: training convergence}
As revealed in Theorem \ref{th: U lp uniqueness} and Theorem \ref{th: main}, for every $\varepsilon > 0$, we have $\| \bm{U}'((\bm{X}_t^{\delta t})_{t \in  \mathbb{Z}}))_s - \bm{U}^{s'}((\bm{X}_t^{\delta t})_{t \in  \mathbb{Z}}))_s \|_2 < \varepsilon$ for arbitrary $s \in \mathbb{Z}$, where $\bm{U}'$ and $\bm{U}^{s'}$ are filters of the form (\ref{eq: esn}) constructed by Lemma \ref{lemma: ortega} and the corresponding input sequences can be represented by time delay operators to each other. We use the notation $\hat{\bm{X}}_t$ in Sec. \ref{sec: rc} to indicate the output of ESN. The above conclusion means that for a semi-infinite input sequence of the form $(\bm{X}_t^{\delta t})^{(1)}$, $\| \hat{\bm{X}}_{t+1} - \bm{X}_{t+1}^{\delta t}\|_2 = \| \bm{U}'((\bm{X}_t^{\delta t})^{(1)})_0 - \bm{U}((\bm{X}_t^{\delta t})^{(1)})_0 \|_2 =\| \bm{H}'((\bm{X}_t^{\delta t})^{(1)}) - \bm{H}((\bm{X}_t^{\delta t})^{(1)})  \|_2 < \varepsilon$, for all $t \in \mathbb{Z}$. Furthermore, this shows that a filter of the form (\ref{eq: esn}) can approximate the numerical solution \{$\bm{X}_t^{\delta t}$\} in the $L^2$-sense, which is denoted as \{$\hat{\bm{X}}_t$\}. Lemma \ref{lemma: strong convergence} states that the numerical solution $\{\bm{X}_t^{\delta t}\}$ converges to solutions of SDEs (\ref{eq: sde}) or SDDEs (\ref{eq: sdde}) when the time step $\delta t \rightarrow 0$. The current theoretical results are derived from discrete time series, and further investigation is required to determine whether the theorems in this section hold for continuous-time stochastic processes in the time domain of $[0, T]$.
\end{remark}


\noindent$\bullet$ \textbf{Universality of Normalizing Flow}

According to Theorem \ref{th: U lp uniqueness} and Theorem \ref{th: main}, Reservoir Computing (RC) can serve as a universal approximator for the stochastic dynamical systems under study. However, the limitation of the inherent structure of RC, finite-time data, and the limited amount of sample trajectories, lead to approximation errors and generalization errors. It is worth noting that during the testing phase (prediction phase), there is no data available to provide us with information about the system. Specifically, at each step of the training phase, the input $\bm{X}_t^{\delta t}$ is from SDE (\ref{eq: sde}) or SDDE (\ref{eq: sdde}), with information on both drift and diffusion coefficients. In the testing phase, revisiting formulae (\ref{eq: rc update}), (\ref{eq: rc readout}) for RC, or (\ref{eq: esn}) for ESN, with the input $\bm{X}_t^{\delta t}$ replaced by the predicted output $\bm{\hat{X}}_t$ of the preceding step, the evolution laws of reservoir states $\bm{r}_t$ are polluted by prediction errors. Rolling predictions, on the other hand, necessarily lead to a rapid accumulation of errors. According to these observations, the RC model fails as the discrepancy between the stochastic process predicted by RC and the target stochastic process widens over time.

If we can guarantee that the first-step prediction is correct (without error, in an ideal situation), then RC is still a reasonable approximator for the second-step prediction. Otherwise, the phenomenon of the accumulation of errors appears, which is significant for long-term predictions. More specifically, our objective is to accurately predict the state  $\bm{X}_{T+k}^{\delta t}$ in the sense of distribution, where $k > 0$ is an integer, that is, to estimate the probability density $\bm{p}(\bm{X}_{T+k}^{\delta t})$ from data. Therefore, the task of NF introduced in Sec. \ref{sec: NF} is to correct the probability density of output $\hat{\bm{X}}_{T+1}$ to $\bm{p}(\bm{X}^{\delta t}_{T+1})$ in single-step prediction.

It should be noticed that the diffusion coefficient matrix $\bm{g}$ is a constant diagonal matrix, namely, the SDE/SDDE with additive noise. Besides, based on the structure of RC (formulae (\ref{eq: rc update}), (\ref{eq: rc readout})) and the observation of numerical experiments presented in Sec. \ref{sec: experiments}, we propose the following assumption. Further exploration of this issue is left for future research.

\begin{assumption} \label{assump: fix error}
If we fix the time step size $\delta t$ between $\bm{X}^{\delta t}_{t+1}$ and $\bm{X}^{\delta t}_{t}$ for all $t \in \mathbb{Z}$, then the single-step prediction error $\tilde{\bm{\varepsilon}} := \bm{X}_{t+1}^{\delta t} - \hat{\bm{X}}_{t+1}$ between the true filter $U$ and the filter $U'$ of the form (\ref{eq: esn}) obtained by Theorem \ref{th: main} has the same distribution at each time $t$, that is, $\bm{p}(\tilde{\bm{\varepsilon}})$ is fixed for all $t \in \mathbb{Z}$.
\end{assumption}

Based on Assumption \ref{assump: fix error}, we can learn the error density function $\bm{p}(\tilde{\bm{\varepsilon}})$ using NF from time-independent error data. For a special class of transformation $\bm{h}_\theta$ (\ref{eq: nf h_theta}), we can conclude the following universality with respect to NF. We start with some definitions. A mapping $\bm{h}_\theta = (h^{(1)}_\theta, \dots, h^{(d)}_\theta): \mathbb{R}^d \rightarrow \mathbb{R}^d$ is called \textbf{triangular} if $h^{i}_\theta$ is a function of $\tilde{\bm{\varepsilon}}^{1:i}$ for each $i = 1,\dots, d$, where $\tilde{\bm{\varepsilon}}^{1:i} = (\tilde{\varepsilon}^1,\dots, \tilde{\varepsilon}^i)^T \in \mathbb{R}^i$. Such a triangular map $\bm{h}_\theta$ is called \textbf{increasing} if $h^{i}_\theta$ is an increasing function of $\varepsilon^i$ for each $i$.

\begin{proposition}
\label{lemma: measure transform} (Kobyzev \cite{Kobyzev2021}, Proposition 4)
If $\mu$ and $\nu$ are absolutely continuous Borel probability measures on $\mathbb{R}^d$, then there exists an increasing triangular transformation $\bm{h}_\theta: \mathbb{R}^d \rightarrow \mathbb{R}^d$, such that $\nu = \mu(\bm{h}_\theta^{-1}):=\bm{h}_{\theta *} \mu$. This transformation is unique up to null sets of $\mu$. A similar result holds for measures on $[0,1]^d$. 
\end{proposition}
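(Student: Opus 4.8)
The plan is to realize $\bm{h}_\theta$ as the Knothe--Rosenblatt rearrangement of $\mu$ onto $\nu$ and to argue by induction on the dimension $d$. For $d=1$ I would take $F_\mu, F_\nu$ to be the cumulative distribution functions of $\mu,\nu$; absolute continuity makes them continuous and strictly increasing on the interiors of their supports, so the generalized inverse $F_\nu^{-1}$ is well defined there and $h_\theta := F_\nu^{-1}\circ F_\mu$ is increasing with $h_{\theta*}\mu=\nu$ (indeed, if $U\sim\mathrm{Unif}[0,1]$ then $F_\mu^{-1}(U)\sim\mu$ and $F_\nu^{-1}(U)\sim\nu$). Any other increasing $h$ with $h_*\mu=\nu$ satisfies $F_\nu(h(x))=F_\mu(x)$ at $\mu$-almost every $x$, which determines $h$ up to a $\mu$-null set, giving uniqueness.

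For the inductive step, assuming the result in dimension $d-1$, I would disintegrate along the first coordinate. Let $\mu_1,\nu_1$ be the first-coordinate marginals and set $h^1_\theta := F_{\nu_1}^{-1}\circ F_{\mu_1}$. Writing $p$ for the Lebesgue density of $\mu$, the conditional law $\mu(\,\cdot\mid x^1)$ of $(X^2,\dots,X^d)$ given $X^1=x^1$ has density $p(x^1,\cdot)/p_1(x^1)$ and is absolutely continuous on $\mathbb{R}^{d-1}$ for $\mu_1$-a.e. $x^1$, and likewise for $\nu$. For each such $x^1$ I would apply the induction hypothesis to the pair $\bigl(\mu(\,\cdot\mid x^1),\ \nu(\,\cdot\mid h^1_\theta(x^1))\bigr)$, obtaining an increasing triangular map $\bm{g}_{x^1}$ on $\mathbb{R}^{d-1}$, and set $h^i_\theta(x^{1:i}) := g^{i-1}_{x^1}(x^{2:i})$ for $i=2,\dots,d$. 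Then $\bm{h}_\theta=(h^1_\theta,\dots,h^d_\theta)$ is triangular and each $h^i_\theta$ is increasing in its last argument by construction.

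Two verifications then remain. For $\bm{h}_{\theta*}\mu=\nu$, I would use the factorization $\mu(dx)=\mu_1(dx^1)\,\mu(dx^{2:d}\mid x^1)$ together with $(h^1_\theta)_*\mu_1=\nu_1$ and $(\bm{g}_{x^1})_*\mu(\,\cdot\mid x^1)=\nu(\,\cdot\mid h^1_\theta(x^1))$, check the pushforward identity on product sets $A_1\times B$, and extend by a monotone-class argument using the disintegration of $\nu$. For uniqueness up to $\mu$-null sets, if $\bm{h},\tilde{\bm{h}}$ are both increasing triangular with $\bm{h}_*\mu=\tilde{\bm{h}}_*\mu=\nu$, then $h^1,\tilde h^1$ both push $\mu_1$ to $\nu_1$ monotonically, hence coincide $\mu_1$-a.e. by the one-dimensional case; since triangularity forces the first $k$ outputs to depend only on the first $k$ inputs, the law $\mu(\,\cdot\mid x^{1:k})$ pushed forward by $x^{k+1}\mapsto h^{k+1}(x^{1:k},x^{k+1})$ equals the corresponding conditional of $\nu$, so induction in $k$ together with one-dimensional uniqueness concludes. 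The $[0,1]^d$ statement is identical, noting that CDF inverses of measures supported on $[0,1]$ take values in $[0,1]$.

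The step I expect to be the main obstacle is arranging that $x^1\mapsto\bm{g}_{x^1}$ --- equivalently, the conditional CDFs $F_{\mu(\,\cdot\mid x^1)}$ and their inverses --- depends measurably on $x^1$, so that $\bm{h}_\theta$ is a bona fide Borel map and the per-$x^1$ applications of the induction hypothesis assemble into one transformation. This is handled by invoking the disintegration theorem (valid since $\mathbb{R}^d$ is a standard Borel space) and the joint Borel measurability of $(x^1,t)\mapsto F_{\mu(\,\cdot\mid x^1)}(t)$; the remaining bookkeeping --- that nested conditional pushforwards recombine into the global pushforward and that the a.e.-defined conditionals cost only $\mu$-null sets --- is routine.
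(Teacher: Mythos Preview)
Your proposal is correct and is precisely the Knothe--Rosenblatt construction that underlies this result. Note, however, that the paper does not supply its own proof of this proposition: it is stated as a quotation of Proposition~4 from Kobyzev~\cite{Kobyzev2021} and used as a black box in the proof of Theorem~\ref{th: d-convergence}. So there is no in-paper argument to compare against; your outline (one-dimensional CDF transport, induction via disintegration along the first coordinate, and the measurability check for the fiberwise maps) is the standard route and would serve as a self-contained proof were one needed here.
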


\begin{proposition}
\label{lemma: measure transform weak} (Kobyzev \cite{Kobyzev2021}, Proposition 5)
If $\mu$ is an absolutely continuous Borel probability measure on $\mathbb{R}^d$ and \{$\bm{h}_{\theta n}$\} is a sequence of maps $\mathbb{R}^d \rightarrow \mathbb{R}^d$ which converges pointwise to a map $\bm{h}_\theta$, then a sequence of measures $(\bm{h}_{\theta n})_* \mu$ weakly converges to $\bm{h}_{\theta *} \mu$.
\end{proposition}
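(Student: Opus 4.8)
The plan is to verify weak convergence directly from its defining property via bounded continuous test functions, reducing the whole statement to the dominated convergence theorem. Recall that $(\bm{h}_{\theta n})_* \mu \Rightarrow \bm{h}_{\theta *}\mu$ means precisely that for every bounded continuous $\varphi : \mathbb{R}^d \to \mathbb{R}$ one has $\int_{\mathbb{R}^d} \varphi \, d\big((\bm{h}_{\theta n})_*\mu\big) \to \int_{\mathbb{R}^d} \varphi \, d\big(\bm{h}_{\theta *}\mu\big)$. So I would fix an arbitrary such $\varphi$ and show this numerical convergence.

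First I would rewrite both integrals using the change-of-variables identity for pushforward measures: $\int_{\mathbb{R}^d} \varphi \, d\big((\bm{h}_{\theta n})_*\mu\big) = \int_{\mathbb{R}^d} (\varphi \circ \bm{h}_{\theta n}) \, d\mu$, and likewise with $\bm{h}_\theta$ in place of $\bm{h}_{\theta n}$. This recasts the goal as $\int_{\mathbb{R}^d} (\varphi \circ \bm{h}_{\theta n}) \, d\mu \to \int_{\mathbb{R}^d} (\varphi \circ \bm{h}_\theta) \, d\mu$. I would then apply the dominated convergence theorem on the probability space $(\mathbb{R}^d, \mathcal{B}(\mathbb{R}^d), \mu)$. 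For pointwise convergence of the integrands: since $\bm{h}_{\theta n}(\bm{x}) \to \bm{h}_\theta(\bm{x})$ for every $\bm{x}$ and $\varphi$ is continuous, $(\varphi \circ \bm{h}_{\theta n})(\bm{x}) \to (\varphi \circ \bm{h}_\theta)(\bm{x})$ for every $\bm{x}$. For domination: boundedness of $\varphi$ gives $|\varphi \circ \bm{h}_{\theta n}| \le \|\varphi\|_\infty$ uniformly in $n$, and the constant $\|\varphi\|_\infty$ is $\mu$-integrable because $\mu$ is a probability (hence finite) measure. Measurability of each $\varphi \circ \bm{h}_{\theta n}$ holds since the flow maps $\bm{h}_{\theta n}$ are Borel measurable (in our normalizing-flow setting they are in fact smooth invertible layers), and $\bm{h}_\theta$ is measurable as a pointwise limit of measurable maps. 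Dominated convergence then yields the desired limit, which is exactly the assertion. The statement on $[0,1]^d$ is identical, with $\mathbb{R}^d$ replaced by the compact cube throughout.

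There is no genuine obstacle here; the only points worth flagging are bookkeeping ones. The absolute continuity hypothesis on $\mu$ plays no role in this direction — it is needed only for Proposition~\ref{lemma: measure transform} — so the argument in fact works for any finite Borel measure; all that is used is finiteness of $\mu$ (so that constants are integrable) and Borel measurability of the $\bm{h}_{\theta n}$. The single subtlety one should mention is that the limit map $\bm{h}_\theta$ is automatically Borel measurable, being the pointwise limit of Borel-measurable functions, so the pushforward $\bm{h}_{\theta *}\mu$ is well defined and the right-hand integral makes sense.
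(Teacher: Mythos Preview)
Your argument is correct: the change-of-variables identity for pushforwards reduces the weak-convergence claim to $\int (\varphi\circ \bm{h}_{\theta n})\,d\mu \to \int (\varphi\circ \bm{h}_{\theta})\,d\mu$, and dominated convergence applies exactly as you describe, using continuity of $\varphi$ for pointwise convergence and boundedness of $\varphi$ together with finiteness of $\mu$ for the dominating constant. Your remarks on measurability of the limit map and on the irrelevance of the absolute-continuity hypothesis are also accurate.

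There is nothing to compare against in the paper itself: this proposition is quoted from Kobyzev~\cite{Kobyzev2021} (Proposition~5) and is stated without proof, serving only as an input to Theorem~\ref{th: d-convergence}. Your dominated-convergence argument is the standard one and would be the expected proof in the cited source.
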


\noindent$\bullet$ \textbf{Error correction in distribution}

Theorem \ref{th: main} states that there exists an RC structure approximating the single-step prediction with $L^2$-norm. However, based on the analysis of the previous subsection, Theorem \ref{th: main} fails to guarantee accurate long-term predictions. Fortunately, the universality of normalizing flow (Proposition \ref{lemma: measure transform} and Proposition \ref{lemma: measure transform weak}) provides a way to estimate the distribution of prediction error. We can compensate for the error of each RC prediction step using NF to effectively mitigate the accumulation of errors over time. We impose that the laws of $\hat{\bm{X}}_{t+1}$ and error $\tilde{\bm{\varepsilon}}$ are absolutely continuous with respect to Borel probability measures in order to ensure the existence of their densities. This assumption is given for NF and may be relaxed when using other generative models. If Assumption \ref{assump: fix error} holds, using Proposition \ref{lemma: measure transform} and Proposition \ref{lemma: measure transform weak}, we summarize these descriptions into the following theorem, namely, constructing an RC-NF framework to approximate the distribution of $\bm{X}_{t+1}^{\delta t}$.

\begin{theorem} \label{th: d-convergence}
Given a random variable $\bm{u} \sim N(\bm{0},\bm{I})$, there exists an increasing triangular transformation $\bm{h}_\theta: \mathbb{R}^d \rightarrow \mathbb{R}^d$ and a sequence of maps \{$\bm{h}_{\theta n}: \mathbb{R}^d \rightarrow \mathbb{R}^d$\} which converges pointwise to the map $\bm{h}_\theta$, such that $\hat{\bm{X}}_{t+1} + \bm{h}_{\theta n}^{-1}(\bm{u})$ convergences to $\bm{X}^{\delta t}_{t+1}$ in distribution.
\end{theorem}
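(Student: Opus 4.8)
The plan is to combine Assumption~\ref{assump: fix error} with the two transport-map facts already recorded, Proposition~\ref{lemma: measure transform} and Proposition~\ref{lemma: measure transform weak}, and then to reattach the fixed reservoir output $\hat{\bm{X}}_{t+1}$ by a characteristic-function argument. Write $\nu$ for the law of the single-step error $\tilde{\bm{\varepsilon}}=\bm{X}^{\delta t}_{t+1}-\hat{\bm{X}}_{t+1}$; by Assumption~\ref{assump: fix error} this law does not depend on $t$, and by the standing hypothesis of this subsection both $\nu$ and the law $\mu=N(\bm{0},\bm{I})$ of the base variable $\bm{u}$ are absolutely continuous Borel probability measures on $\mathbb{R}^d$. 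Applying Proposition~\ref{lemma: measure transform} to $(\mu,\nu)$ yields an increasing triangular map transporting $\mu$ to $\nu$; since the inverse of an increasing triangular map is again increasing triangular and measurable, we may call that map $\bm{h}_\theta^{-1}$, thereby obtaining an increasing triangular $\bm{h}_\theta:\mathbb{R}^d\to\mathbb{R}^d$ for which $\bm{h}_\theta^{-1}(\bm{u})$ has law $\nu$, i.e.\ $\bm{h}_\theta^{-1}(\bm{u})$ and $\tilde{\bm{\varepsilon}}$ coincide in distribution.

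Next I would descend to the finite-depth flows that a normalizing flow actually realizes. Since the RQ-NSF layers of Sec.~\ref{sec: NF} only compose to $\bm{h}_\theta$ in the limit, take a sequence $\{\bm{h}_{\theta n}\}$ converging pointwise to $\bm{h}_\theta$; using the strict monotonicity of the one-dimensional conditionals of an increasing triangular map one arranges, or reads off at continuity points, that $\bm{h}_{\theta n}^{-1}\to\bm{h}_\theta^{-1}$ pointwise as well. Proposition~\ref{lemma: measure transform weak}, applied to $\mu$ and the sequence $\{\bm{h}_{\theta n}^{-1}\}$, then gives that $(\bm{h}_{\theta n}^{-1})_*\mu$ converges weakly to $(\bm{h}_\theta^{-1})_*\mu=\nu$, i.e.\ $\bm{h}_{\theta n}^{-1}(\bm{u})\to\tilde{\bm{\varepsilon}}$ in distribution.

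Finally I would add $\hat{\bm{X}}_{t+1}$ back on. The correction draws a fresh $\bm{u}$ independent of $\hat{\bm{X}}_{t+1}$, and the additive-noise structure of the Euler--Maruyama update (\ref{euler-maruyama}) makes the true error $\tilde{\bm{\varepsilon}}$ independent of $\hat{\bm{X}}_{t+1}$ with $\bm{X}^{\delta t}_{t+1}=\hat{\bm{X}}_{t+1}+\tilde{\bm{\varepsilon}}$. Using the previous step and the continuity of $\xi\mapsto\varphi_{\hat{\bm{X}}_{t+1}}(\xi)$,
\begin{equation}
\varphi_{\hat{\bm{X}}_{t+1}+\bm{h}_{\theta n}^{-1}(\bm{u})}(\xi)=\varphi_{\hat{\bm{X}}_{t+1}}(\xi)\,\varphi_{\bm{h}_{\theta n}^{-1}(\bm{u})}(\xi)\longrightarrow\varphi_{\hat{\bm{X}}_{t+1}}(\xi)\,\varphi_{\tilde{\bm{\varepsilon}}}(\xi)=\varphi_{\bm{X}^{\delta t}_{t+1}}(\xi)
\end{equation}
for every $\xi\in\mathbb{R}^d$, and L\'evy's continuity theorem upgrades this to $\hat{\bm{X}}_{t+1}+\bm{h}_{\theta n}^{-1}(\bm{u})\to\bm{X}^{\delta t}_{t+1}$ in distribution, which is the claim.

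The step I expect to be the main obstacle is this reattachment: the factorization $\varphi_{\hat{\bm{X}}_{t+1}+\tilde{\bm{\varepsilon}}}=\varphi_{\hat{\bm{X}}_{t+1}}\varphi_{\tilde{\bm{\varepsilon}}}$ presupposes that the error is independent of the reservoir prediction, which is clean only in the idealized additive-noise picture (RC reproducing the drift, so that $\tilde{\bm{\varepsilon}}$ reduces to the Brownian increment); making this rigorous, or restating the conclusion conditionally, is the delicate point. One can instead read the statement purely at the level of laws and replace the independent draw of $\bm{u}$ by the Knothe--Rosenblatt coupling $\bm{u}=\bm{h}_\theta(\tilde{\bm{\varepsilon}})$, under which $\hat{\bm{X}}_{t+1}+\bm{h}_\theta^{-1}(\bm{u})=\bm{X}^{\delta t}_{t+1}$ almost surely and the result follows from $\bm{h}_{\theta n}^{-1}\to\bm{h}_\theta^{-1}$ pointwise. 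Either way, the remaining technical nuisance is to deduce pointwise convergence of the inverses $\bm{h}_{\theta n}^{-1}$ from that of $\bm{h}_{\theta n}$, which is where the increasing-triangular structure has to be genuinely used rather than merely invoked.
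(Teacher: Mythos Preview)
Your proposal is correct and follows essentially the same route as the paper: invoke Proposition~\ref{lemma: measure transform} and Proposition~\ref{lemma: measure transform weak} to get $\bm{h}_{\theta n}^{-1}(\bm{u})\Rightarrow\tilde{\bm{\varepsilon}}$, then pass to the sum $\hat{\bm{X}}_{t+1}+\bm{h}_{\theta n}^{-1}(\bm{u})$ using the convolution structure that independence provides. The only cosmetic difference is the tool in the last step: the paper tests against $\bm{f}\in C_b(\mathbb{R}^d)$ directly, writing $\mu_{\hat{\bm{X}}_{t+1}+\tilde{\bm{\varepsilon}}_n}=\mu_{\hat{\bm{X}}_{t+1}}*\mu_{\tilde{\bm{\varepsilon}}_n}$ and observing that $\bm{z}\mapsto\int \bm{f}(\bm{y}+\bm{z})\,\mu_{\hat{\bm{X}}_{t+1}}(d\bm{y})$ is bounded and continuous, whereas you use characteristic functions and L\'evy's continuity theorem. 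Your worry about the independence hypothesis is well placed but applies equally to the paper, which invokes the convolution identity without comment; likewise the paper simply asserts the ``equivalently'' passage from $(\bm{h}_{\theta n})_*\mu_{\tilde{\bm{\varepsilon}}}\Rightarrow\bm{h}_{\theta*}\mu_{\tilde{\bm{\varepsilon}}}$ to $(\bm{h}_{\theta n}^{-1})_*\mu_{\bm{u}}\Rightarrow\bm{h}_{\theta*}^{-1}\mu_{\bm{u}}$ without the inverse-convergence argument you flag.
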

\begin{proof}
According to Proposition \ref{lemma: measure transform} and Proposition \ref{lemma: measure transform weak}, there exists an increasing triangular transformation $\bm{h}_\theta: \mathbb{R}^d \rightarrow \mathbb{R}^d$ and a sequence of maps \{$\bm{h}_{\theta n}: \mathbb{R}^d \rightarrow \mathbb{R}^d$\} which converges pointwise to the map $\bm{h}_\theta$, such that $\mu_{\bm{u}} = \bm{h}_{\theta *}\mu_{\tilde{\bm{\varepsilon}}}$ and  a sequence of measures $(\bm{h}_{\theta n})_* \mu_{\tilde{\bm{\varepsilon}}}$ weakly converges to $\bm{h}_{\theta *} \mu_{\tilde{\bm{\varepsilon}}}$. Equivalently, $\mu_{\tilde{\bm{\varepsilon}}} = \bm{h}_{\theta *}^{-1}\mu_{\bm{u}}$ and  a sequence of measures $(\bm{h}_{\theta n}^{-1})_* \mu_{\bm{u}}$ weakly converges to $\bm{h}_{\theta *}^{-1} \mu_{\bm{u}}$.

For all $\bm{f} \in C_b(\mathbb{R}^d)$ (space of bounded continuous functions), we denote ${\tilde{\bm{\varepsilon}}}_n = \bm{h}_{\theta n}^{-1}(\bm{u})$, then
\begin{equation*}
\begin{split}
    & \lim_{n \rightarrow \infty} \int_{\mathbb{R}^d} \bm{f}(\bm{x}) \mu_{\hat{\bm{X}}_{t+1}+{\tilde{\bm{\varepsilon}}}_n}(d\bm{x})\\
    = & \lim_{n \rightarrow \infty} \int_{\mathbb{R}^d} \bm{f}(\bm{x}) \mu_{\hat{\bm{X}}_{t+1}}*\mu_{{\tilde{\bm{\varepsilon}}}_n}(d\bm{x}) \quad (\text{See Corollary 1.2.3 in \cite{applebaum2009levy}, p. 23})\\
    = & \lim_{n \rightarrow \infty} \int_{\mathbb{R}^d} \int_{\mathbb{R}^d} \bm{f}(\bm{y+z}) \mu_{\hat{\bm{X}}_{t+1}}(d\bm{y})\mu_{{\tilde{\bm{\varepsilon}}}_n}(d\bm{z}) \quad (\text{See Proposition 1.2.2 in \cite{applebaum2009levy}, p. 22})\\ 
    = & \int_{\mathbb{R}^d} \int_{\mathbb{R}^d} \bm{f}(\bm{y+z}) \mu_{\hat{\bm{X}}_{t+1}}(d\bm{y})\mu_{{\tilde{\bm{\varepsilon}}}}(d\bm{z})\\
    = & \int_{\mathbb{R}^d} \bm{f}(\bm{x}) \mu_{\hat{\bm{X}}_{t+1}}*\mu_{{\tilde{\bm{\varepsilon}}}}(d\bm{x}) = \int_{\mathbb{R}^d} \bm{f}(\bm{x}) \mu_{\bm{X}^{\delta t}_{t+1}}(d\bm{x}),
\end{split}
\end{equation*}
where, in the penultimate line, the function $\int_{\mathbb{R}^d} \bm{f}(\bm{y+z}) \mu_{\hat{\bm{X}}_{t+1}}(d\bm{y})$ is a bounded continuous function due to $\bm{f} \in C_b(\mathbb{R}^d)$ and the Borel probability measure $\mu_{\hat{\bm{X}}_{t+1}}(d\bm{y})$. Then the result follows, i.e., $\hat{\bm{X}}_{t+1} + \bm{h}_{\theta n}^{-1}(\bm{u})$ convergences to $\bm{X}^{\delta t}_{t+1}$ in distribution.
\end{proof}

\begin{remark}
    Theorem \ref{th: d-convergence} states that the predicted result of RC-NF for the next step will converge to the true one in distribution, which is slightly different from Theorem \ref{th: main}. The convergence result in Theorem \ref{th: main} is in the strong sense (in $L^2$-norm), whereas the convergence result in Theorem \ref{th: d-convergence} is in the weak sense (in distribution). The conclusion in Theorem \ref{th: d-convergence} is weak than Theorem \ref{th: main}, but it is enough to ensure long-term predictions in distribution due to the error correction mechanism. Although in the weak sense, Theorem \ref{th: d-convergence} is more suitable for realistic scenarios. 
\end{remark}

The universality described above could be proved for spline flows \cite{NEURIPS2019_7ac71d43} with splines for coupling functions. In fact, RQ-NSF (AR) (in Sec. \ref{sec: NF}) itself is a triangular and increasing map. The iterative training of NF is to construct a sequence of maps \{$\bm{h}_{\theta n}$\}, for $n=1,2,\dots$, to approximate the required transformation $\bm{h}_{\theta}$, and the result follows. For more specific explanations, we recommend \cite{NEURIPS2019_7ac71d43,huang2018neural}.

So far, we have shown that NF with the specific transformation $\bm{h}_{\theta}$ (RQ-NSF (AR)) can approximate the probability measure $\bm{\mu}_{\tilde{\bm{\varepsilon}}}$ or the probability density function $\bm{p}({\tilde{\bm{\varepsilon}}})$. Following a single-step error correction, the probability density function $\lim_{n \rightarrow \infty} \bm{p}(\hat{\bm{X}}_{T+1} + \tilde{\bm{\varepsilon}}_n)$ is identical to the target probability density function $\bm{p}(\bm{X}^{\delta t}_{T+1})$, that is, $\hat{{\bm{X}}}_{T+1} + \tilde{\bm{\varepsilon}}$ and $\bm{X}^{\delta t}_{T+1}$ have the same distribution. We follow the notation $\tilde{\bm{X}}_{T+1}:=\hat{\bm{X}}_{T+1} + \tilde{\bm{\varepsilon}}$ in Sec. \ref{sec: NF} to represent the RC prediction $\hat{\bm{X}}_{T+1}$ corrected by the NF single-step error modeling $\tilde{\bm{\varepsilon}}$. The prediction $\tilde{\bm{X}}_{T+1}$ is used as the next step input of RC and the RC output result is corrected by the NF error distribution again. Repeating this procedure, we can utilize RC-NF to achieve long-term predictions of stochastic dynamical systems.

In general, the single-step prediction result of RC strongly converges to the random variable at the corresponding moment of the target stochastic process. NF learns the error distribution to make it possible for the RC framework to predict the long-term evolution of stochastic dynamical systems in the sense of weak convergence. The difficulties of improving the weak convergence to the strong convergence may require further analysis of the structure of RC, which is out of the scope of this work. We refer to \cite{LiRNAVelocity} for further discussing the continuum limit of discrete transition probabilities, which partially answers our question. Despite this theoretic analysis, we draw some ``trajectories" in the experimental part (Sec. \ref{sec: experiments}), which seem reasonable and do reflect the dynamical behaviors of original stochastic dynamical systems. But, importantly, we cannot make sure these ``trajectories" converge to the true trajectories in the pathwise sense (almost surely). Loosely speaking, in a fixed hypothesis function space, RC can be thought of as the ``best'' approximation of the target stochastic process. Meanwhile, RC-NF somewhat expands the hypothesis space of RC, that is, RC-NF further reduces the approximation error between the RC model $\bm{\hat{X}}_t$ and the target stochastic process $\bm{X}_t^{\delta t}$.

\section{Experiments} \label{sec: experiments}

This section aims to demonstrate the effectiveness of our method, RC-NF, by exploring it through a series of experiments from different aspects. Specifically, our analysis considers linear/non-linear systems, Markov/non-Markov processes, stationary/non-stationary models, and an ergodic system. We summarize them as follows.
\begin{itemize}
    \item A fundamental experiment: the Ornstein-Uhlenbeck process (OU process) is a simple linear process to account for the ineffectiveness of conventional RC and the effectiveness of RC-NF.
    \item The capacity to capture complex dynamical behaviors:
    \begin{itemize}
        \item the Double-Well system (DW system) exhibits transition phenomena or noise-induced tipping phenomena in the presence of noise;
        \item the stochastic Van der Pol oscillator exhibits variances of state variables that fluctuate with time, due to whether the trajectories pass through a slow manifold or not;
        \item the dynamics of the stochastic mixed-mode oscillation (stochastic MMO) differ qualitatively from the corresponding deterministic system, even with small perturbations.
    \end{itemize}
    \item Exploration of stochastic differential equations with memory (i.e., SDDEs):
    \begin{itemize}
        \item a linear SDDE that can be solved explicitly to illustrate the capacity of the RC-NF framework on the fundamental non-Markovian stochastic system with time delays;
        \item the El Ni\~no-Southern Oscillation (ENSO) simplified model is a more complex time-delay system that is non-linear, non-Markovian, and non-stationary.
    \end{itemize}
    \item The stochastic Lorenz system, a chaotic system, is employed to demonstrate the substantial improvement of RC-NF over RC through the evaluation of multiple criteria (probability density function, maximum Lyapunov exponent, close returns, autocorrelation function, and cross-correlation function).
\end{itemize}

We use the Euler-Maruyama scheme to generate trajectory data for all experiments. For all systems, except for the stochastic Lorenz system, we assume that the numerical scheme step size $\delta t$ and the observation time step (or sampling time step) $\Delta t$ are equal. We choose 500 reservoir nodes for one-dimensional systems and 1000 reservoir nodes for multidimensional systems. The hyperparameters of RC chosen by BO for various experiments are displayed in Appendix \ref{appendix: hyper}.

We utilize RQ-NSF(AR) as the foundational transformation for normalizing flows and apply it twice through composition to obtain the final transformation $\bm{h}_\theta$ (\ref{eq: nf h_theta}). Our basic neural network consists of a fully connected neural network with two hidden layers, each containing 8 nodes. To train the model, we use the Adam optimizer with a learning rate of 0.005 and perform 500 iterations.

We list the common partial criteria across all experiments, along with their respective results in each experiment. The criteria for specific systems will be described in detail as needed. The Wasserstein distance is a distance function that measures the dissimilarity between probability distributions on a specific metric space. In the following, the Wasserstein 2-distance ($W_2$) is employed. The Kullback-Leibler divergence (KL divergence) is a measure of how one probability distribution $P$ differs from a second, reference probability distribution $Q$, denoted as $D_{KL}(P\|Q)$. We employ them to measure the discrepancy between the probability distributions of the data and the forecasts generated by our surrogate model, RC-NF. Tables \ref{tab: wasser} and \ref{tab: kl} show the Wasserstein distances $W_2(\bm{X}_t,\tilde{\bm{X}}_t)$ and KL divergences $D_{KL}(\tilde{\bm{X}}_t \| \bm{X}_t)$ at several predicted snapshots and the means of them across all predicted time.

\begin{table}[!ht] 
\caption{Wasserstein distances at several predicted snapshots and the means of Wasserstein distances across all predicted snapshots. Time $t_i$, $i = 1,\dots,5$ represent the different forecasting snapshots we select. Each of these times $10^{-3}$.}
\label{tab: wasser}
\centering
\begin{tabular}{l|cccccc} 
\bottomrule
\textbf{Experiments} & $t_1$ & $t_2$ & $t_3$ & $t_4$ & $t_5$ & \textbf{Mean} \\ 
\hline
OU process & $10.494$ & $11.959$ & $13.173$ & $14.349$ & $17.374$ & $12.385$\\
DW system & $5.3359$ & $5.4639$ & $ 5.4390$ & $5.1680$ & $5.1745$ & $5.4179$\\
Van del Pol oscillator & $8.1369$ & $8.7457$ & $9.8129$ & $11.198$ & $10.591$ & $9.4182$\\
stochastic MMO & $6.0690$ & $8.5117$ & $9.4959$ & $10.286$ & $11.074$ & $9.2720$\\
Linear SDDE & $8.9689$ & $7.9715$ & $8.6401$ & $7.1655$ & $7.9070$ & $7.9967$\\
ENSO simplified model & $7.8604$ & $5.7932$ & $6.4838$ & $7.9760$ & $6.1406$ & $6.4873$\\
\toprule
\end{tabular}
\end{table}

\begin{table}[!ht] 
\caption{KL divergences at a number of predicted snapshots and the means of KL divergences across all predicted snapshots. Time $t_i$, $i = 1,\dots,5$ represent our selected forecasting snapshots. Each of these times $10^{-3}$.}
\label{tab: kl}
\centering
\begin{tabular}{l|cccccc} 
\bottomrule
\textbf{Experiments} & $t_1$ & $t_2$ & $t_3$ & $t_4$ & $t_5$ & \textbf{Mean} \\ 
\hline
OU process & $2.3487$ & $2.2405$ & $5.2048$ & $3.4695$ & $4.1003$ & $2.6387$\\
DW system & $2.9123$ & $4.9392$ & $3.5568$ & $0.9387$ & $3.1716$ & $3.1879$\\
Van del Pol oscillator & $10.057$ & $24.635$ & $15.497$ & $23.901$ & $19.479$ & $14.887$\\
stochastic MMO & $9.7730$ & $27.429$ & $16.502$ & $23.862$ & $18.406$ & $17.692$\\
Linear SDDE & $1.4285$ & $1.6431$ & $1.7739$ & $2.0469$ & $3.0536$ & $1.7927$\\
ENSO simplified model & $3.9270$ & $1.5115$ & $7.4493$ & $3.5715$ & $2.2184$ & $2.8313$\\
\toprule
\end{tabular}
\end{table}

\subsection{Ornstein-Uhlenbeck process: a fundamental experiment for long-term prediction}\label{sec: exper: ou}

We describe the Ornstein-Uhlenbeck process (OU process) \cite{duan2015introduction} as follows:
\begin{equation}\label{eq: ouprocess}
    d X_t = b_0(\mu_0-X_t) dt + g dB_t,\quad \text{for}\  t \geq 0,
\end{equation}
where $b_0>0$ indicates the rate of mean reversion, $\mu_0 \in \mathbb{R}$ is the mean value, and the constant $g>0$ is the volatility. The stochastic process $B_t$ is a scalar Brownian motion. For a given initial state $X_0= x$, the analytical solution of the OU process is $X_t = (1-e^{-b_0t})\mu_0 + xe^{-b_0t} + g \int_0^te^{-b_0(t-s)}dB_s$. More specifically, $X_t \sim N(\mu_0 + (x-\mu_0)e^{-b_0t}, g^2\int_0^te^{-2b_0(t-s)}ds)$. The solution $X_t$ is a stationary process that admits a Gaussian distribution $N(\mu_0, g^2/2b_0)$ when $t \rightarrow \infty$.

Assuming that the parameters of the system are $b_0 = 0.15$, $\mu_0=1$, and $g=1$. We use the Euler-Maruyama scheme for solving this system with time step $\delta t=0.01$, and the observed data are recorded at every time step, i.e., $\Delta t=0.01$. The initial value is $X_0 = 0$. We generate 1000 trajectories with a total length of 4000, where the training length $T$, verification length $T_{valid}$, and prediction length $T_{test}$ are 2000, 100, and 1900 respectively, and the first 100 steps of training are used for warm-up.

First, Fig. \ref{fig: ou rc and rc-nf}  shows the significant difference between RC and RC-NF in their ability to predict the OU process. Since the OU process follows Gaussian distribution at arbitrary fixed time $t$, we can build confidence intervals using the ``three-sigma rule," which correspond to 68\%, 95\%, and 99.7\% confidence intervals, respectively (magenta dashed lines in Fig. \ref{fig: ou rc and rc-nf}, the middle line represents the mean $\mu_0 + (x-\mu_0)e^{-b_0t}$). The solid blue line denotes the sample mean, and the shaded blue portions from dark to light represent the intervals calculated from the trajectory data, which are made up of 68\%, 95\%, and 99.7\% of the data surrounding the median value for each time $t$. Rolling predictions using RC and RC-NF produce the red dotted dashes and shaded areas. RC alone is unable to provide an accurate forecast of the OU process distribution in the long term, while the RC-NF framework is capable of doing so.
\begin{figure}[!ht]
  \centering
   \subfigure[Rolling predictions of RC]{
    \label{fig: subfig: ou rc} 
    \includegraphics[width=0.49\textwidth]{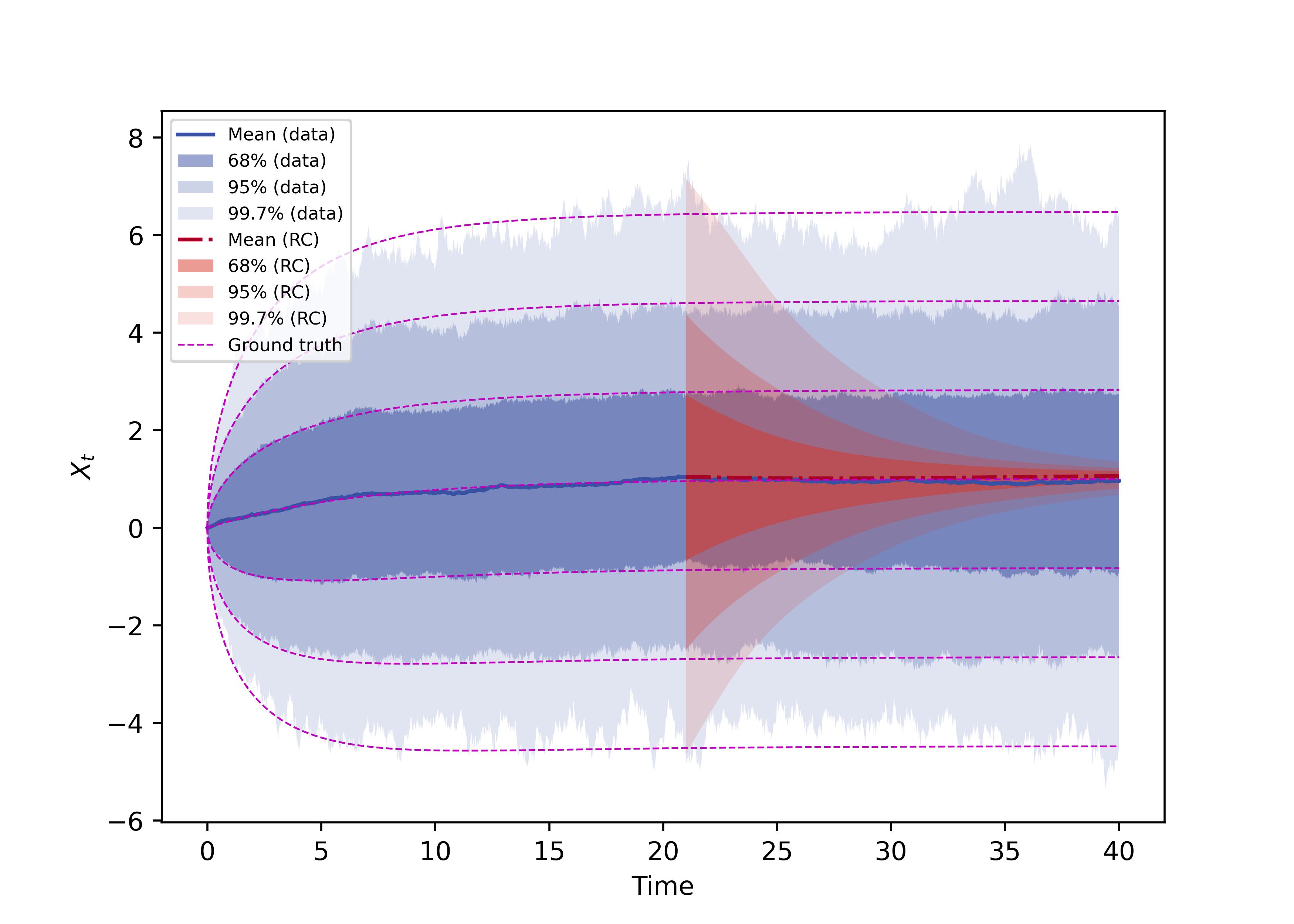}}
  \hspace{-0.1in}  
  \subfigure[Rolling predictions of RC-NF]{
    \label{fig: subfig: ou rc-nf} 
    \includegraphics[width=0.49\textwidth]{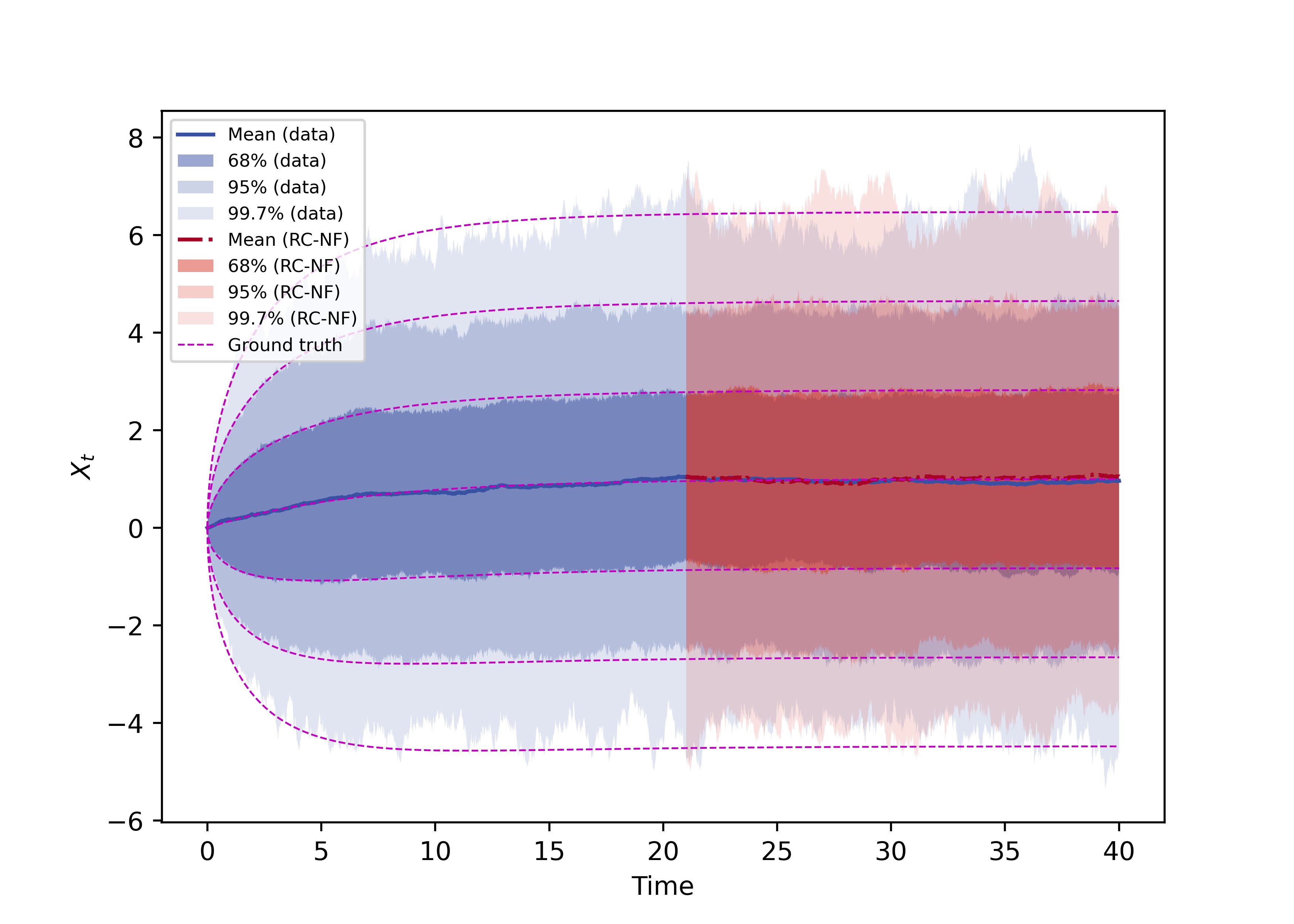}}
  \caption{Rolling predictions of RC and RC-NF. The magenta dashed lines represent the theoretical true values of the means and confidence intervals. The solid blue lines denote the sample means, and the shaded blue portions from dark to light represent the intervals calculated from the trajectory data. Rolling predictions using RC and RC-NF produce the red dotted dashes and shaded areas. (a) Rolling predictions of RC. (b) Rolling predictions of RC-NF.}
  \label{fig: ou rc and rc-nf} 
\end{figure}

To further confirm the effectiveness of the RC-NF, Fig. \ref{fig: subfig: ouheatmap} depicts the probability density functions of the trajectory data and the rolling predictions of RC-NF on the test dataset. Fig. \ref{fig: subfig: ou wasser and kl} displays Wasserstein distances and KL divergences between the reference distributions and the estimated distributions at different times on the test dataset; the red dashed lines represent the selected snapshots, whose probability density functions and specific values are displayed in Fig. \ref{fig: subfig: ou pdf select}, Tables \ref{tab: wasser} and \ref{tab: kl}, respectively. The quantity of trajectory data is also a factor in the discrepancy between RC-NF predictions and trajectory data. Despite all of this, the proposed RC-NF has shown to be effective in achieving long-term predictions of the OU process, as demonstrated by these results. Even though the OU process is a linear, Markov, stationary process, traditional RC fails, and the success of RC-NF further makes us wonder how RC-NF will behave on more complex systems.
\begin{figure}[!ht]
  \centering
  \hspace{-0.2in} 
  \subfigure[PDFs over time. Reference (left) and RC-NF (right)]{
    \label{fig: subfig: ouheatmap} 
    \includegraphics[height=1.98in]{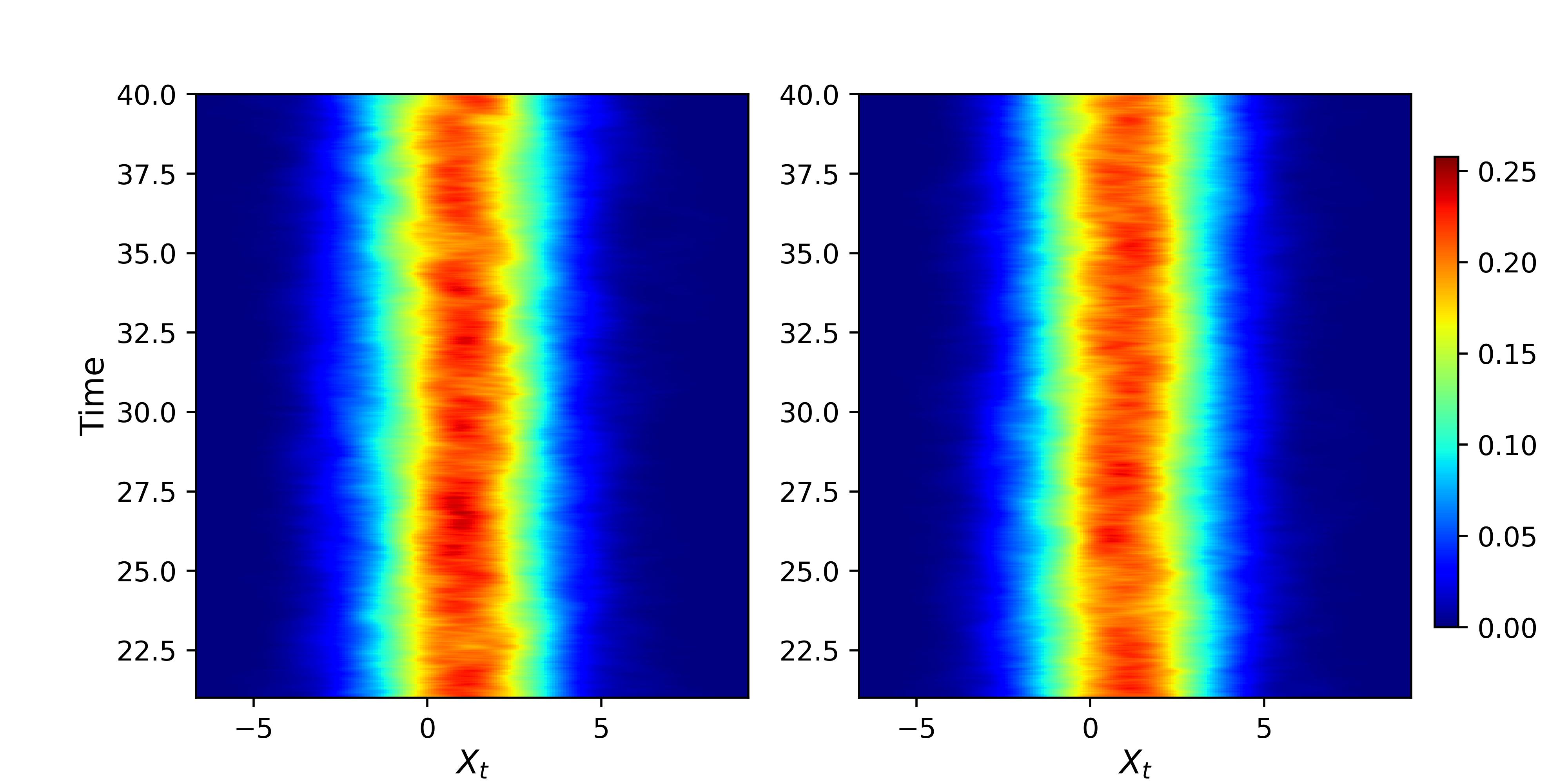}}
  \hspace{-0.1in}
  \subfigure[$W_2(\bm{X}_t,\tilde{\bm{X}}_t)$ and $D_{KL}(\tilde{\bm{X}}_t \| \bm{X}_t)$]{
    \label{fig: subfig: ou wasser and kl} 
    \includegraphics[height=2.04in]{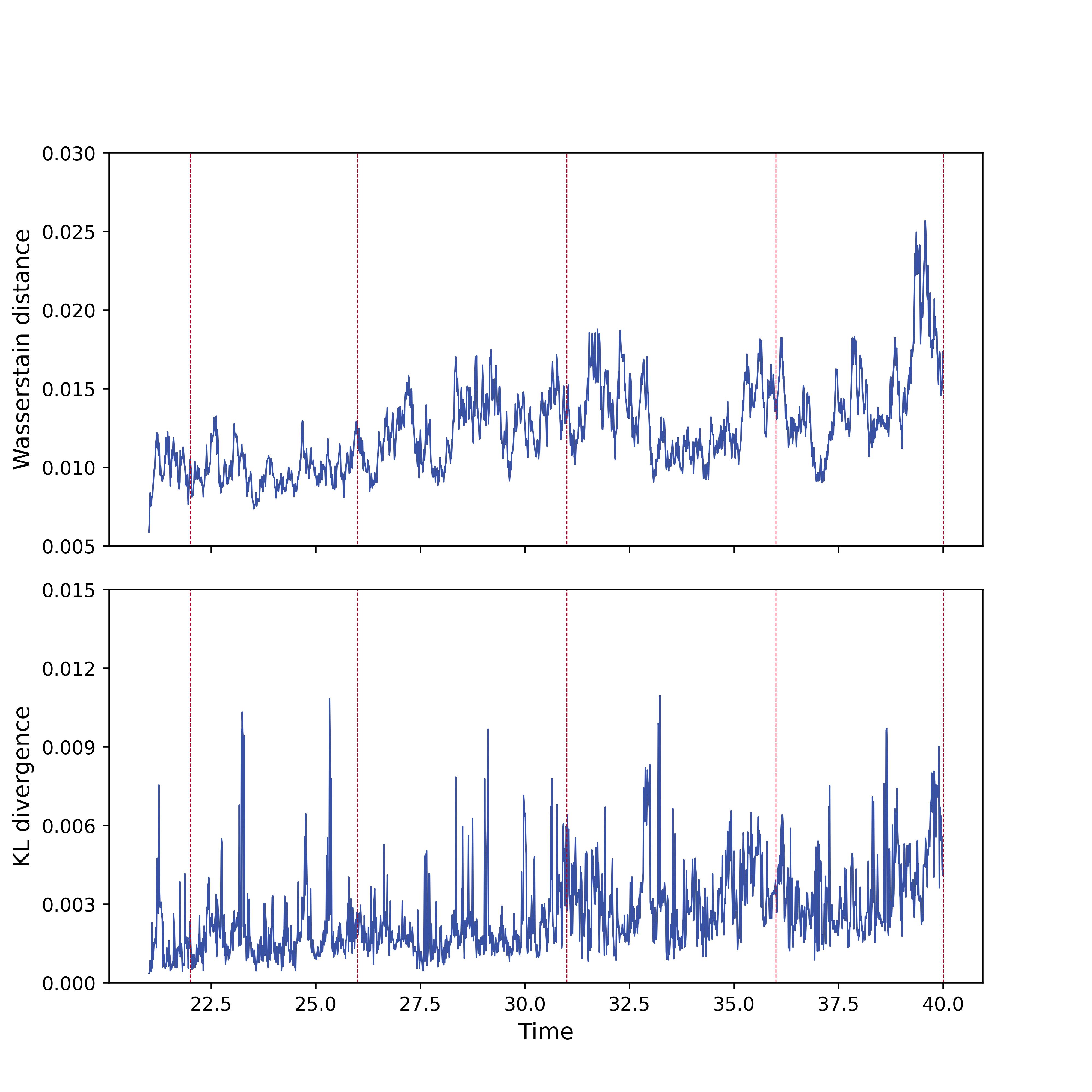}}
  \makebox[\textwidth][c]{\subfigure[PDFs. References (solid blue lines) and RC-NF results (red dashed lines) for several snapshots selected on the test dataset]{
    \label{fig: subfig: ou pdf select} 
    \includegraphics[width=1.12\textwidth]{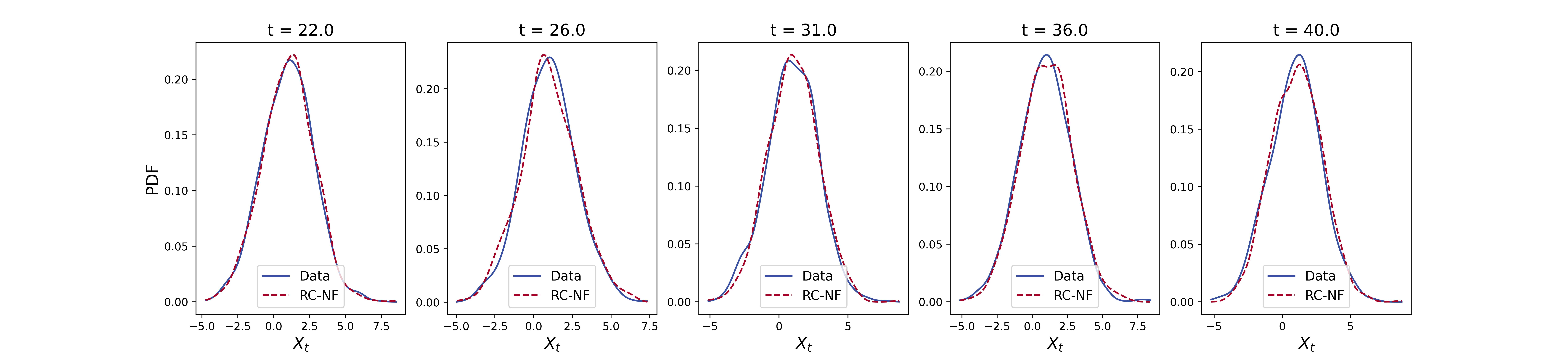}} } 
  \caption{Results of the OU process. PDF is the abbreviation for probability density function. PDFs of the trajectory data and the RC-NF rolling predictions are shown in (a). (b) Wasserstein distance (top) and KL divergence (bottom), the red dashed lines represent snapshots where we draw the PDFs in (c) and display the values in Tables \ref{tab: wasser} and \ref{tab: kl}.}
  \label{fig: ou heatmap,wass, kl, pdf} 
\end{figure}

\subsection{Exploration of complex dynamical behaviors}\label{sec: exper: richer dynamical behaviors}

We explore three experiments exhibiting complex dynamical behaviors. For the Double-Well system, the trajectory of this system possibly transitions between two metastable states under the influence of noise, while the trajectory of the corresponding deterministic system will be attracted to a certain stable point. Due to the presence of relaxation oscillation in the stochastic Van der Pol system, the variances of the state variables display time dependency. The trajectory passing through the folding singularity displays a pattern of stochastic mixed-mode oscillations even in the presence of small perturbations. We present the performance of RC-NF in these three experiments in turn.

\noindent$\bullet$ \textbf{Double-Well system: transition rate} \label{sec: exper: dw}

We consider a DW system \cite{duan2015introduction, HARLIM2021109922, ZHU2023111819} defined on $\mathbb{R}$, 
\begin{equation} \label{eq: dw}
    d X_t = (X_t-X_t^3) dt + g dB_t, \quad \text{for}\  t \geq 0,
\end{equation}
where the constant $g$ is a positive diffusion coefficient and $B_t$ is a scalar Brownian motion. This is a bistable system with its determined counterpart having two stable fixed points ($X_t=-1$, $X_t=1$) and one unstable fixed point ($X_t=0$). The trajectory from this system possibly transitions between two metastable states under the influence of noise, which is not possible in the absence of noise.

Assume that $g=0.5$ (larger noise makes it difficult to calculate the subsequent transition rate). We follow the same time step size for this system as for the OU process, that is, $\delta t = \Delta t = 0.01$. The initial values $X_0$ are drawn from a uniform distribution on $[-1.5,1.5]$. We generate 2000 trajectories with a total length of 4000 using the Euler-Maruyama scheme, where the training length $T$, verification length $T_{valid}$, and prediction length $T_{test}$ are 2000, 100, and 1900 respectively. The first 100 steps of the training dataset are used for warm-up. 

Fig. \ref{fig: subfig: dwheatmap} depicts the probability density functions of the trajectory data and the rolling predictions of RC-NF on the test dataset. Rolling predictions of RC-NF reproduce the bimodal probability density function of the DW system. The two peaks correspond to two metastabilities of the system. Fig. \ref{fig: subfig: dw wasser and kl} displays Wasserstein distances and KL divergences between the reference distributions and the estimated distributions at each time $t$ in the testing phase; the red dashed lines represent the selected snapshots, whose probability density functions and specific values are displayed in Fig. \ref{fig: subfig: dw pdf select}, Tables  \ref{tab: wasser} and \ref{tab: kl}, respectively. The results demonstrate the effectiveness of our proposed RC-NF framework and its ability to predict the DW system over the long term.
\begin{figure}[!ht]
  \centering
  \hspace{-0.2in} 
  \subfigure[PDFs over time. Reference (left) and RC-NF (right)]{
    \label{fig: subfig: dwheatmap} 
    \includegraphics[height=1.98in]{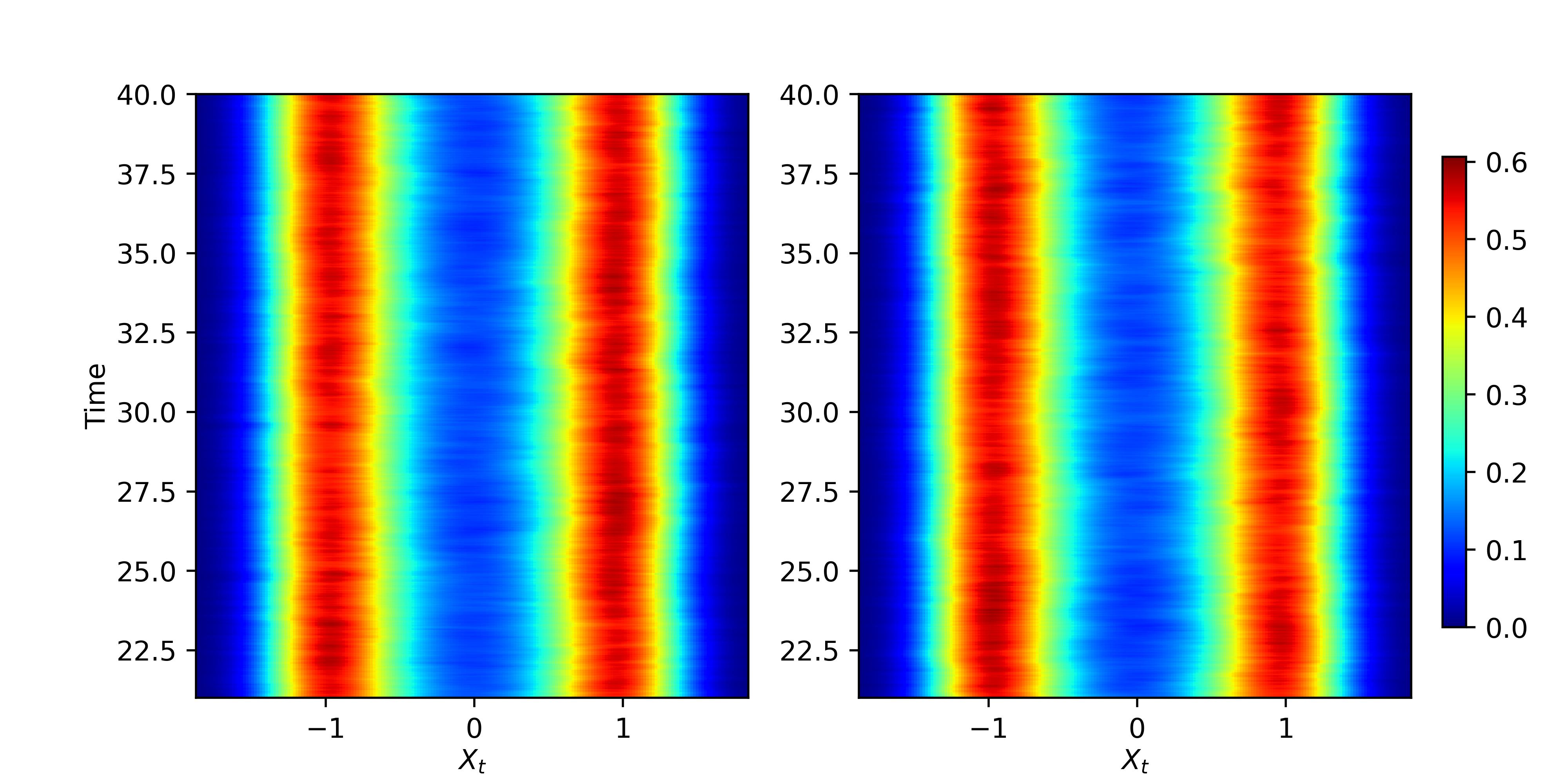}}
  \hspace{-0.1in}
  \subfigure[$W_2(\bm{X}_t,\tilde{\bm{X}}_t)$ and $D_{KL}(\tilde{\bm{X}}_t \| \bm{X}_t)$]{
    \label{fig: subfig: dw wasser and kl} 
    \includegraphics[height=2.04in]{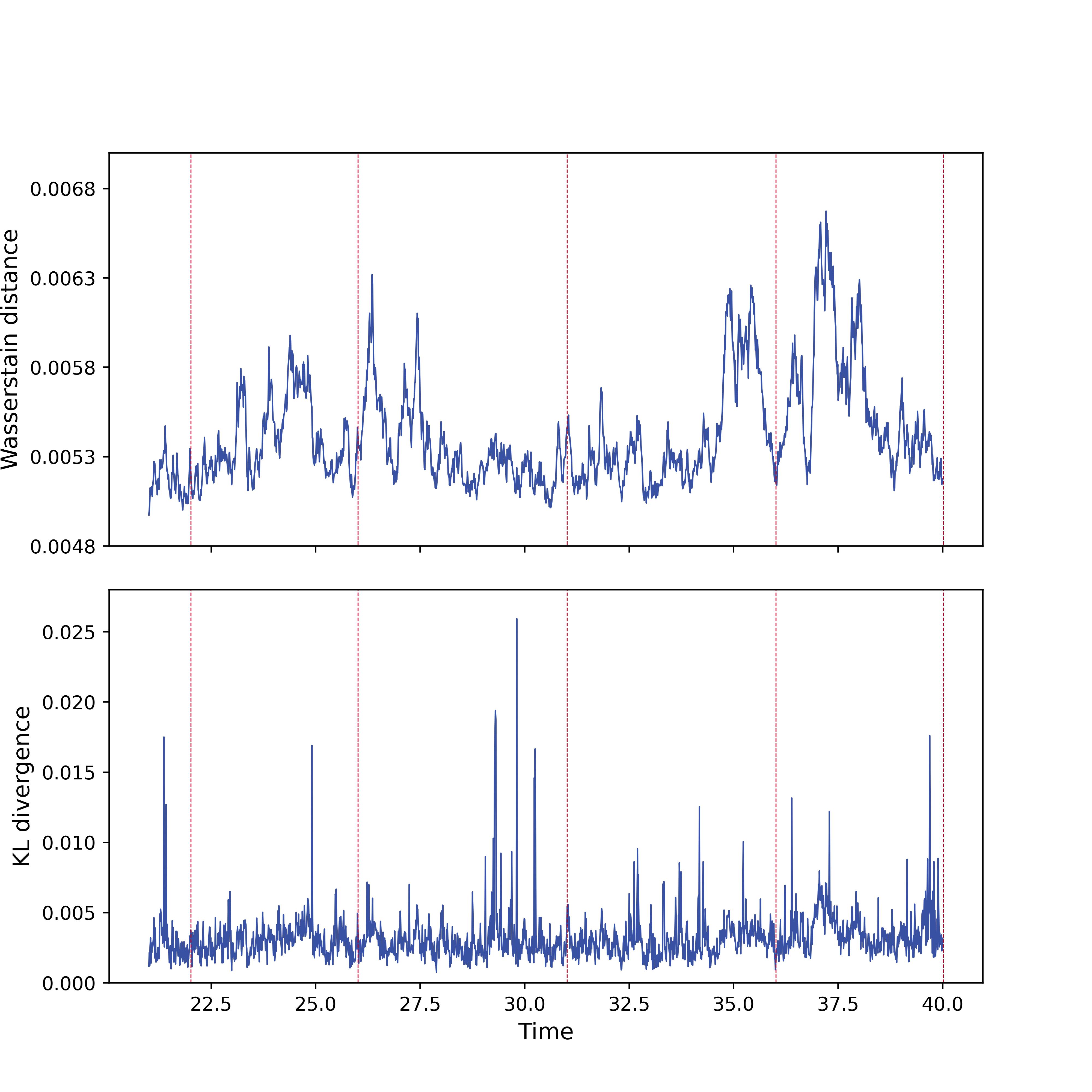}}
  \makebox[\textwidth][c]{\subfigure[PDFs. References (solid blue lines) and RC-NF results (red dashed lines) for several snapshots selected on the test dataset]{
    \label{fig: subfig: dw pdf select} 
    \includegraphics[width=1.12\textwidth]{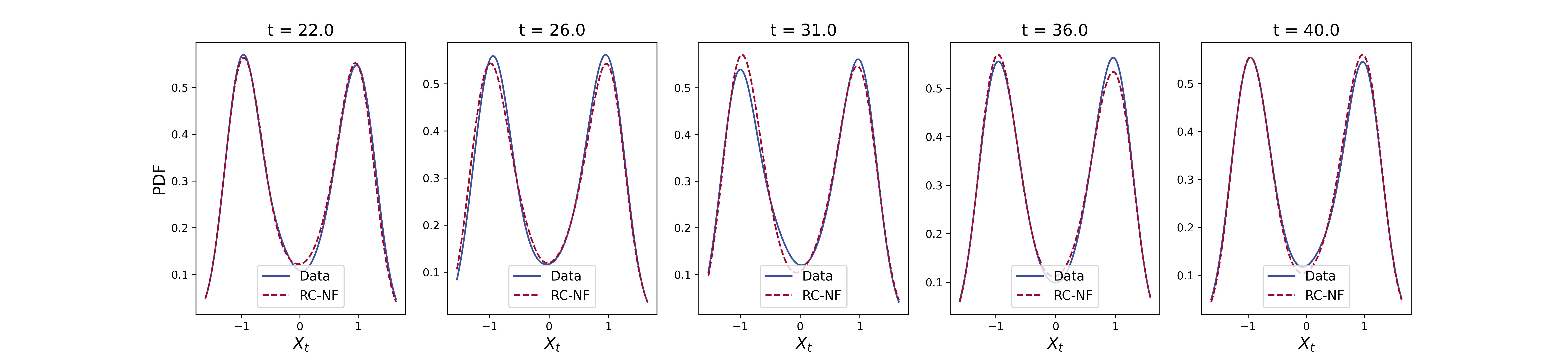}} } 
  \caption{Results of the DW system. PDFs of the trajectory data and the RC-NF rolling predictions are shown in (a). (b) Wasserstein distance (top) and KL divergence (bottom), the red dashed lines represent snapshots where we draw the PDFs in (c) and display the values in Tables \ref{tab: wasser} and \ref{tab: kl}.}
  \label{fig: dw heatmap,wass, kl, pdf} 
\end{figure}

The transition phenomenon of trajectories between two metastabilities in the DW system is a rare event, and it is also known as noise-induced tipping \cite{ashwin2012tipping}. We use the trained RC-NF model to generate new sample trajectories to verify whether it can approximate the transition rate of the rare event. Following \cite{ZHU2023111819, Bolhuis2002annurev.physchem, dellago1998transition}, we first separate the phase space for $X_t$ into $A = (\infty, 0]$ and $B = (0,+\infty)$ regions in order to calculate the transition rate between the two states. The time correlation function $C_{AB}(t)$ is defined as follows:
\begin{equation}\label{eq: Cab}
\frac{C_{AB}(t)}{C_{A}} = \frac{\langle I_A(X_0) I_B(X_t) \rangle}{\langle I_A(X_0) \rangle},
\end{equation}
where $I_A(\cdot)$ is an indicator function satisfying $I_A(X_t) = 1$ if $X_t \in A$, and $I_A(X_t) = 0$ if $X_t \notin A$. The indicator function $I_B(X_t)$ is defined similarly. The symbol $\langle \cdot \rangle$ denotes the ensemble average. When the system is originally in region $A$, the ratio (\ref{eq: Cab}) represents the probability of finding the system in region $B$ after time $t$. The ratio $C_{BA}(t)/C_B$ can be defined in a similar way. Furthermore, the transition rate from $A$ to $B$ can be calculated as:
\begin{equation}\label{eq: kab}
    k_{AB} = \frac{d}{dt}\frac{C_{AB}(t)}{C_{A}}, \quad \text{for}\ \tau_{mol} < t \ll \tau_{rxn},
\end{equation}
where $\tau_{mol}$ is a short transient time \cite{Bolhuis2002annurev.physchem,dellago1998transition} and $\tau_{rxn} = 1/(k_{AB}+k_{BA})$ is the exponential relaxation time. The rate $k_{AB}$ is actually the slope of the time correlation function $C_{AB}(t)$ for $\tau_{mol} < t \ll \tau_{rxn}$. For the calculation of transition rate $k_{AB}$, we set the initial value $X_0=-1$. The first 100 steps of data are used to warm up. A total of $10^4$ trajectories are generated by the Euler-Maruyama scheme and the trained RC-NF model, respectively. Using equations (\ref{eq: Cab}) and (\ref{eq: kab}), the transition rates of trajectory data (denoted as $k_{AB}$) and of generating trajectories by the RC-NF model (denoted as $\tilde{k}_{AB}$) can be calculated. The transition rate $k_{BA}$ is calculated similarly, except that the initial value $X_0=1$. Fig. \ref{fig: dw transition rate} shows the calculated results of $C_{AB}(t)/C_{A}$ and $C_{BA}(t)/C_B$, and the transition rates are determined by linear fitting. Specifically, $k_{AB} = 1.4684 \times 10^{-2}$, $\tilde{k}_{AB} = 1.4475 \times 10^{-2}$, $k_{BA} = 1.4812 \times 10^{-2}$, $\tilde{k}_{BA} = 1.5322 \times 10^{-2}$. The errors of transition rates between the trajectory data and the trajectories generated by the RC-NF model are both less than 0.001. Additionally, the exponential relaxation time $\tau_{rxn} \approx 33$s. In order to satisfy the condition $\tau_{mol} < t \ll \tau_{rxn}$ so that $k_{AB}$ and $k_{BA}$ remain constant, we choose the time interval $t \in [5,25]$ to calculate $k_{AB}$ and $k_{BA}$, and display the results in Fig. \ref{fig: dw transition rate}. The RC-NF model successfully reproduces the transition rates of the original stochastic DW system.
\begin{figure}[!ht]
    \centering
    \subfigure[Transition rate $k_{AB}$]{
    \includegraphics[width=0.49\textwidth]{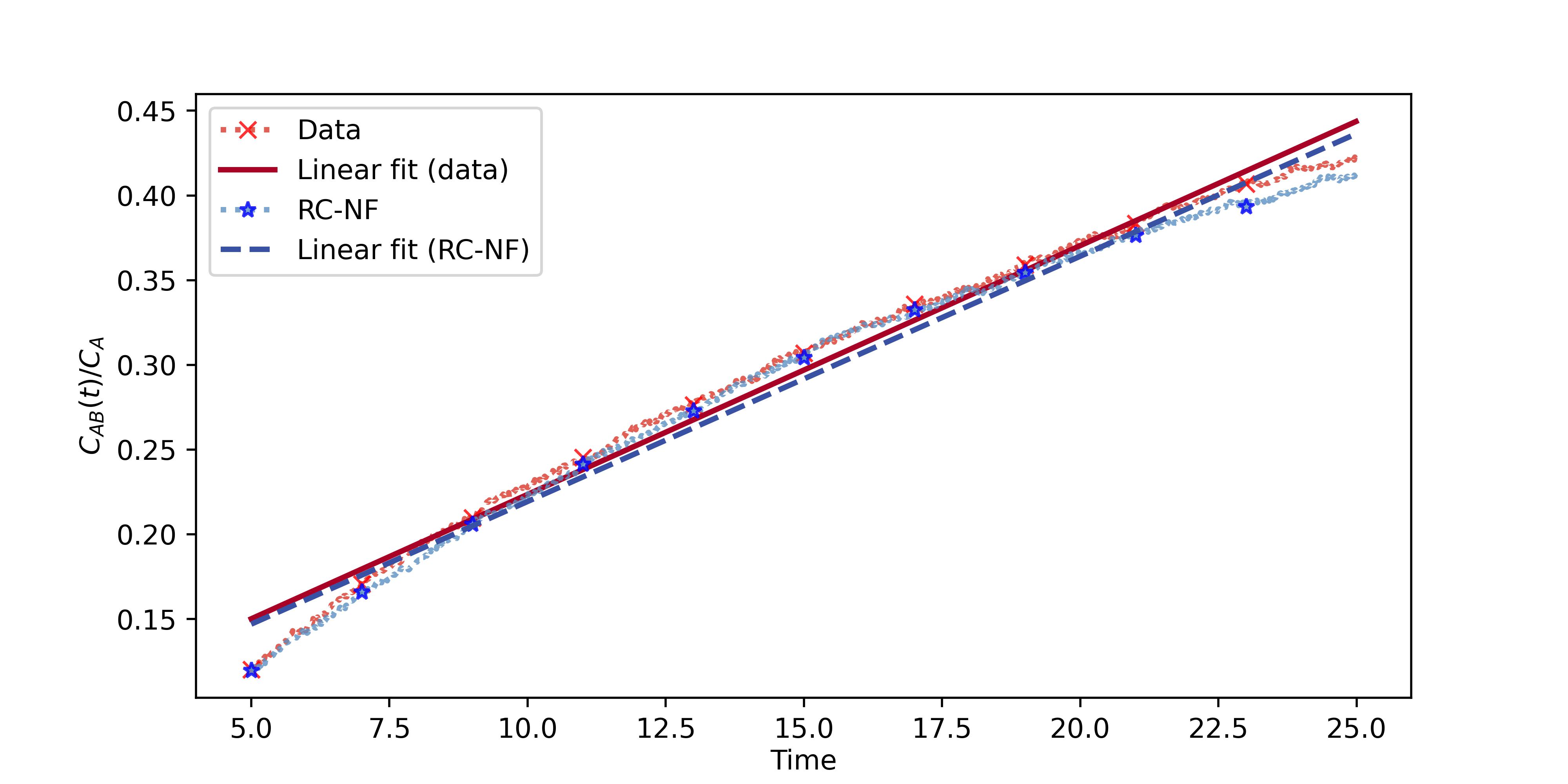}
    \label{fig: subfig: dwkab}
    }
    \hspace{-0.2in}
    \subfigure[Transition rate $k_{BA}$]{
    \includegraphics[width=0.49\textwidth]{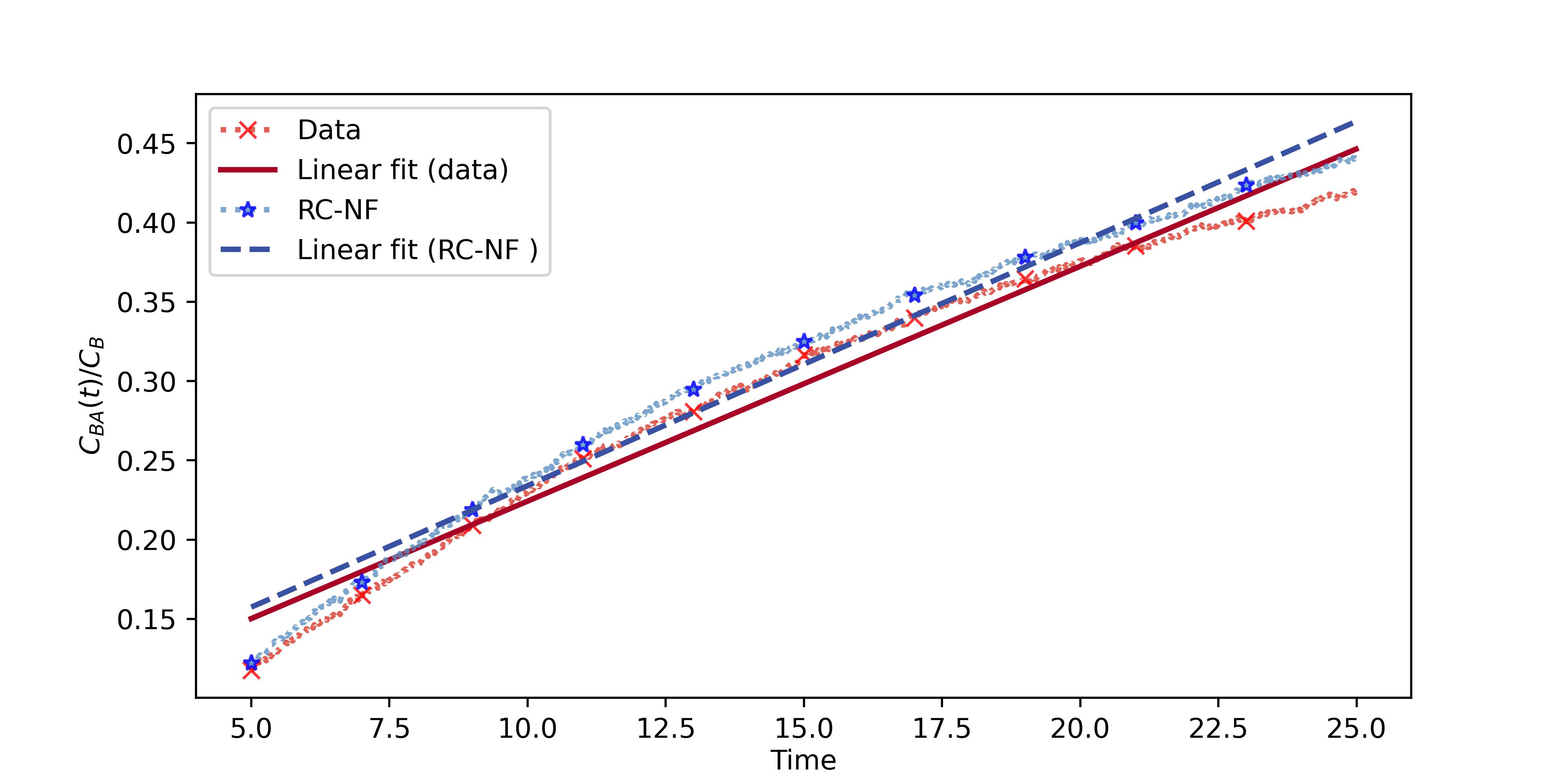}
    \label{fig: subfig: dwkba}
    }
    \caption{Transition rates (a) $k_{AB}$ and (b) $k_{BA}$ are obtained by trajectory data and RC-NF model generating trajectories. The red cross-dot lines are calculated from trajectory data, and the linear fitting produces the solid red lines. The blue asterisk dot lines are calculated from the RC-NF generating trajectories, while linear fitting produces the dashed blue lines.}
    \label{fig: dw transition rate}
\end{figure}

The RC-NF model can be successfully employed for DW systems, whether evaluated from the probability density function or the transition rate. In other words,  RC-NF could serve as a surrogate model for analyzing stochastic DW systems, allowing for a deeper understanding of their internal mechanisms and enabling early warning of rare events or critical transition phenomena.

\noindent$\bullet$ \textbf{Stochastic Van der Pol oscillator: the variances of the state variables fluctuate over time} \label{sec: exper: vdp}

We consider the Van der Pol system with additive noise on $\mathbb{R}^2$:

\begin{equation} \label{eq: vdp}
\begin{cases}
    dX_t = \mu_0(X_t - X_t^3/3 - Y_t)dt + g^1 dB^1_t,\\ 
    dY_t = (X_t/\mu_0)dt + g^2 dB^2_t,
\end{cases}
\end{equation}
where $\mu_0 \in \mathbb{R}$. The positive constants $g^1$, $g^2$ are diagonal elements of the diffusion coefficient matrix $\bm{g}$ and $[B^1_t, B^2_t]^T$ is a two-dimensional standard Brownian motion. We set $\mu_0 = 8$, $g^1=g^2=0.1$. The corresponding deterministic dynamical system of the model (\ref{eq: vdp}) has a unique, stable limit cycle for each $\mu_0 > 0$. When $\mu_0$ is large, the oscillator behaves in a slow buildup and fast release cycle, slowly moving up (moving down) the right branch (left branch) of the cubic curve $Y_t = X_t - X^3_t/3$ with time $O(\mu_0)$ and quickly moving to the left branch (right branch) of the cubic curve with time $O(\mu_0^{-1})$. This kind of oscillation is also called relaxation oscillation. The left and right branches of this limit cycle are two slow manifolds of this system. The limit cycle and the cubic curve $Y_t = X_t - X^3_t/3$ in Fig. \ref{fig: subfig: vdpmanifoldpdf} provide an intuitive understanding. In the presence of small additive noise, the variance of $X_t$ is small when the trajectory passes the slow manifold, and large when it ``jumps" between two slow manifolds. 

Following the previous experiments, we set the time step for this system to $\delta t = \Delta t = 0.01$. The initial value is $[X_0, Y_0]^T = [0.5,0]^T$. We generate 1000 trajectories with a total length of 4000 using the Euler-Maruyama scheme, where the training length $T$, verification length $T_{valid}$, and prediction length $T_{test}$ are 2000, 100, and 1900 respectively, and the first 100 steps of training are used for warm-up.

The results of rolling predictions of the RC-NF model are shown in Fig. \ref{fig: vdp heatmap,wass, kl, pdf}. Fig. \ref{fig: subfig: vdpheatmap} depicts the probability density functions of the trajectory data and the rolling predictions of RC-NF on the test dataset. Both exhibit similar evolutionary patterns, which means that these trajectories move around the limit cycle over time and spend a long time near the slow manifolds. Fig. \ref{fig: subfig: vdpmanifoldpdf} shows the distributions in two-dimensional space of data and prediction results of RC-NF in the test phase. Both indicate that a large number of data points and predicted values are gathered near the limit cycle, and the peaks are located on the slow manifolds. Fig. \ref{fig: subfig: vdp wasser and kl} displays Wasserstein distances and KL divergences between the reference distributions and the estimated distributions at different times $t$; the red dashed lines represent the selected snapshots, whose probability density functions and specific values are displayed in Fig. \ref{fig: subfig: vdp pdf select}, Tables  \ref{tab: wasser}  and \ref{tab: kl}, respectively. The empirical variances of the state variables $[X_t, Y_t]^T$ during the testing phase are shown in Fig. \ref{fig: subfig: vdpvariance}. The data and rolling predictions of RC-NF exhibit synchronous oscillations. These results demonstrate the effects of our proposed RC-NF framework.
\begin{figure}[!ht]
  \hspace{0.1in}
  \centering 
  \subfigure[PDFs over time. From left to right: Reference ($X_t$), RC-NF ($X_t$), Reference ($Y_t$) and RC-NF ($Y_t$)]{
    \label{fig: subfig: vdpheatmap} 
    \includegraphics[height=1.65in]{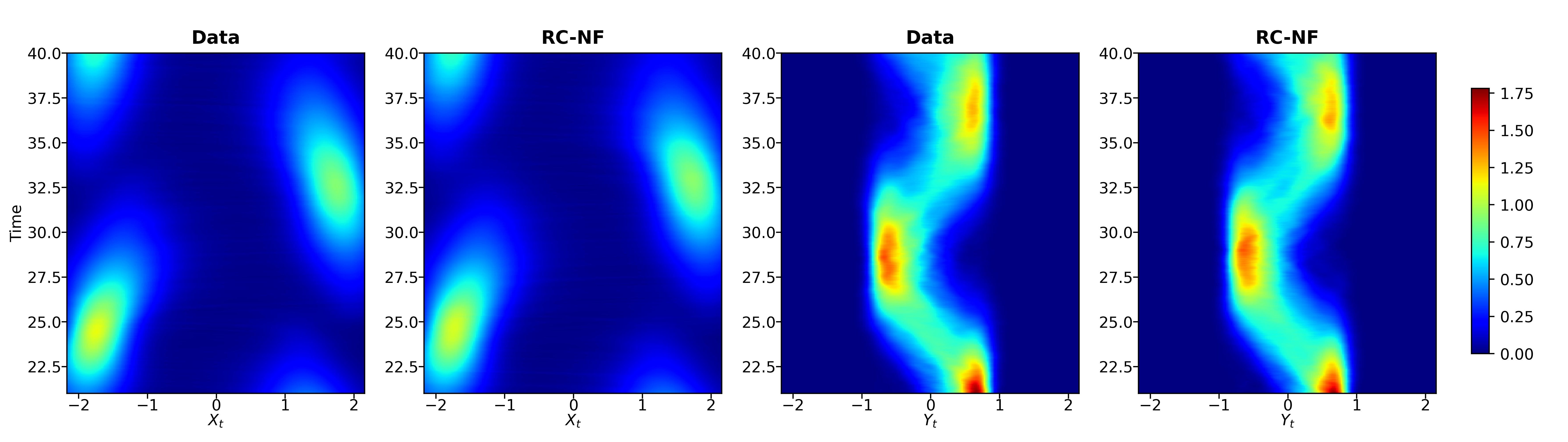}}
  \subfigure[Distributions of ($X_t$, $Y_t$) in testing. Reference (left) and RC-NF (right)]{
    \label{fig: subfig: vdpmanifoldpdf} 
    \includegraphics[height=1.98in]{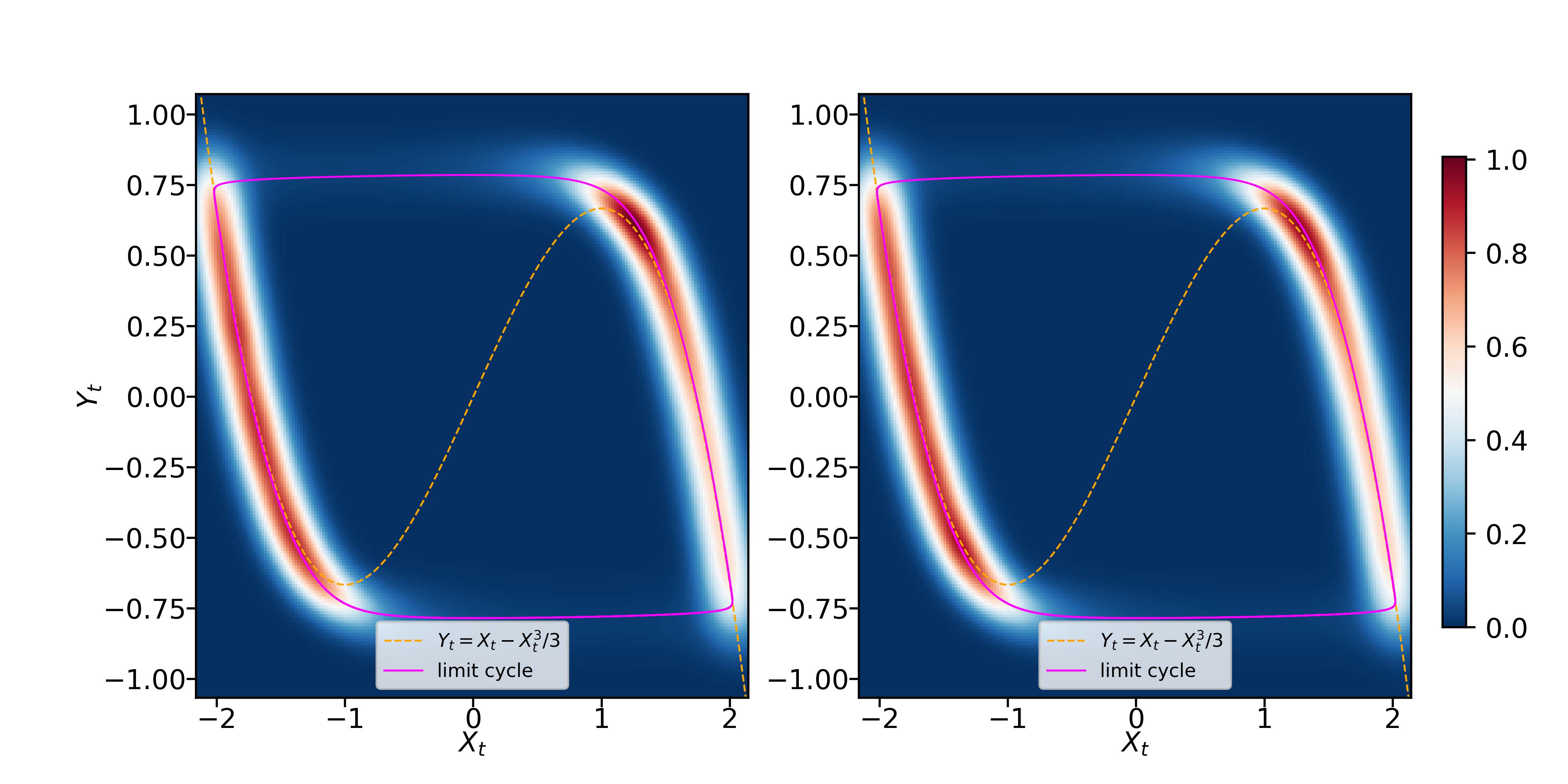}}
  \hspace{-0.2in}
  \subfigure[$W_2(\bm{X}_t,\tilde{\bm{X}}_t)$ and $D_{KL}(\tilde{\bm{X}}_t \| \bm{X}_t)$]{
    \label{fig: subfig: vdp wasser and kl} 
    \includegraphics[height=2.04in]{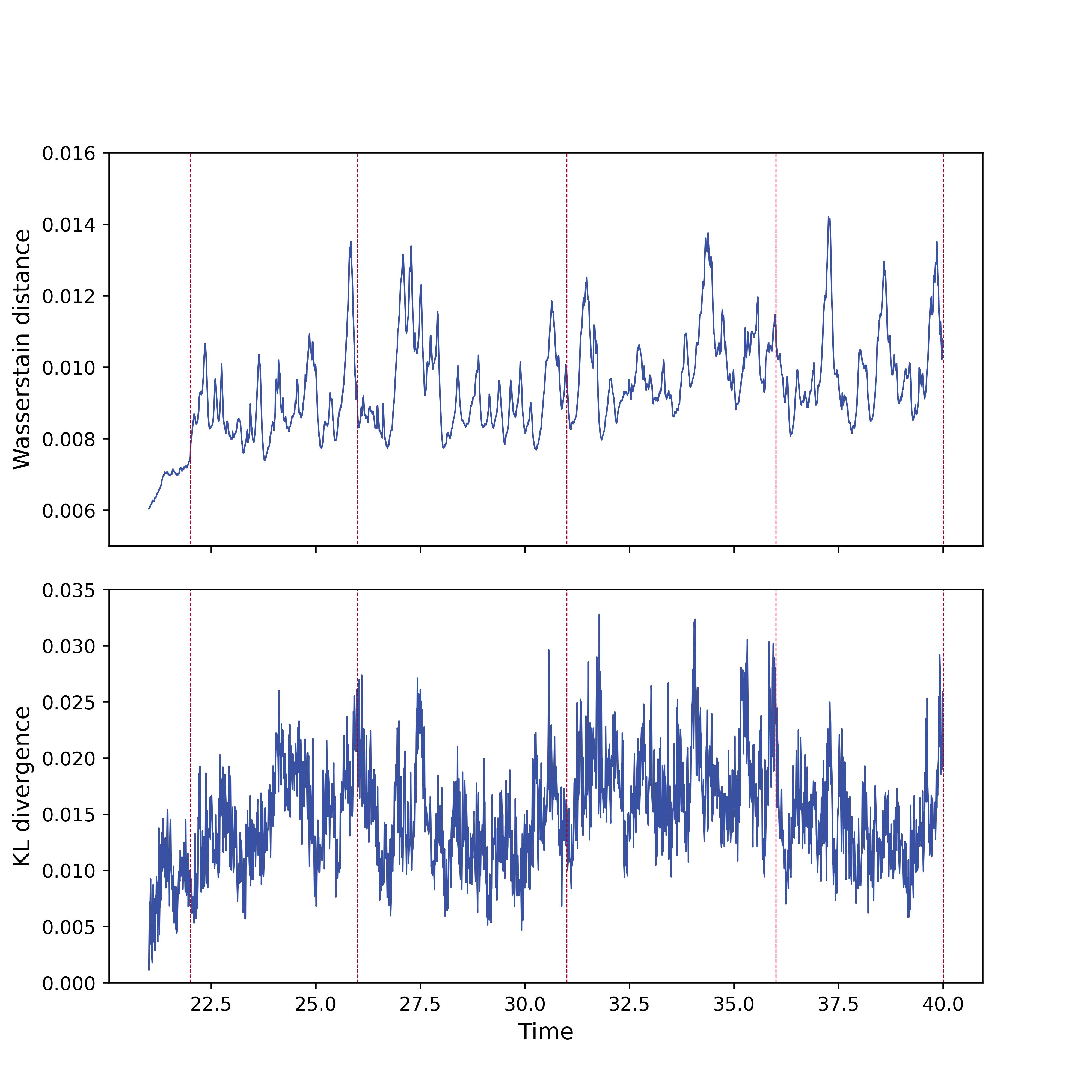}}
  \makebox[\textwidth][c]{\subfigure[PDFs. References (solid blue lines) and RC-NF results (red dashed lines) for several snapshots selected on the test dataset]{
    \label{fig: subfig: vdp pdf select} 
    \includegraphics[width=1.12\textwidth]{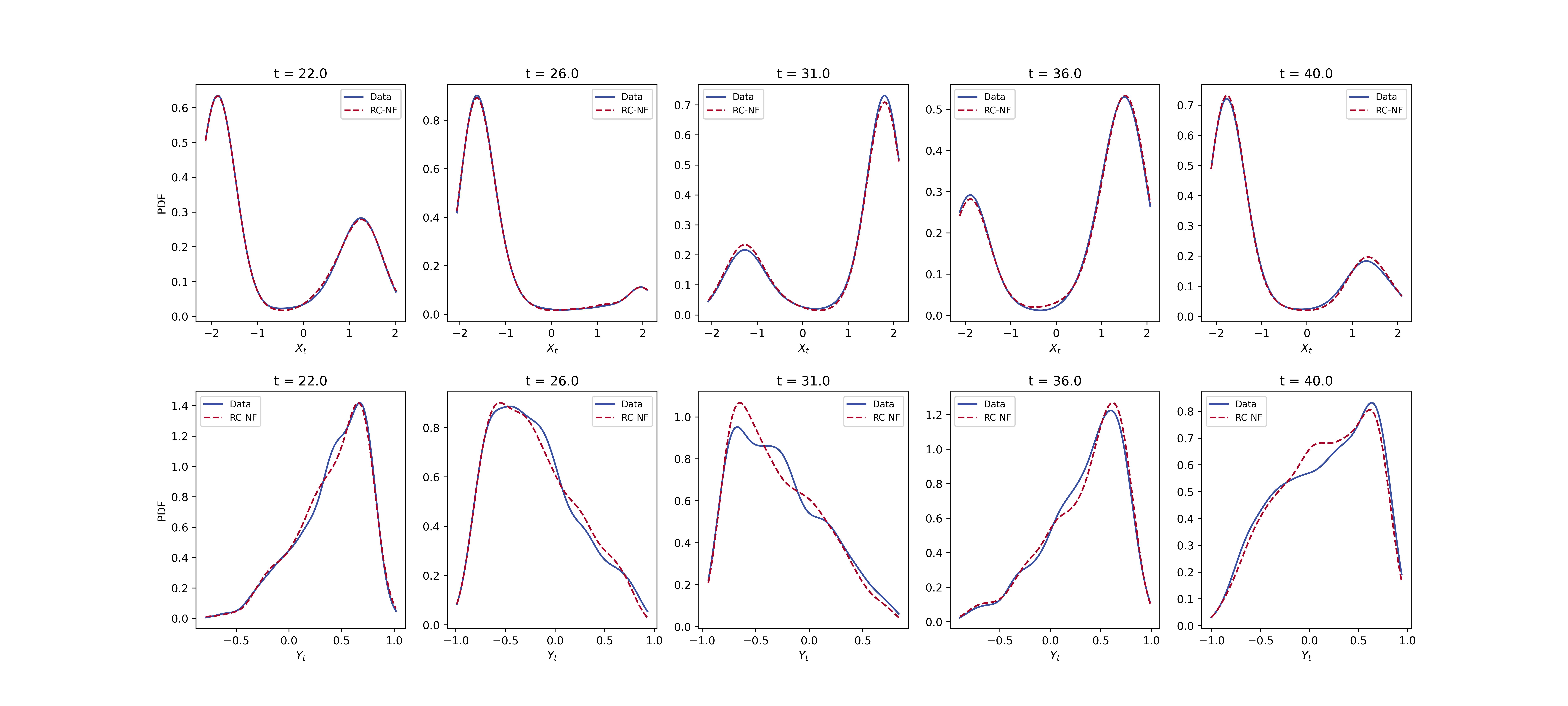}} } 
  \caption{Results of stochastic Van der Pol oscillator. PDFs of the trajectory data and the RC-NF rolling predictions are shown in (a). (b) Distributions of ($X_t$, $Y_t$) in testing. The solid magenta line represents the limit cycle, and the dashed yellow line represents the cubic nullcline $Y_t = X_t-X^3_t/3$. (c) Wasserstein distance (top) and KL divergence (bottom), where the notations $\bm{X}_t:= [X_t, Y_t]^T $ and $\tilde{\bm{X}}_t:= [\tilde{X}_t, \tilde{Y}_t]^T$. The red dashed lines represent snapshots where we draw the PDFs in (d) and display the values in Tables \ref{tab: wasser} and \ref{tab: kl}.}
  \label{fig: vdp heatmap,wass, kl, pdf} 
\end{figure}

Taking the initial value $[X_0, Y_0]^T = [1.5,1.0]^T$. Using the Euler-Maruyama scheme and the RC-NF model respectively, we generate a long trajectory with a length of 20500 and set observation time step $\Delta t=0.01$. The first 500 steps of the dataset are used to warm up for RC-NF. The results are displayed in Figs. \ref{fig: subfig: vdpx} and \ref{fig: subfig: vdpv}, both of which show trajectories of period-like motions.
\begin{figure}[!ht]
    \centering
    \subfigure[Empirical variances in test phase]{
    \includegraphics[width=0.33\textwidth]{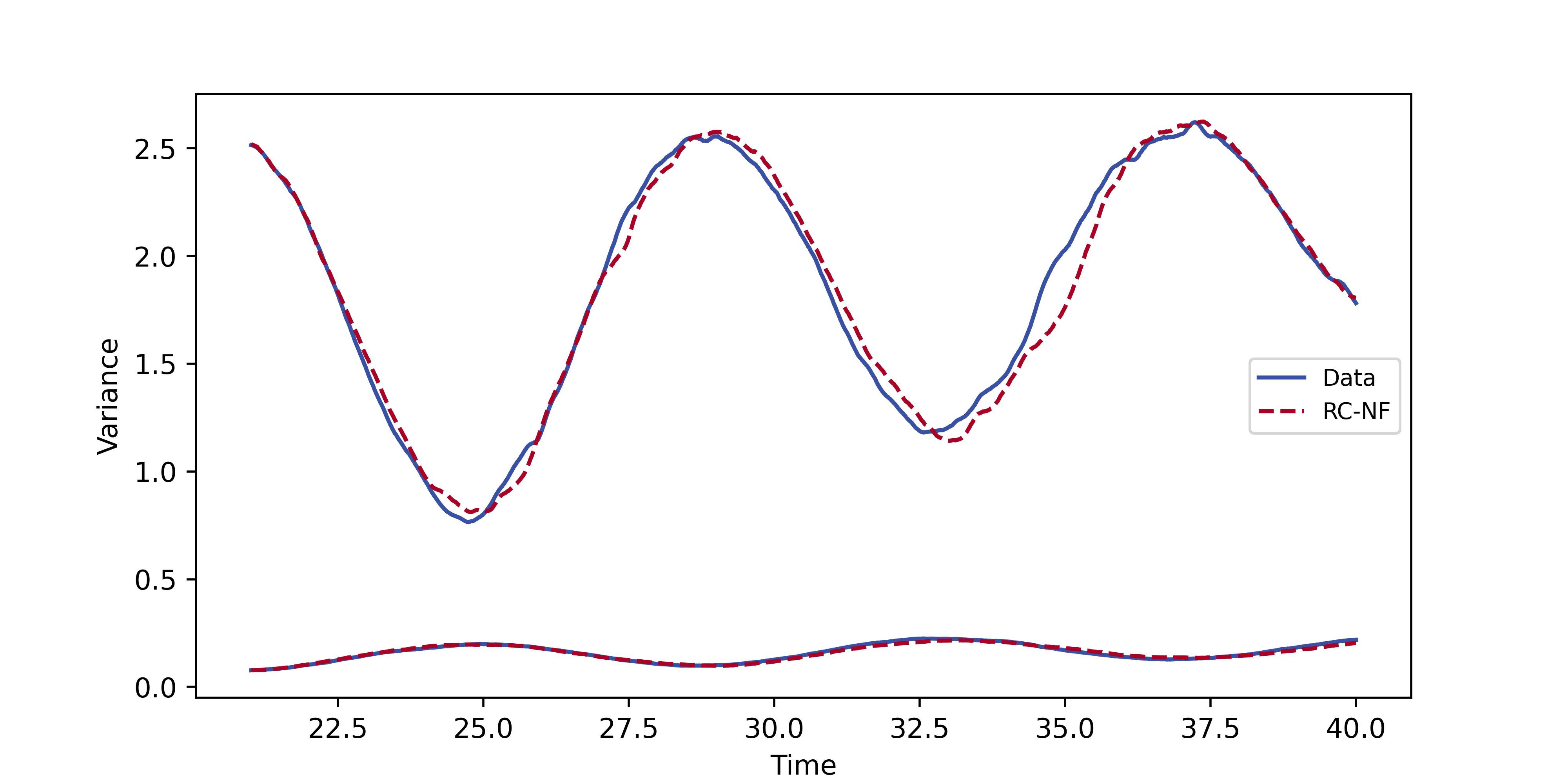}
    \label{fig: subfig: vdpvariance}
    }
    \hspace{-0.2in}
    \subfigure[The state variable $X_t$]{
    \includegraphics[width=0.33\textwidth]{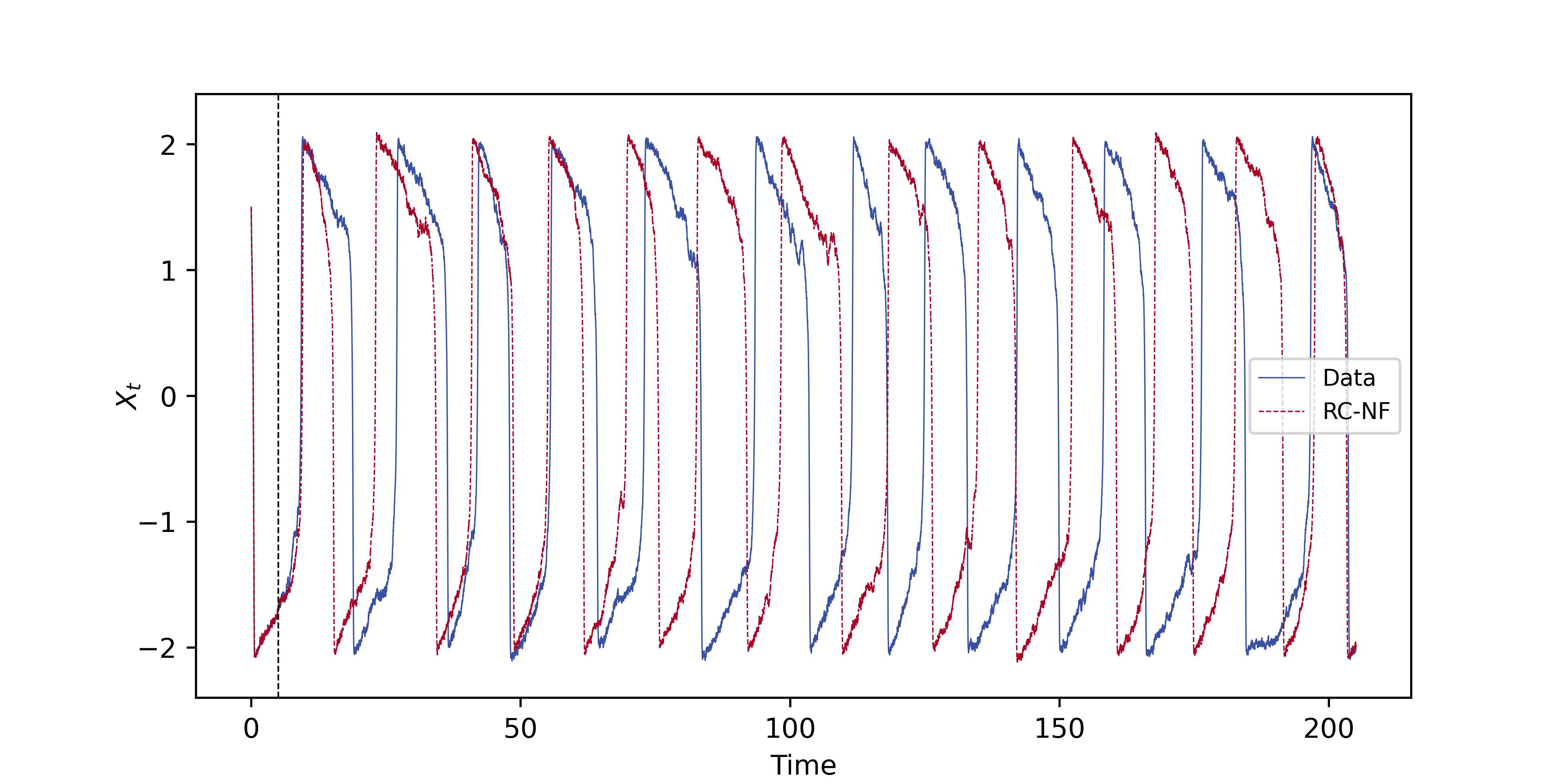}
    \label{fig: subfig: vdpx}
    }
    \hspace{-0.2in}
    \subfigure[The state variable $Y_t$]{
    \includegraphics[width=0.33\textwidth]{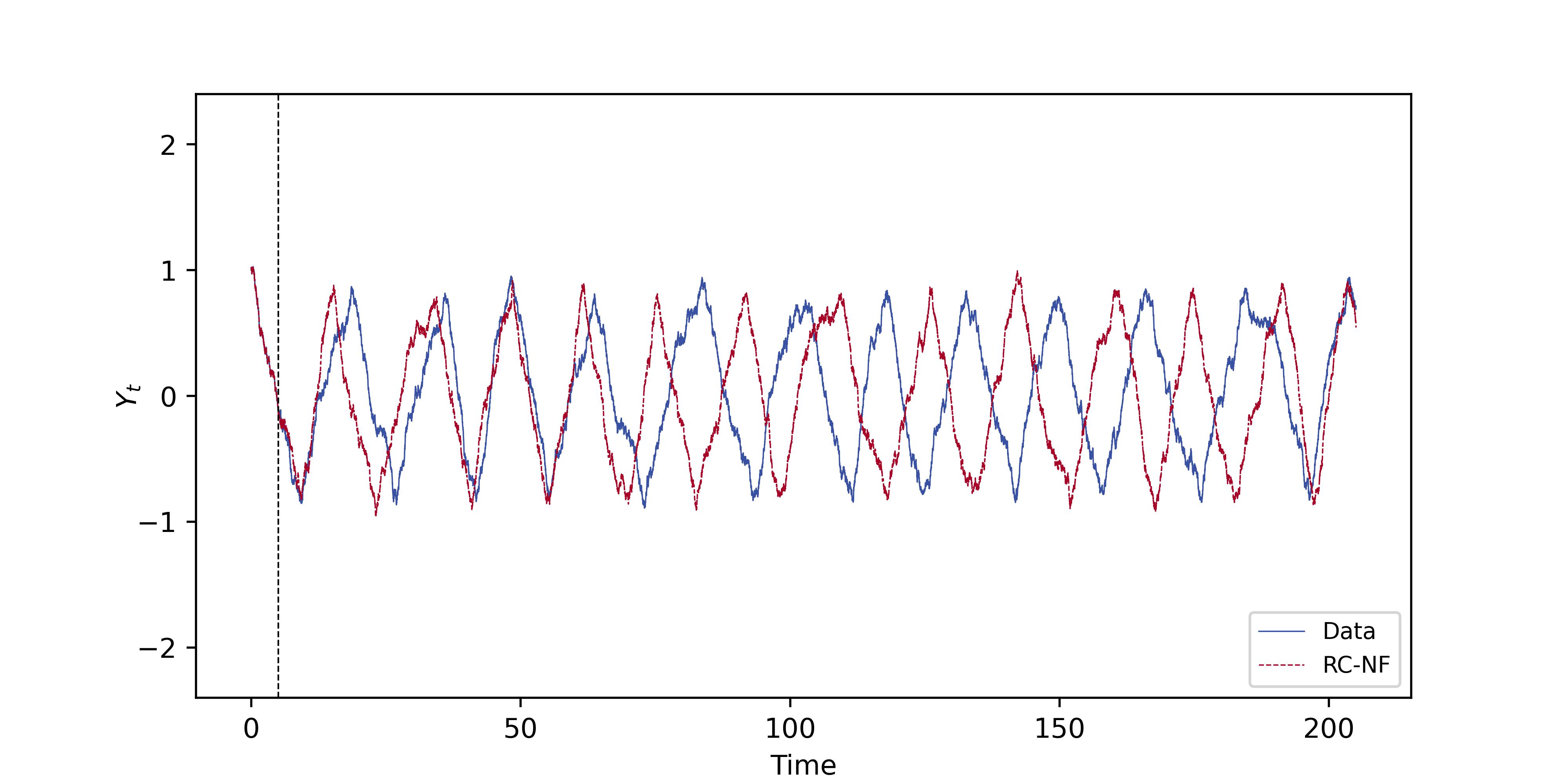}
    \label{fig: subfig: vdpv}
    }
    \caption{The solid blue lines represent data, and the red dashed lines represent the results of RC-NF. The gray dotted vertical line separates the trajectories into the warm-up stage and the generation stage. (a) Empirical variances during the testing phase. The larger empirical variances come from the state variable $X_t$. (b) Long trajectories of the state variable $X_t$. (c) Long trajectories of the state variable $Y_t$.}
    \label{fig: vdp one track}
\end{figure}

Due to the presence of noise, the trajectory of the Van der Pol system no longer strictly follows the periodic motion but still follows the pattern of relaxation oscillation. If we amplify the noise (e.g., taking diffusion coefficients $g^1=g^2=1.0$), we find that the variances of the state variables $[X_t, Y_t]^T$ no longer follow periodic oscillations, but trajectories of this system are still distributed around the limit cycle and the cubic nullcline $Y_t = X_t - X^3_t/3$. Nevertheless, the RC-NF framework still performs well (not shown in this paper).

\noindent $\bullet$ \textbf{Stochastic mixed-mode oscillation: the effects of small noise} \label{sec: exper: mmo}

Consider the following folding singularity with additive noise on $\mathbb{R}^2$:

\begin{equation} \label{eq: mmo}
\begin{cases}
    dX_t = 10(X_t - X_t^3/3 - Y_t)dt + g^1 dB^1_t,\\ 
    dY_t = (X_t+0.988)dt + g^2 dB^2_t,
\end{cases}
\end{equation}
where $g^1 = g^2 = 0.005$ are on the diagonal of the diffusion coefficient matrix $\bm{g}$ and $[B^1_t, B^2_t]^T$ is a two-dimensional standard Brownian motion. A stable limit cycle is present in the associated deterministic dynamics of the model (\ref{eq: mmo}). When the randomly perturbed trajectory passes a folding singularity, even small random perturbations cause the stochastic trajectory to ``jump" randomly between a small cycle and a large cycle. Such oscillation systems which are patterns consisting of alternating structures of small-amplitude and large-amplitude oscillations are called stochastic mixed-mode oscillations (stochastic MMOs). See Fig. \ref{fig: mmo one track}.

We set the time step for this system to be $\delta t = \Delta t = 0.01$ in accordance with the previous experiments. The initial value is $[X_0, Y_0]^T = [0.5, 0]^T$. Using the Euler-Maruyama scheme, we generate 1000 trajectories with a total length of 4000, where the training length $T$, verification length $T_{valid}$, and prediction length $T_{test}$ are 2000, 100, and 1900 respectively. The first 100 steps of training data are used for warm-up.

Fig. \ref{fig: mmo heatmap,wass, kl, pdf} displays the results of rolling predictions generated by the RC-NF model. The probability density functions of the trajectory data and the rolling predictions of RC-NF on the test dataset are shown in Fig. \ref{fig: subfig: mmoheatmap}. The predicted results of RC-NF successfully reproduce jagged-like probability density functions over time. Fig. \ref{fig: subfig: mmomanifoldpdf} displays the distributions of the data and prediction results of the RC-NF during the test phase in the two-dimensional space, which both indicate that most data points and predicted values are gathered near the large and small cycles. Fig. \ref{fig: subfig: mmo wasser and kl} displays Wasserstein distances and KL divergences between the reference distributions and the estimated distributions at different times $t$ in testing; the red dashed lines represent the selected snapshots, whose probability density functions and specific values are displayed in Fig. \ref{fig: subfig: mmo pdf select}, Tables  \ref{tab: wasser} and \ref{tab: kl}, respectively. These results demonstrate the effectiveness of our proposed RC-NF framework. 
\begin{figure}[!ht]
  \hspace{0.1in}
  \centering 
  \subfigure[PDFs over time. From left to right: Reference ($X_t$), RC-NF ($X_t$), Reference ($Y_t$) and RC-NF ($Y_t$)]{
    \label{fig: subfig: mmoheatmap} 
    \includegraphics[height=1.65in]{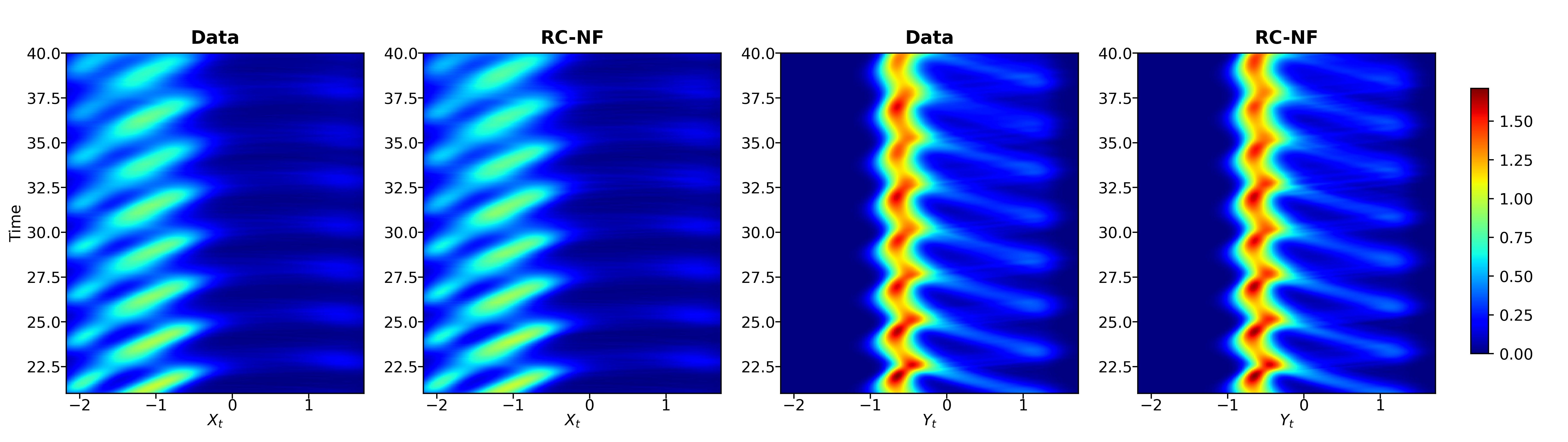}}
  \subfigure[Distributions of ($X_t$, $Y_t$) in testing. Reference (left) and RC-NF (right)]{
    \label{fig: subfig: mmomanifoldpdf} 
    \includegraphics[height=1.98in]{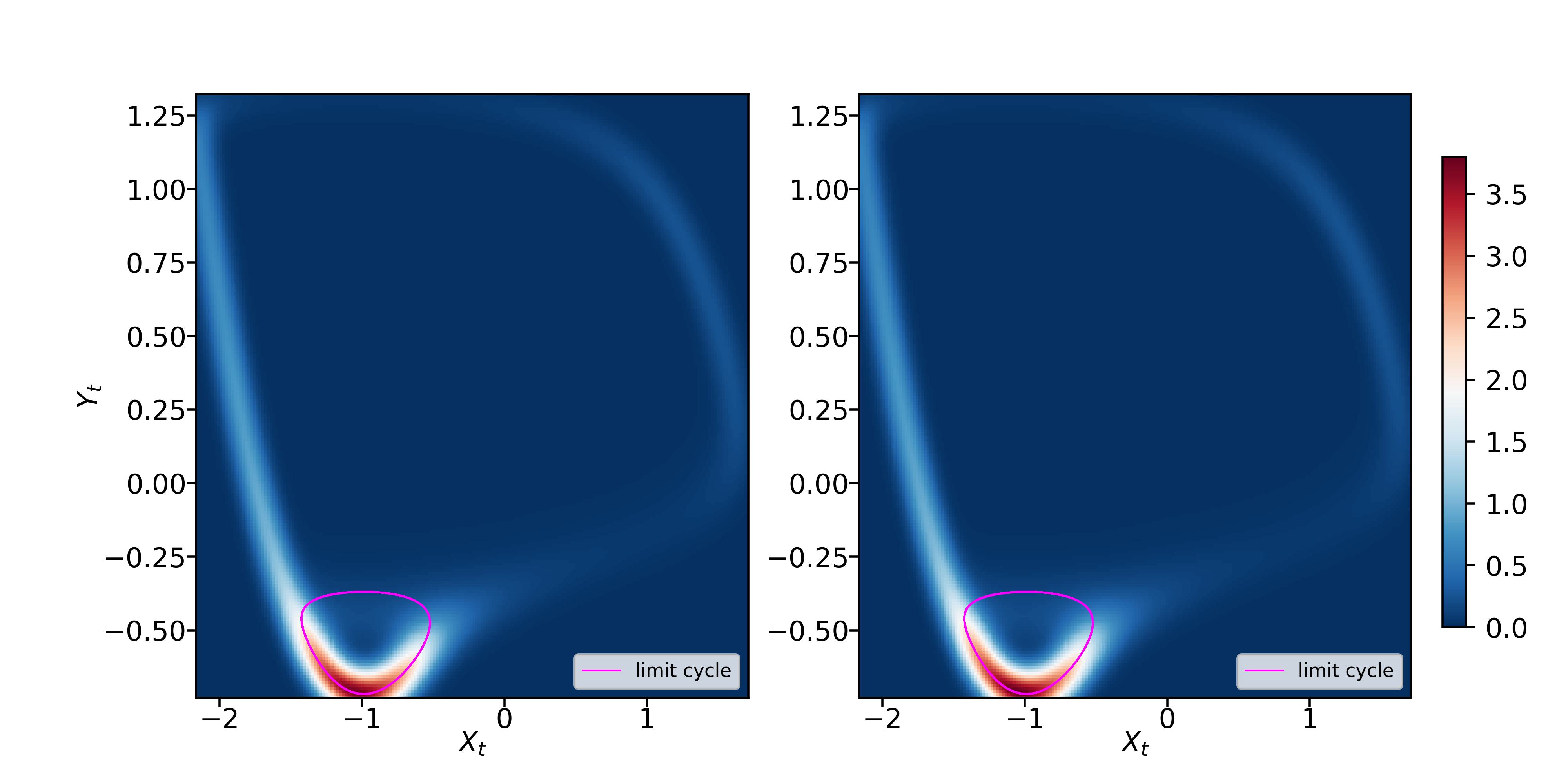}}
  \hspace{-0.2in}
  \subfigure[$W_2(\bm{X}_t,\tilde{\bm{X}}_t)$ and $D_{KL}(\tilde{\bm{X}}_t \| \bm{X}_t)$]{
    \label{fig: subfig: mmo wasser and kl} 
    \includegraphics[height=2.04in]{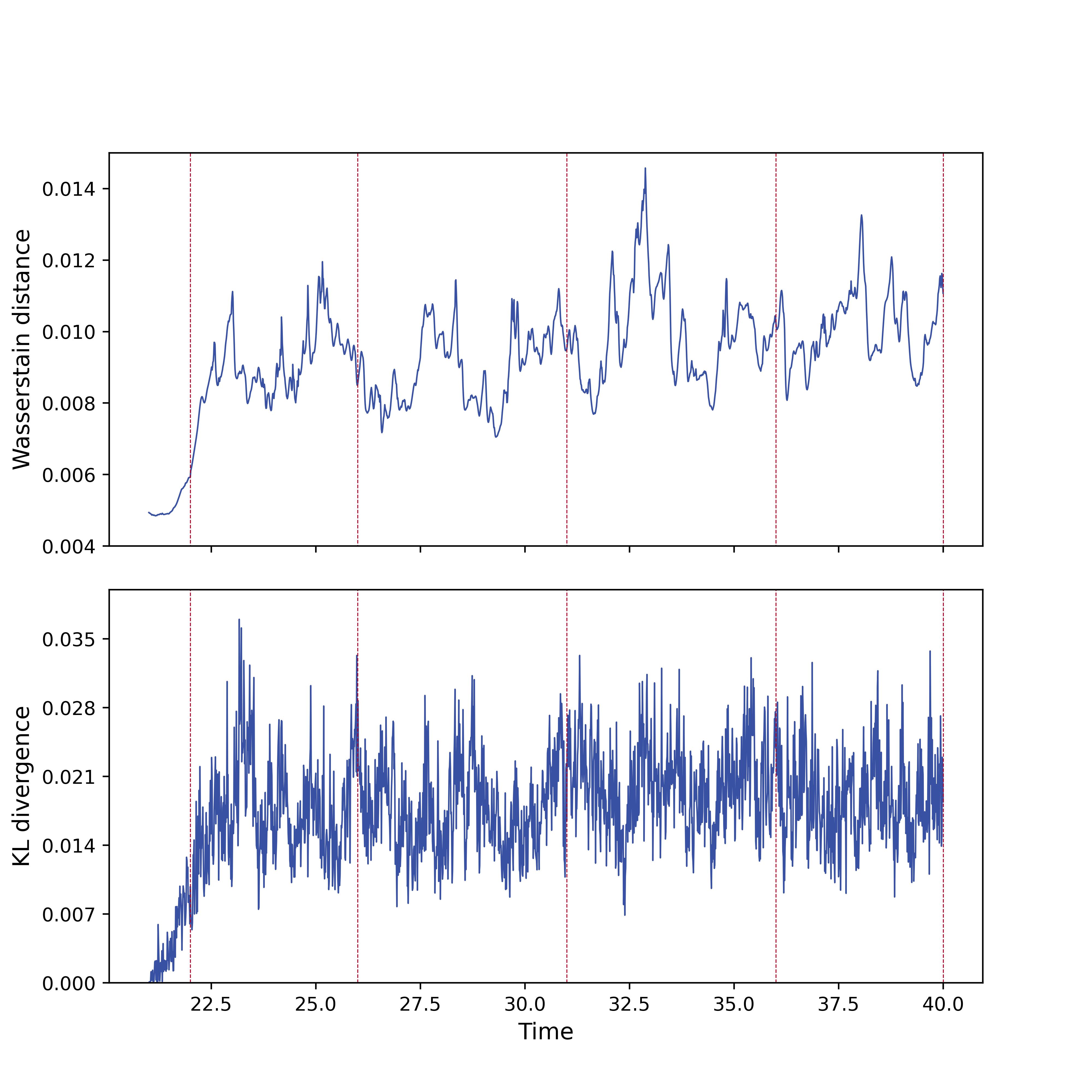}}
  \makebox[\textwidth][c]{\subfigure[PDFs. References (solid blue lines) and RC-NF results (red dashed lines) for several snapshots selected on the test dataset]{
    \label{fig: subfig: mmo pdf select} 
    \includegraphics[width=1.12\textwidth]{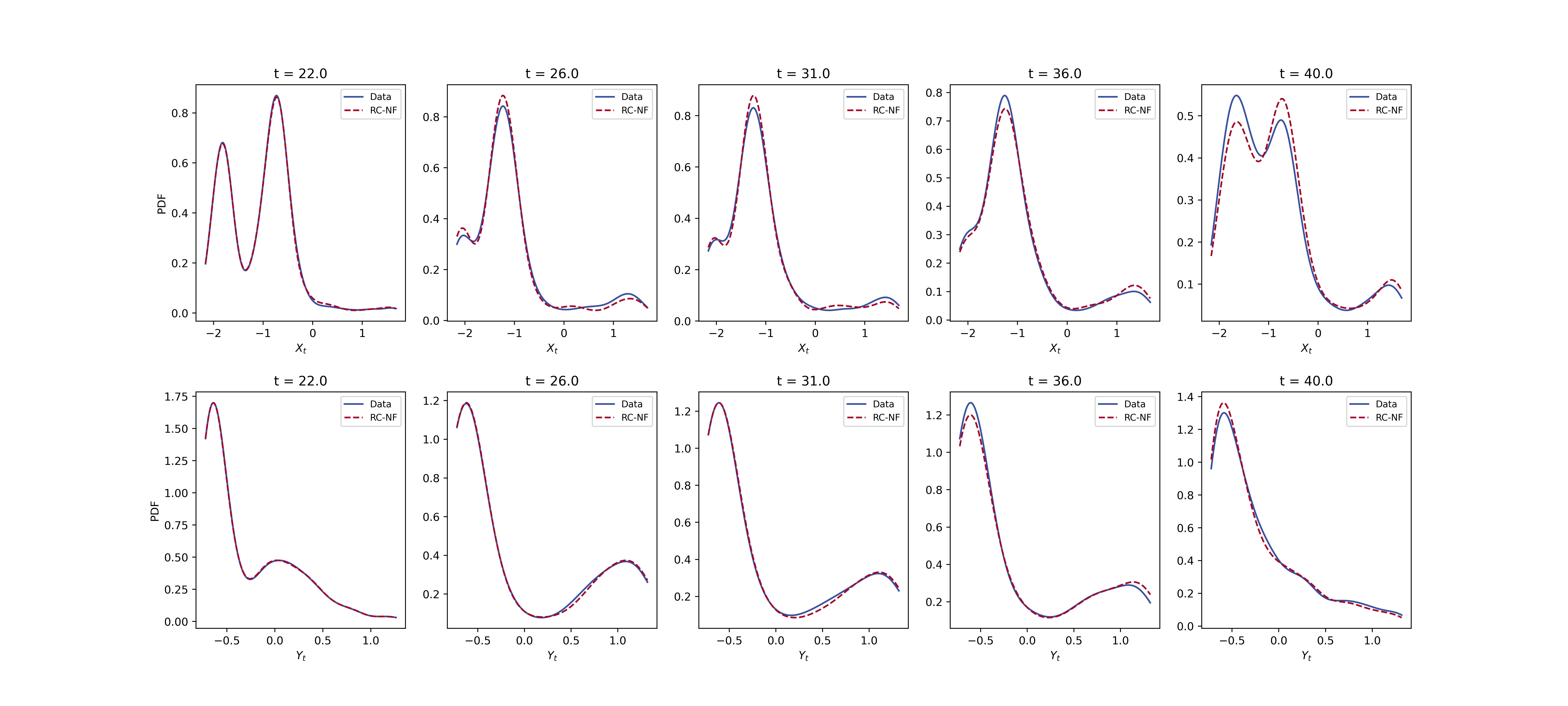}} } 
  \caption{Results of stochastic mixed-mode oscillation. PDFs of the trajectory data and the RC-NF rolling predictions are shown in (a). (b) Distributions of ($X_t$, $Y_t$) in testing. The solid magenta line represents the limit cycle. (c) Wasserstein distance (top) and KL divergence (bottom), where the notations $\bm{X}_t:= [X_t, Y_t]^T $ and $\tilde{\bm{X}}_t:= [\tilde{X}_t, \tilde{Y}_t]^T$. The red dashed lines represent snapshots where we draw the PDFs in (d) and display the values in Tables \ref{tab: wasser} and \ref{tab: kl}.}
  \label{fig: mmo heatmap,wass, kl, pdf} 
\end{figure}

Taking the initial value $[X_0, Y_0]^T = [-1,0.5]^T$. To visualize stochastic mixed-mode oscillation, we generate a long trajectory using the Euler-Maruyama scheme and the RC-NF model respectively with a length of 20500 and set observation time step $\Delta t=0.01$. The first 500 steps of the dataset are used to warm up for the RC-NF model. Fig. \ref{fig: mmo one track} shows a trajectory of the deterministic system, long trajectories of the state variable $X_t$, and long trajectories of the state variable $Y_t$, from left to right. On the one hand, there is a significant difference between Fig. \ref{fig: subfig: mmoode} and Fig. \ref{fig: subfig: mmox}, \ref{fig: subfig: mmoy}. On the other hand, both the data and the RC-NF model display patterns of stochastic mixed-mode oscillations.
\begin{figure}[!ht]
    \centering
    \subfigure[The deterministic system]{
    \includegraphics[width=0.33\textwidth]{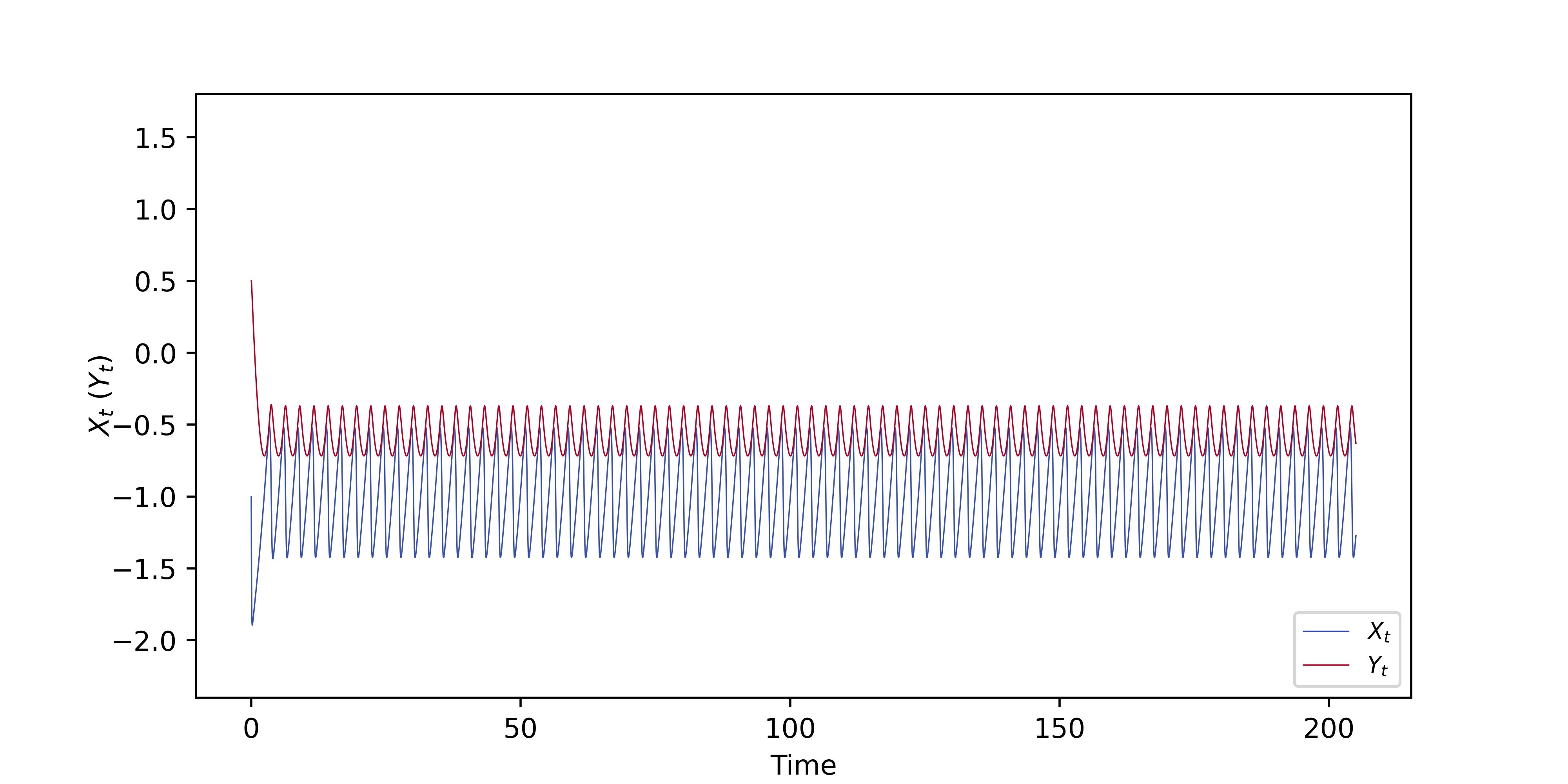}
    \label{fig: subfig: mmoode}
    }
    \hspace{-0.2in}
    \subfigure[The state variable $X_t$]{
    \includegraphics[width=0.33\textwidth]{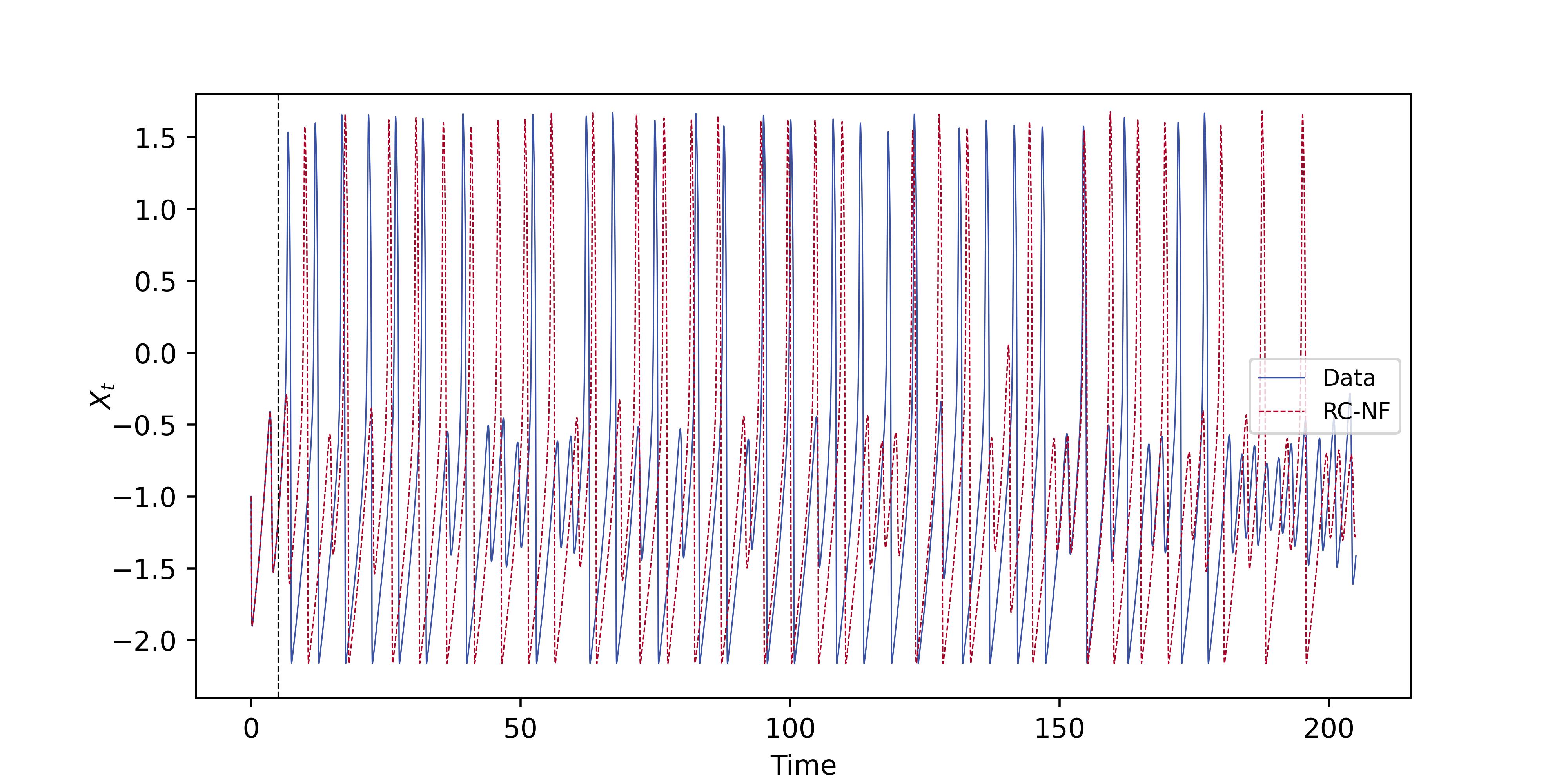}
    \label{fig: subfig: mmox}
    }
    \hspace{-0.2in}
    \subfigure[The state variable $Y_t$]{
    \includegraphics[width=0.33\textwidth]{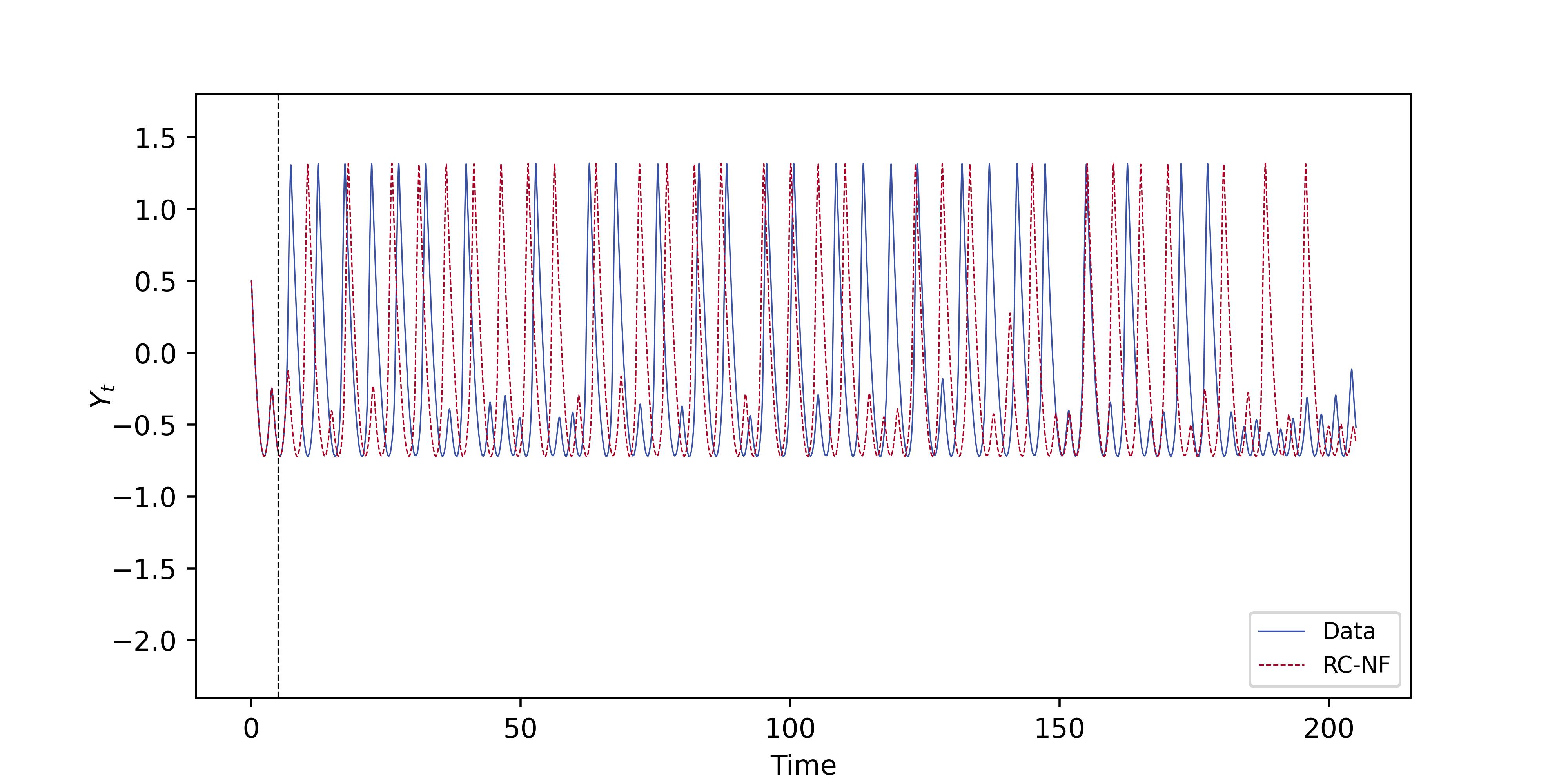}
    \label{fig: subfig: mmoy}
    }
    \caption{(a) A trajectory from the deterministic system. In (b) and (c), the solid blue lines represent data, and the red dashed lines represent the results of RC-NF. The warm-up stage and the generation stage of the trajectories are separated by the gray dotted vertical line. (b) Long trajectories of the state variable $X_t$. (c) Long trajectories of the state variable $Y_t$.}
    \label{fig: mmo one track}
\end{figure}

The RC-NF approach successfully reproduces the stochastic mixed-mode oscillation. For the examples of stochastic Van der Pol oscillator and stochastic mixed-mode oscillation, the probability density functions over time during the test phase are significantly more complex than in the other experiments. While the RC-NF model shows its effectiveness.

\subsection{The effects on stochastic differential equations with time delays}\label{sec: exper: sdde}
The solutions of the aforementioned examples (Sec. \ref{sec: exper: ou}, Sec. \ref{sec: exper: richer dynamical behaviors}) are Markov processes. Stochastic processes arising from solutions of SDDEs (\ref{eq: sdde}) typically lack the Markov property. As we mentioned in Sec. \ref{sec: related work}, various data-driven approaches have been proposed for the analysis of stochastic dynamical systems. However, due to the complexity of the system and specific model assumptions, most existing methods cannot be extended to analyze SDDEs from data. At present, there is relatively little research on this type of system. We will explore two experiments in this section to verify the learning ability of RC-NF for stochastic dynamical systems defined by SDDEs. We can solve explicitly the first linear SDDE to gain an intuitive understanding. The El Ni\~no-Southern Oscillation simplified model is more complex and has the background of real-world applications.

\noindent$\bullet$ \textbf{Linear SDDE: long-term prediction of a non-Markov process} \label{sec: exper: linear sdde}

Consider the following SDDE with a linear coefficient \cite{rihan2021delay}:

\begin{equation} \label{eq: linear sdde}
\begin{cases}
    dX_t = \mu_0 X_{t-\tau_0}dt + g dB_t,\quad \text{for}\  t \geq 0,\\
    X_t = t+1,\quad \text{for}\  t \in [-\tau_0, 0], 
\end{cases}
\end{equation}
where $\mu_0 \in \mathbb{R}$, time delay $\tau_0 > 0$, the diffusion coefficient $g$ is a positive constant, and $B_t$ is a scalar Brownian motion. We take $\mu_0 = -1.2$, $\tau_0 = 1$, and $g = 1$. Under our setting, the existence and uniqueness of the solution of the above SDDE (\ref{eq: linear sdde}) are guaranteed \cite{rihan2021delay}. Based on the above parameter assumptions, we use the It\^o formula to obtain the solution on the time interval $[0,2]$:
\begin{equation}\label{eq: linearsdde solution}
X_t =
\begin{cases}
    1 - 0.6 t^2 + B_t, \quad \text{for}\ t \in [0,1],\\
    0.2(t-1)^2 - t + 1.4 + \int_1^t B_{s-1}ds + B_t, \quad \text{for}\ t \in [1,2].
\end{cases}
\end{equation}
Note that although this linear SDDE (\ref{eq: linear sdde}) is formally similar to the OU process (\ref{eq: ouprocess}), their solutions are significantly different from each other.

For this system, we follow the time step setting from the previous examples, that is, $\delta t = \Delta t = 0.01$. The initial value is $X_0 = 1$, which is determined by equation (\ref{eq: linear sdde}). We generate 2000 trajectories with a total length of 2000 using the Euler-Maruyama scheme, where the training length $T$, verification length $T_{valid}$, and prediction length $T_{test}$ are 1000, 100, and 900 respectively. The first 100 steps of training data are used for warm-up. 

The results of rolling predictions of the RC-NF model are shown in Fig. \ref{fig: linearsdde heatmap,wass, kl, pdf}. Fig. \ref{fig: subfig: sddelinearheatmap} depicts the probability density functions of the trajectory data and the rolling predictions of RC-NF on the test dataset. The results of RC-NF rolling predictions mimic the fluctuation of the probability density function over time. Fig. \ref{fig: subfig: sddelinear wasser and kl} displays Wasserstein distances and KL divergences between the reference distributions and the estimated distributions at different times $t$ in the test phase; the red dashed lines represent the selected snapshots, whose probability density functions and specific values are displayed in Fig. \ref{fig: subfig: sddelinear pdf select}, Tables  \ref{tab: wasser} and \ref{tab: kl}, respectively. These results demonstrate the effectiveness of the RC-NF model we proposed and its ability to predict the non-Markov linear SDDE over a long time. 
\begin{figure}[!ht]
  \centering
  \hspace{-0.2in} 
  \subfigure[PDFs over time. Reference (left) and RC-NF (right)]{
    \label{fig: subfig: sddelinearheatmap} 
    \includegraphics[height=1.98in]{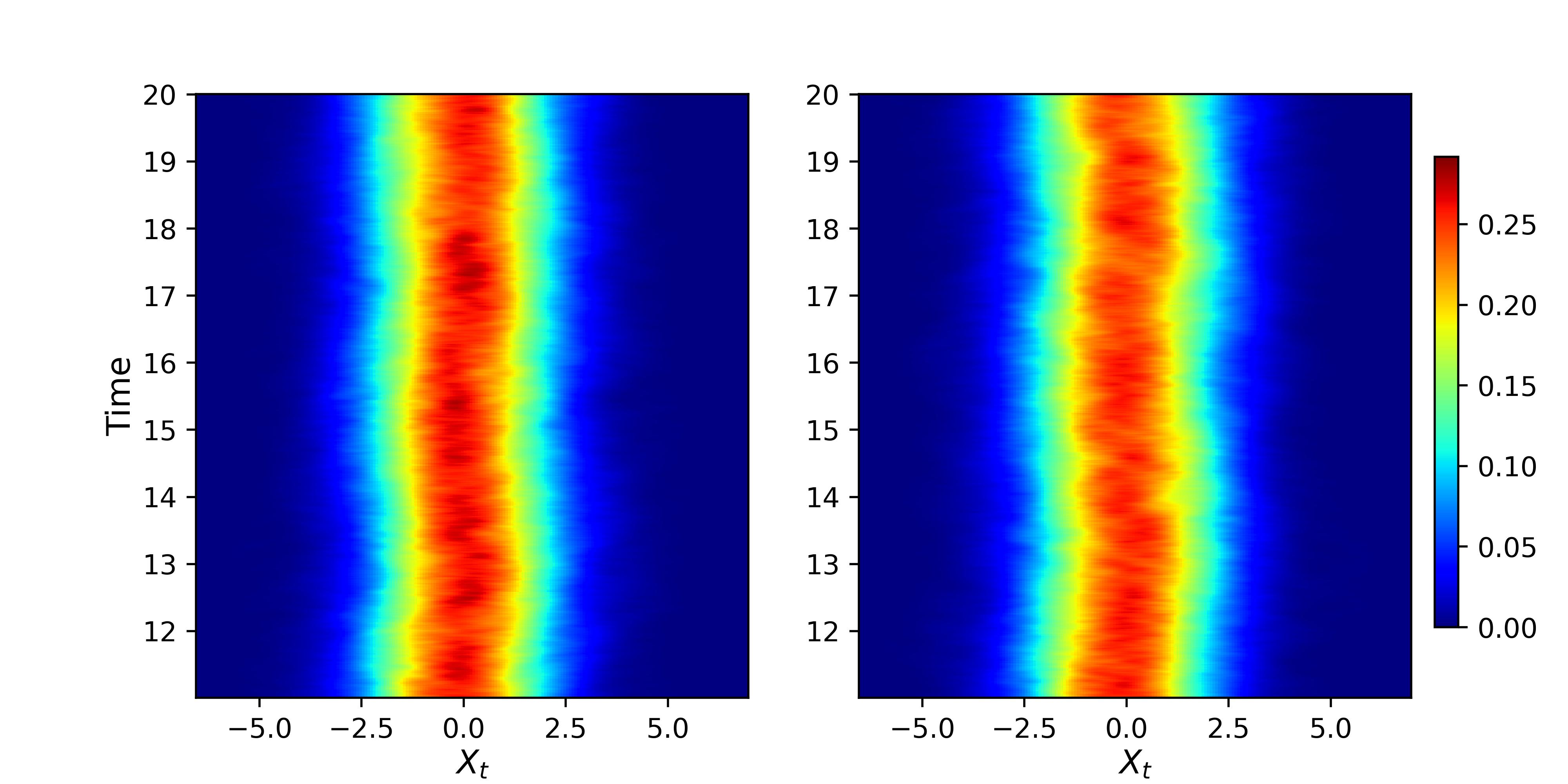}}
  \hspace{-0.1in}
  \subfigure[$W_2(\bm{X}_t,\tilde{\bm{X}}_t)$ and $D_{KL}(\tilde{\bm{X}}_t \| \bm{X}_t)$]{
    \label{fig: subfig: sddelinear wasser and kl} 
    \includegraphics[height=2.04in]{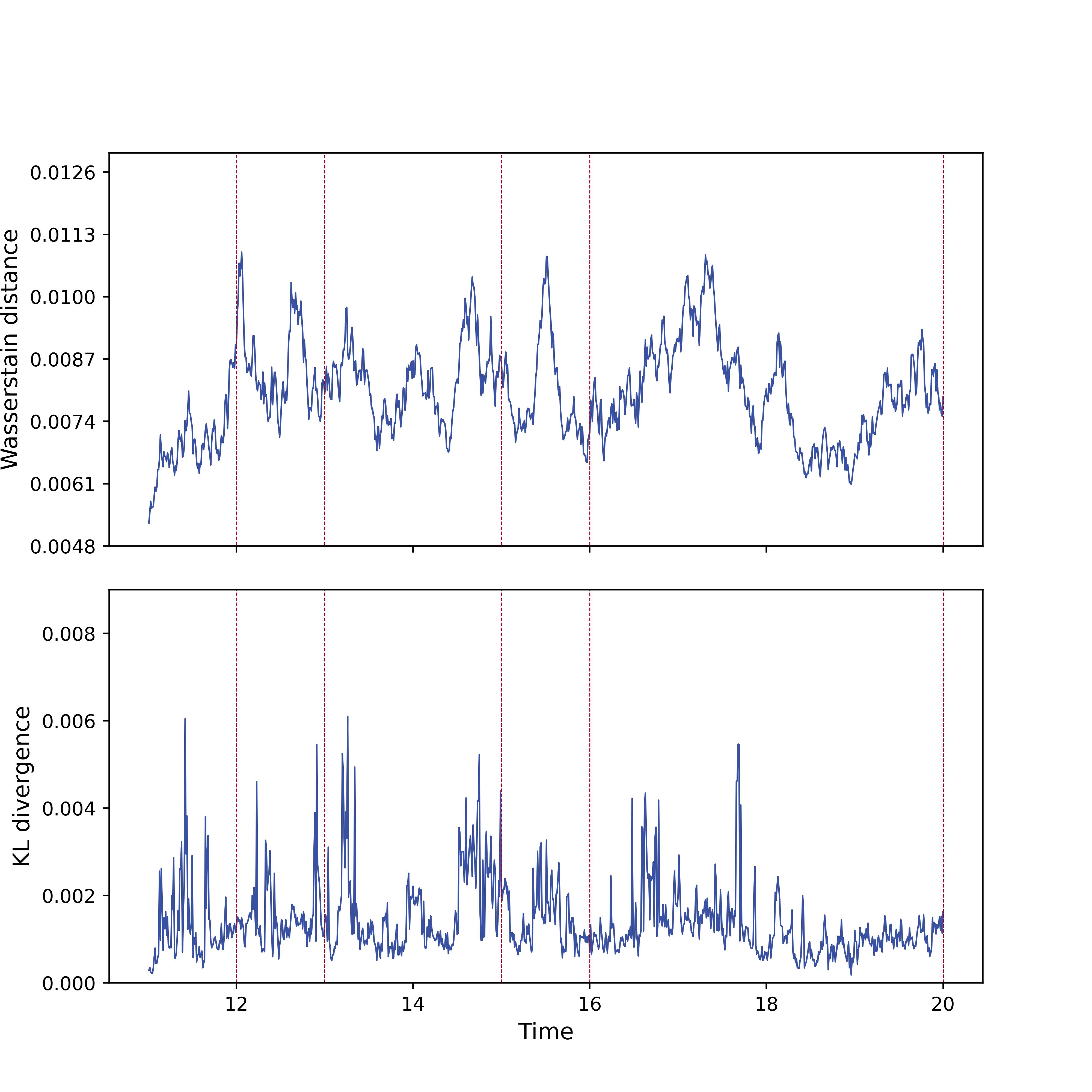}}
  \makebox[\textwidth][c]{\subfigure[PDFs. References (solid blue lines) and RC-NF results (red dashed lines) for several snapshots selected on the test dataset]{
    \label{fig: subfig: sddelinear pdf select} 
    \includegraphics[width=1.12\textwidth]{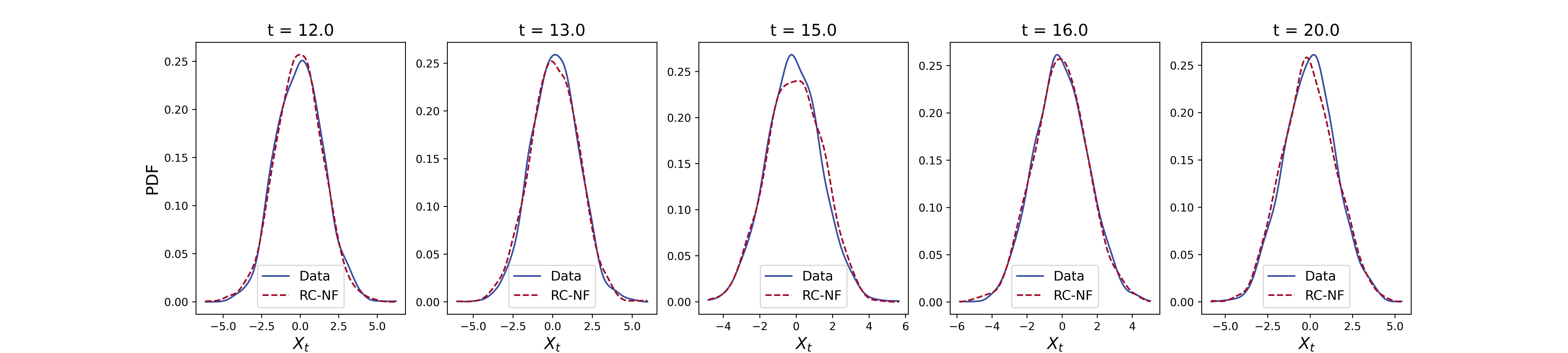}} } 
  \caption{Results of the linear SDDE. PDFs of the trajectory data and the RC-NF rolling predictions are shown in (a). (b) Wasserstein distance (top) and KL divergence (bottom), the red dashed lines represent snapshots where we draw the PDFs in (c) and display the values in Tables \ref{tab: wasser} and \ref{tab: kl}.}
  \label{fig: linearsdde heatmap,wass, kl, pdf} 
\end{figure}

Although it is a simple linear SDDE (\ref{eq: linear sdde}), it essentially differs from the previous examples defined by SDEs. However, our proposed RC-NF achieves almost the same level of accuracy in predicting the long-term evolution of the linear SDDE.

\noindent$\bullet$ \textbf{El Ni\~no-Southern Oscillation (ENSO) simplified model}\label{sec: exper: esno}

Consider the El Ni\~no-Southern Oscillation (ENSO) simplified model \cite{suarez1988delayed,Sun2022} with additive noise that is presented below,
\begin{equation}\label{eq: enso}
\begin{cases}
    dX_t = (X_t-X_t^3-\alpha_0 X_{t-\tau_0})dt+gdB_t,\quad \text{for}\ t \geq 0,\\
    X_t = C_0,\quad \text{for}\ t \in [-\tau_0, 0], 
\end{cases}
\end{equation}
where $|\alpha_0|<1$ measures the influence of the returning signal relative to that of the local feedback, $\tau_0$ is the time delay corresponding to wave transit time, the constant $C_0\in \mathbb{R}$, the diffusion coefficient $g$ is also a positive constant, and $B_t$ is a scalar Brownian motion. Note that the deterministic model corresponding to (\ref{eq: enso}) has two stable fixed points at $X_t=(1-\alpha_0)^{1/2}$ and $X_t=-(1-\alpha_0)^{1/2}$, and one unstable fixed point at $X_t=0$. In accordance with \cite{suarez1988delayed,Sun2022}, we set $\alpha_0 = 0.75$ and $\tau_0=6$.

For this system, we follow the time step setting from the previous examples, that is, $\delta t = \Delta t = 0.01$. The initial values $X_0 = C_0$, $C_0$ are drawn from a uniform distribution on $[-1,1]$. We generate 2000 trajectories with a total length of 4000 using the Euler-Maruyama scheme, where the training length $T$, verification length $T_{valid}$, and prediction length $T_{test}$ are 2000, 100, and 1900 respectively. The first 100 steps of training data are used for warm-up.

Noting that the probability distribution of state $X_t$ of this system depends on the initial value $C_0$, we fix the initial value $X_0 = -0.1$ and generate new trajectories to explore the performance of the proposed method. The trained RC-NF model and the Euler-Maruyama scheme both generate a total of 2000 trajectories, respectively. The first 500 steps of the dataset are used for warm-up.

The comparison results of the trajectory data and the trajectories generated by the RC-NF model are shown in Fig. \ref{fig: esno heatmap,wass, kl, pdf}. More specifically, Fig. \ref{fig: subfig: esnoheatmap} depicts the probability density functions of the trajectory data and trajectories generated by the RC-NF model. As can be seen from the figure, the RC-NF model successfully captures the change in probability density over time, with its highest peak alternating with time. Fig. \ref{fig: subfig: esno wasser and kl} displays Wasserstein distances and KL divergences between the reference distributions and the estimated distributions at different times $t$ in testing; the red dashed lines represent the selected snapshots, whose probability density functions and specific values are displayed in Fig. \ref{fig: subfig: esno pdf select}, Tables  \ref{tab: wasser} and \ref{tab: kl}, respectively. Despite there being a slight difference in peaks, we successfully learn the bimodal probability density function of the system at a fixed time. Although the transition rate $k_{AB}$ is no longer approximately constant for $\tau_{mol} < t \ll \tau_{rxn}$, as shown in Sec. \ref{sec: exper: dw},  we can still calculate the time correlation function $C_{AB}(t)/C_A$ to assess the quality of the RC-NF model, the relevant results are shown in Fig. \ref{fig: subfig: esnokba}. The reference time correlation function and the estimated function by the RC-NF model exhibit a similar trend. Our proposed method is still effective for the ESNO simplified model.
\begin{figure}[!ht]
  \centering
  \hspace{-0.2in} 
  \subfigure[PDFs over time. Reference (left) and RC-NF (right)]{
    \label{fig: subfig: esnoheatmap} 
    \includegraphics[height=1.98in]{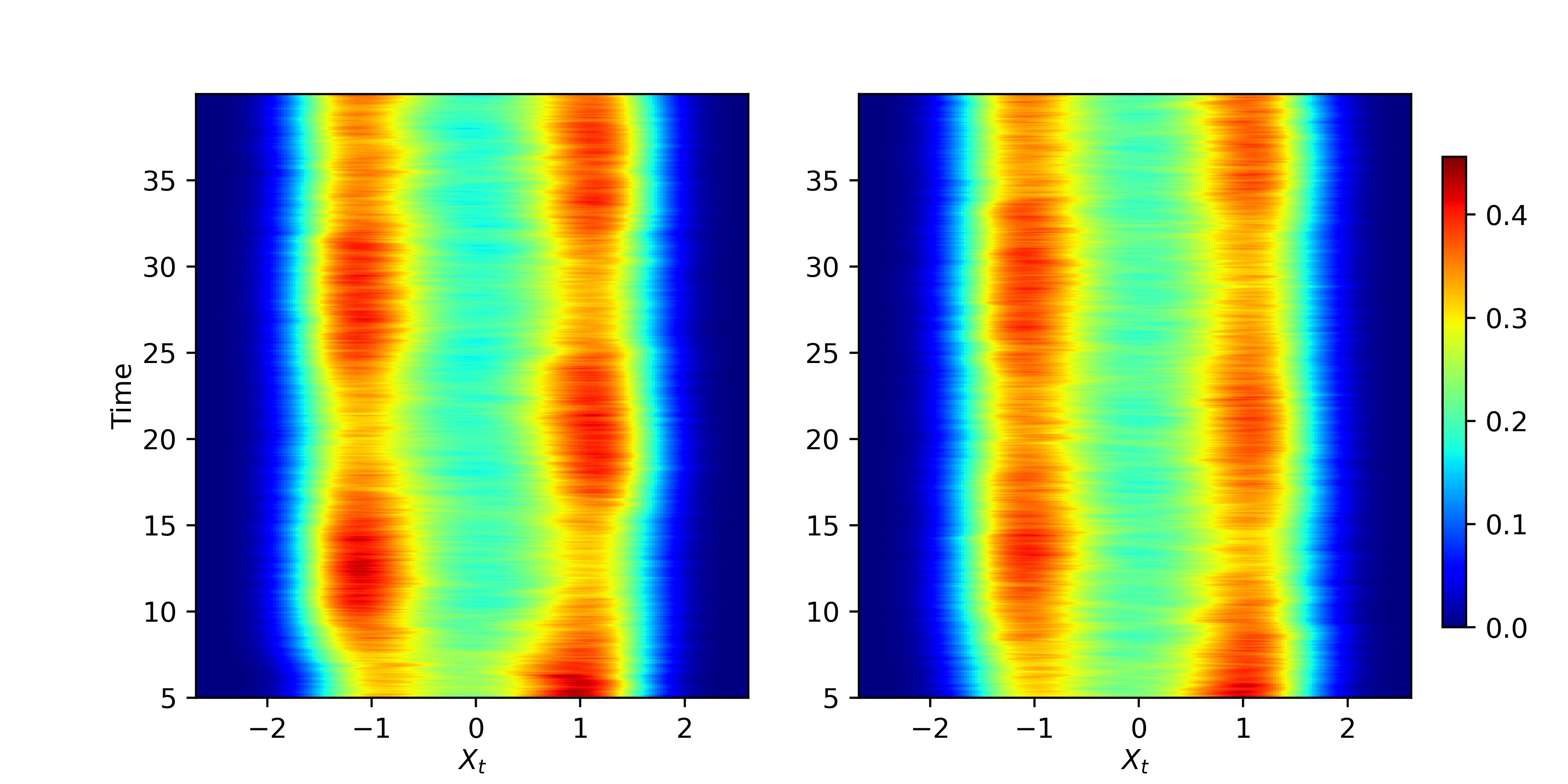}}
  \hspace{-0.1in}
  \subfigure[$W_2(\bm{X}_t,\tilde{\bm{X}}_t)$ and $D_{KL}(\tilde{\bm{X}}_t \| \bm{X}_t)$]{
    \label{fig: subfig: esno wasser and kl} 
    \includegraphics[height=2.04in]{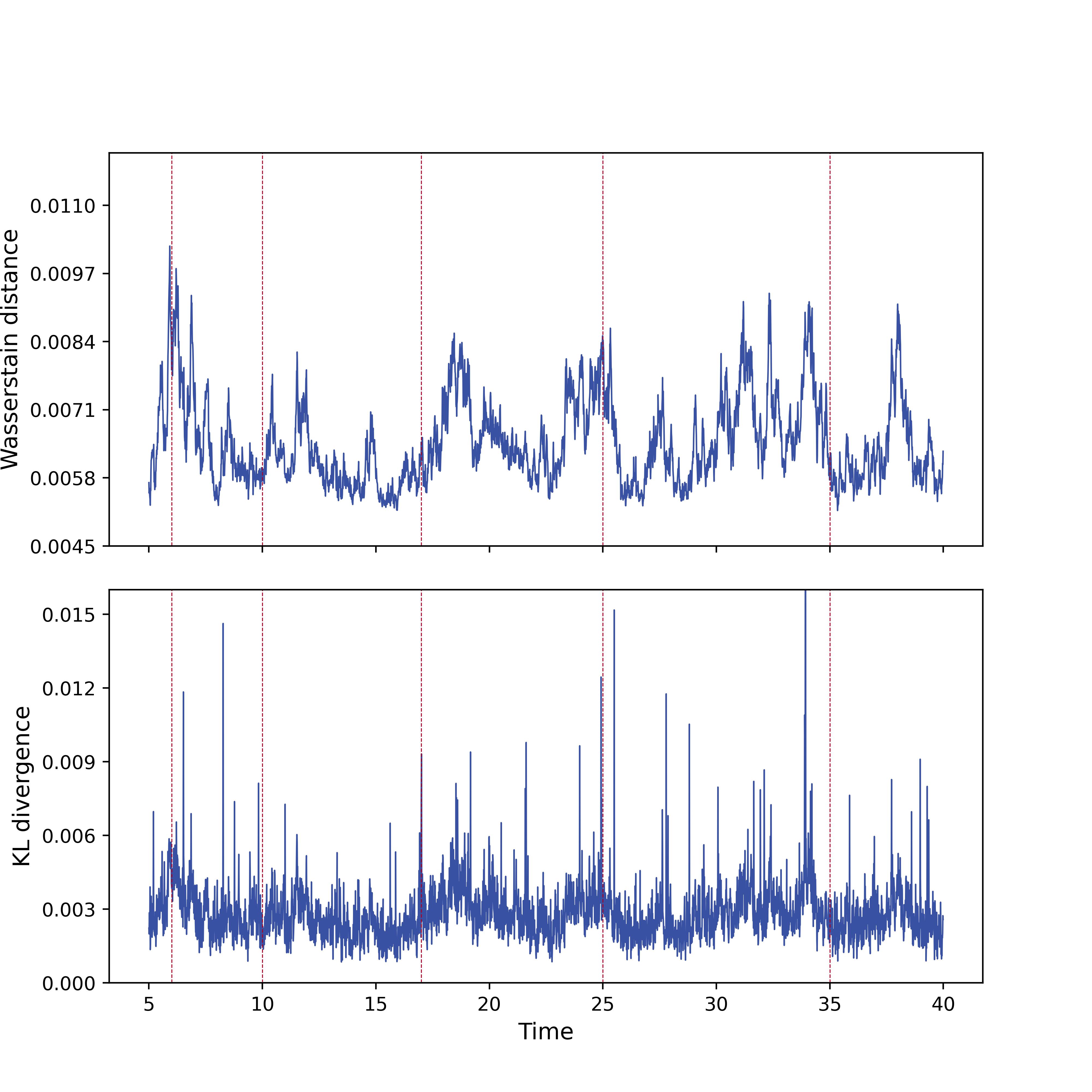}}
  \makebox[\textwidth][c]{\subfigure[PDFs. Reference (solid blue lines) and RC-NF results (red dashed lines) for several snapshots selected on the test dataset]{
    \label{fig: subfig: esno pdf select} 
    \includegraphics[width=1.12\textwidth]{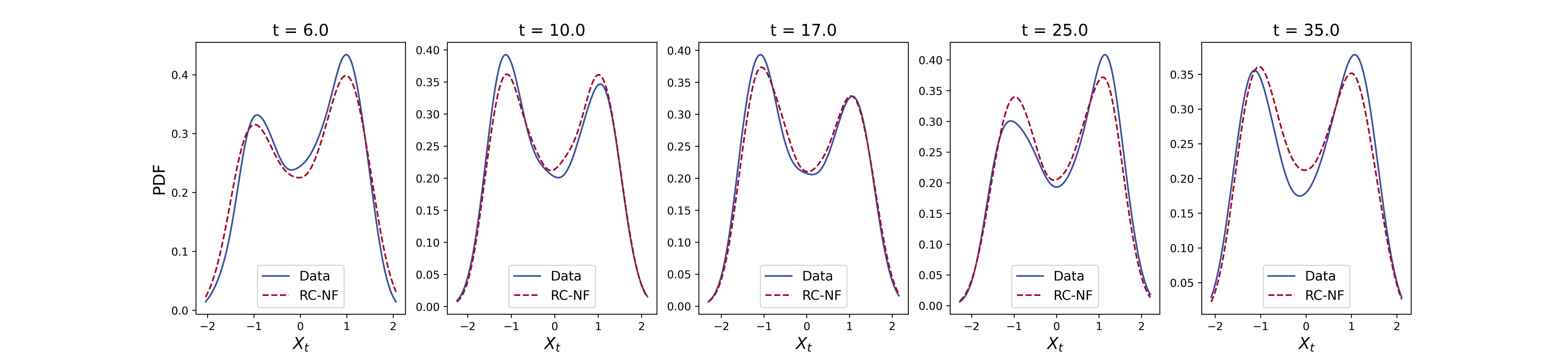}} }
  \subfigure[Time correlation function $C_{AB}(t)$]{
    \includegraphics[width=0.6\textwidth]{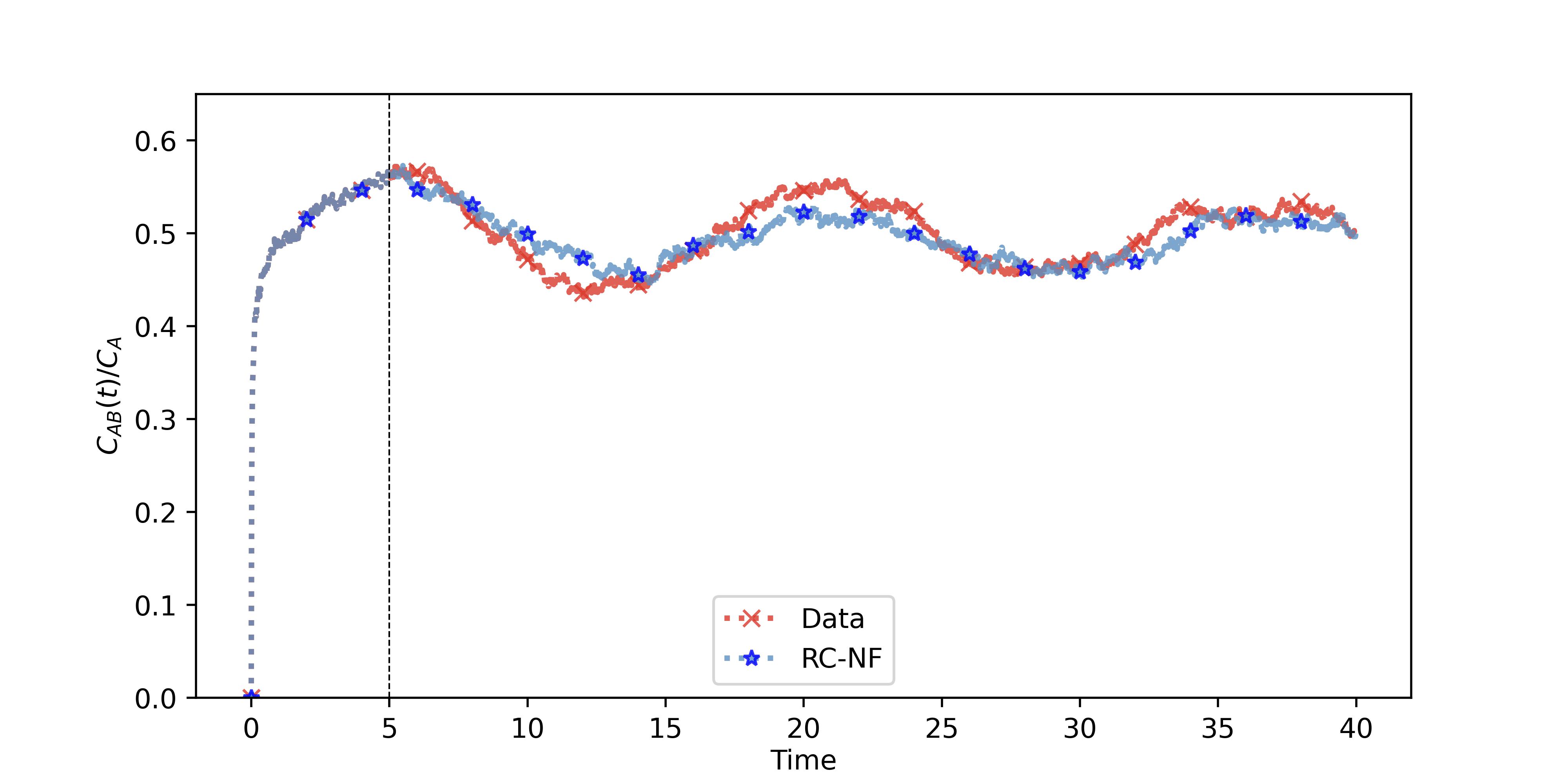}
    \label{fig: subfig: esnokba}}
  \caption{Comparison results of ENSO simplified model. PDFs of the trajectory data and the RC-NF rolling predictions are shown in (a). (b) Wasserstein distance (top) and KL divergence (bottom) at different times $t$, the red dashed lines represent snapshots where we draw the PDFs in (c) and display the values in Tables \ref{tab: wasser} and \ref{tab: kl}. (d) Time correlation function $C_{AB}(t)$, the red cross dot line is calculated from the trajectory data, and the blue asterisk line is calculated from trajectories generated by the RC-NF model. The black dotted vertical line represents the split between the warm-up stage and the generation stage.}
  \label{fig: esno heatmap,wass, kl, pdf} 
\end{figure}

The ESNO simplified model, whose corresponding SDDE (\ref{eq: enso}) is nonlinear and whose solution is a non-Markov, non-stationary stochastic process, is the most complex one-dimensional stochastic dynamical system we consider in this article. Its evolution depends on initial values and time delay. Nevertheless, RC-NF successfully predicts the evolution of state variables in the probability distribution sense within the tolerable error and remains competitive in some qualitative and quantitative analyses.

\subsection{Stochastic Lorenz system: further comparison of RC and RC-NF}\label{sec: exper: stochastic lorenz}
Lorenz system \cite{Lorenz63} is notable for having chaotic solutions depending on the parameter values and initial conditions. This demonstrates how physically deterministic systems can still have some inherent unpredictability. Moreover, the necessity of stochasticity in the climate system and some related studies can be found in \cite{palmer2019stochastic, AGARWAL2016142, Tapio2021}. One of the stochastic forms of Lorenz system describes by the following nonlinear SDEs:
\begin{equation} \label{eq: lorenz}
\begin{cases}
    dX_t = \sigma_0(Y_t - X_t) dt + g^1dB^1_t, \\
    dY_t = \left(X_t(\rho_0-Z_t)-Y_t\right) dt + g^2dB^2_t,\\
    dZ_t = (X_tY_t - \beta_0Z_t) dt + g^3dB^3_t,
\end{cases}
\end{equation}
where $g^1$, $g^2$, $g^3$ are positive constants on the diagonal of the diffusion coefficient matrix $\bm{g}$ and $[B^1_t,B^2_t,B^3_t]^T$ is a three-dimensional standard Brownian motion. The Prandtl Number $\sigma_0$, the Rayleigh Number $\rho_0$, and a domain geometric factor $\beta_0$ are control parameters. We select $\sigma_0 = 10$, $\rho_0 = 28$, $\beta = 8/3$, and $g^1=g^2=g^3=3$. 

For this system, we set the time step of the Euler-Maruyama scheme $\delta t =10^{-5} $ and the observation step $\Delta t = 0.01$. The initial value is $[X_0, Y_0, Z_0]^T = [0,1,0]^T$. We generate 1000 trajectories with a total length of 4000 using the Euler-Maruyama scheme, where the training length $T$, validation length $T_{valid}$, and prediction length $T_{test}$ are 2000, 100, and 1900 respectively, and the first 100 steps of training data are used for warm-up. The generated sample trajectories are standardized, i.e., $X_t^{(m)}(\text{new}) = \left(X_t^{(m)}- \text{Mean}_{t,m}(X_t^{(m)})\right)/\text{Std}_{t,m}(X_t^{(m)})$. The variables $Y_t^{(m)}(\text{new})$ and $Z_t^{(m)}(\text{new})$ are dealt with in similar ways.

RC can also replicate the strange attractor of the Lorenz system, so the main task of this subsection is to compare the performance of RC and RC-NF in the stochastic Lorenz system. We first plot the trajectory data in the testing phase, the trajectories predicted by the RC model, and the trajectories predicted by the RC-NF model in three-dimensional space, as shown in Fig. \ref{fig: subfig: lorenzattractor}. Unsurprisingly, the trajectories of all three exhibit butterfly-like strange attractors. Fig. \ref{fig: subfig: lorenzpdf} depicts the probability density functions of the trajectory data and the rolling predictions of RC and RC-NF on the test dataset. The predicted probability density functions of RC-NF and the probability density functions of the trajectory data are similar, but the predicted probability density functions of RC are quite different, although it exhibits a butterfly-like strange attractor. Due to the ergodicity of the Lorenz system, we calculate the Wasserstein distances and KL divergences between the datasets (trajectory data, rolling predictions of the RC model, and rolling predictions of the RC-NF model) composed of all states in testing, which are displayed in Table \ref{tab: lorenz wasser and kl}. Results from RC-NF outperform those from RC.
\begin{figure}[!ht]
    \centering
    \subfigure[Trajectories in testing, references (left), generated by RC (middle), generated by RC-NF (right)]{
    \includegraphics[width=0.98\textwidth]{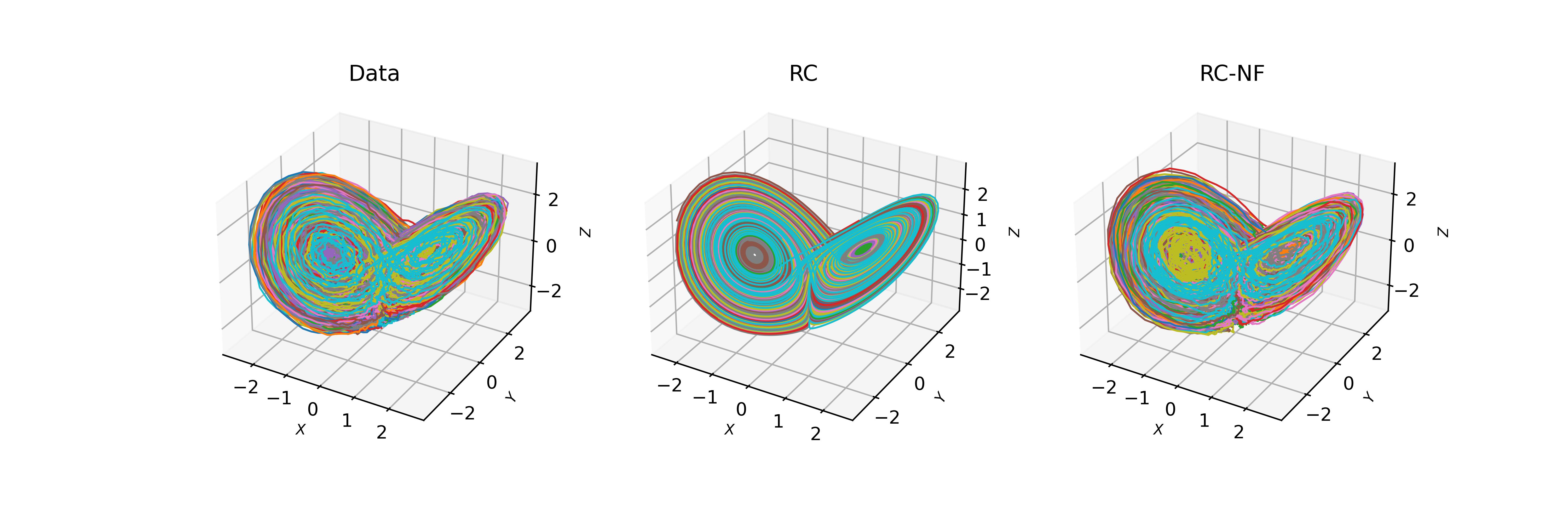}
    \label{fig: subfig: lorenzattractor}
    }
    \hspace{-0.2in}
    \subfigure[Marginal PDFs over time. References (top), RC (middle), and RC-NF (bottom)]{
    \includegraphics[width=0.98\textwidth]{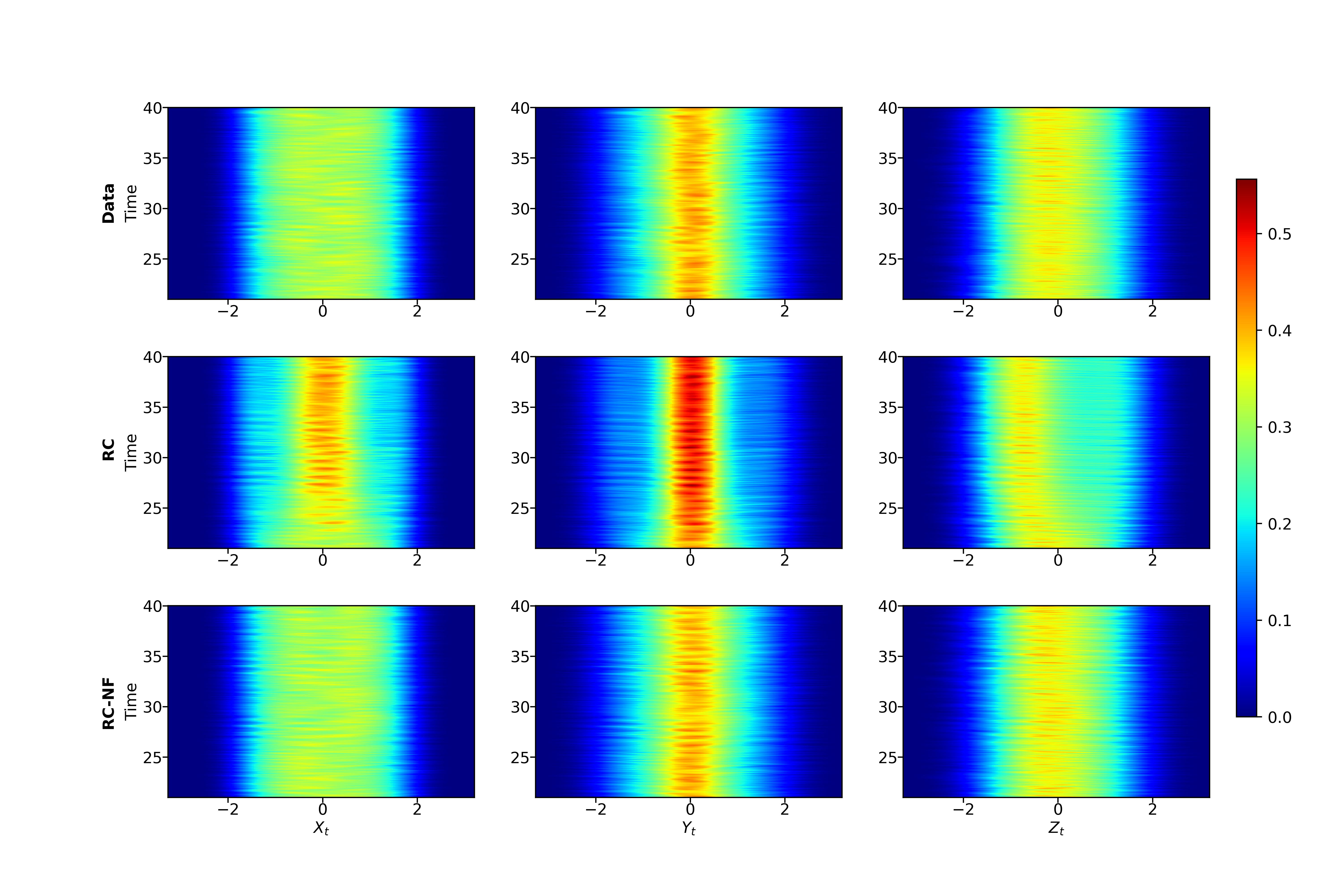}
    \label{fig: subfig: lorenzpdf}
    }
    \caption{Data, rolling predictions of RC and RC-NF on the test dataset for stochastic Lorenz system. (a) Strange attractors; (b) PDFs in the testing phase.}
    \label{fig: lorenz prediction}
\end{figure}

\begin{table}[!ht] 
\caption{Stochastic Lorenz model: Wasserstein distances and KL divergences are calculated for RC and RC-NF predictions, respectively.}
\label{tab: lorenz wasser and kl}
\centering
\begin{tabular}{l|cc} 
\bottomrule
\textbf{Criteria} & \textbf{RC} & \textbf{RC-NF}\\
\hline
Wasserstein distance & $4.7918\times 10^{-2}$ & $2.6603\times 10^{-2}$\\
KL divergence & $4.6666\times 10^{-4}$ & $7.5146\times 10^{-5}$\\
\toprule
\end{tabular}
\end{table}

We use two criteria to further characterize the difference between RC and RC-NF to highlight the performance of RC-NF. The metric approach is maximum Lyapunov exponents \cite{pathak2018PhysRevLett, rosenstein1993practical, pathak2017lyapunov} and the topological approach is close returns \cite{AGARWAL2016142, MINDLIN1992229}. The Lyapunov exponent of a dynamical system is a quantity that characterizes how quickly two infinitesimally similar trajectories separate from one another. The largest one is called the maximal Lyapunov exponent (MLE). A positive MLE is typically seen as a criterion of chaos in the system. Using the technique described in \cite{rosenstein1993practical}, MLEs are computed for each trajectory in testing. MLEs of each trajectory in the dataset, generated by the RC model, and generated by the RC-NF model, are shown in the boxplots in Fig. \ref{fig: subfig: lorenzMLE}. Similar to the results of the probability density functions, the MLEs of the trajectory data and the prediction results of the RC-NF model are similar. Note that the median of MLEs in the RC model is 1.8525, it is also no longer consistent with the MLE of the noiseless Lorenz system under our parameter settings (MLE$=0.91$). 

Close returns are used to identify segments in the chaotic time series that can stand in for the unstable periodic orbits that surround the strange attractor. For a trajectory $\bm{X}_{T+T_{valid}+t}$, $t=1, \dots, T_{test}$ on the test dataset, close returns segments can be identified in the data by making a two-dimensional close returns plot of
\begin{equation}\label{eq: close returns}
\|\bm{X}_t-\bm{X}_{t+p}\|_2
\begin{cases}
    < \varepsilon_0,\quad \text{black},\\
    > \varepsilon_0,\quad \text{white},
\end{cases}
\end{equation}
where lag $p>0$. The constant $\varepsilon_0 > 0$ is a fixed threshold depending on the diameter of the attractor, that is, $\varepsilon_0 \sim 10^{-2} \times (\text{Max}(\bm{X}_t) - \text{Min}(\bm{X}_t))$. We can draw a two-dimensional binary image of $(t,p)$. Considering our testing settings, we set $t = 1,\dots,1000$ and $p = 1,\dots,900$. A randomly selected trajectory on the test dataset is used to draw close returns maps of the trajectory data, RC prediction results, and RC-NF prediction results in Fig. \ref{fig: subfig: lorenclosereturn}. There are few differences in close returns maps among the three. Fig. \ref{fig: subfig: lorenzhistgram} is histograms of $p$ formed by the black pixels of all predicted trajectories. Again, the peak locations and heights of the RC-NF prediction results match those of the trajectory data, while RC can only match a few peak locations.

\begin{figure}[!ht]
    \centering
    \subfigure[Boxplots of MLEs]{
    \includegraphics[width=0.48\textwidth]{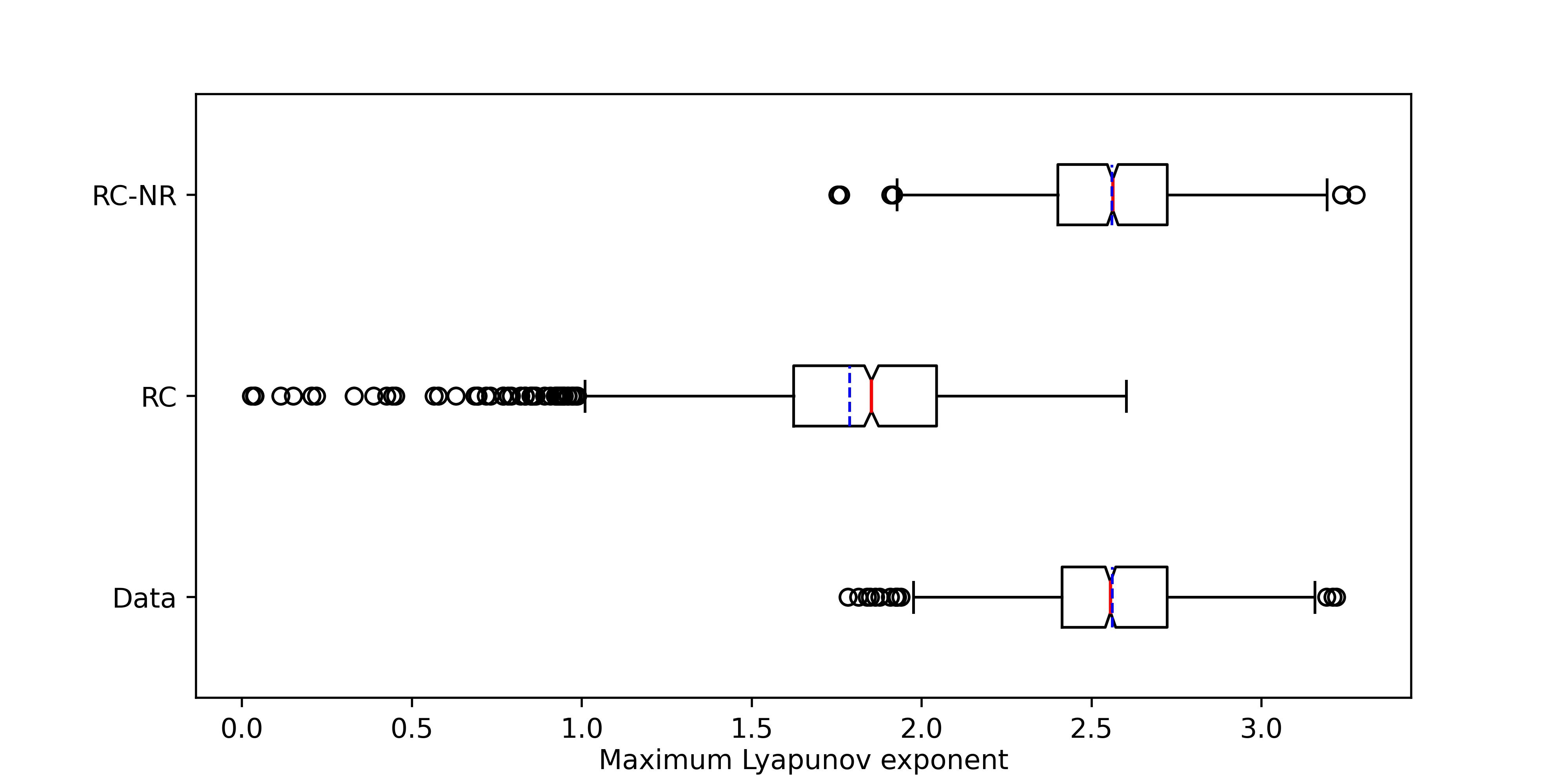}
    \label{fig: subfig: lorenzMLE}
    }
    \hspace{-0.35in}
    \subfigure[Histograms of close returns]{
    \includegraphics[width=0.48\textwidth]{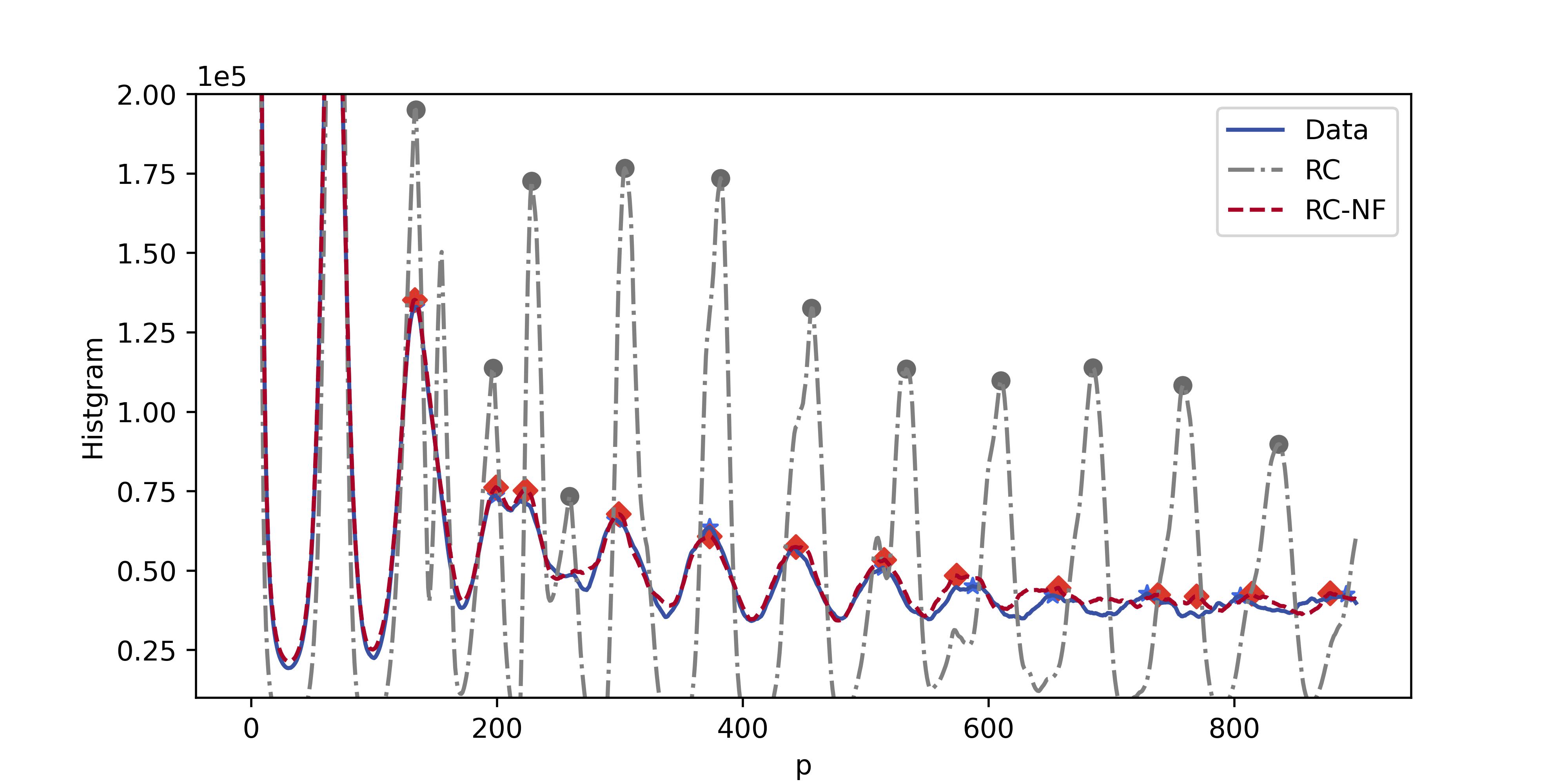}
    \label{fig: subfig: lorenzhistgram}
    }
    \makebox[\textwidth][c]{\subfigure[Close returns maps of a trajectory on the test dataset, data (left), RC (middle), RC-NF (right)]{
    \includegraphics[width=1.05\textwidth]{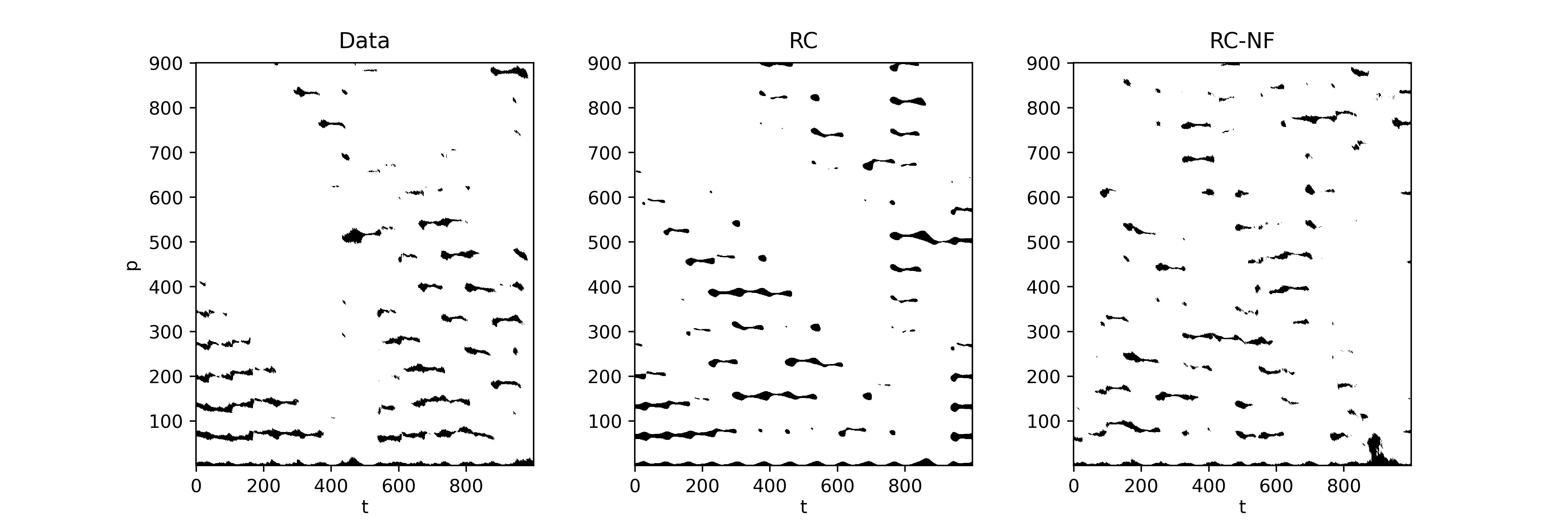}
    \label{fig: subfig: lorenclosereturn}
    }}
    \caption{Criteria of stochastic Lorenz systems. (a) Boxplots composed of MLEs of multiple trajectories. The red solid lines represent the medians of MLEs, and the blue dashed lines represent the means of MLEs. Medians: 2.5557 (data), 1.8525 (RC), 2.5622 (RC-NF). Means: 2.5609 (data), 1.7881 (RC), 2.5592 (RC-NF). (b) Close returns histograms. The solid blue line is calculated from the trajectory data, the gray dotted line is calculated from the RC prediction results, the red dashed line is calculated from the RC-NF prediction results, and their peaks are also marked with various markers. (c) Close returns maps of a trajectory in the testing phase.}
    \label{fig: lorenz indicator}
\end{figure}

We use the Euler-Maruyama scheme (time step $\delta t = 10^{-5}$), the RC model, and the RC-NF model to generate a long trajectory with the length of $10^{6}$ and observation time step $\Delta t = 0.01$, respectively. The warm-up time for RC and RC-NF models is $T_{warm}=500$. The probability density functions, autocorrelation functions (ACFs), and cross-correlation functions (CCFs) are compared using the long trajectories generated by the three methods, with results shown in Fig. \ref{fig: lorenz one track}. For these long trajectories, as depicted in Fig. \ref{fig: subfig: lorenzpdfonetrack}, the marginal probability density functions of the three dimensions indicate that the trajectory generated by the RC-NF model is closer to the trajectory generated by the numerical scheme, while the trajectory generated by RC is significantly different. The cross-correlation function between sequences $X$ and $Y$ is a deterministic function of lag $p$ defined as $r^{XY}_p = \text{Mean}_{t}\left((X_t-\bar{X})(Y_{t+p}-\bar{Y}))\right)/\sqrt{\text{Std}_{t}(X_t)\cdot\text{Std}_{t}(Y_t)}$, where $\bar{X}$ and $\bar{Y}$ represent the means of sequences $X$ and $Y$. In general, $r^{XY}_p \neq r^{YX}_p$, and $r^{XX}_p$ represents the ACF of sequence $X$. ACFs and CCFs of three long trajectories generated by different methods can be found in Fig. \ref{fig: subfig: lorenccf}. It is also clear that the results of the RC-NF model are closer to those of the numerical scheme.

\begin{figure}[!ht]
    \centering
    \subfigure[Marginal PDFs of a long trajectory]{
    \includegraphics[width=1.\textwidth]{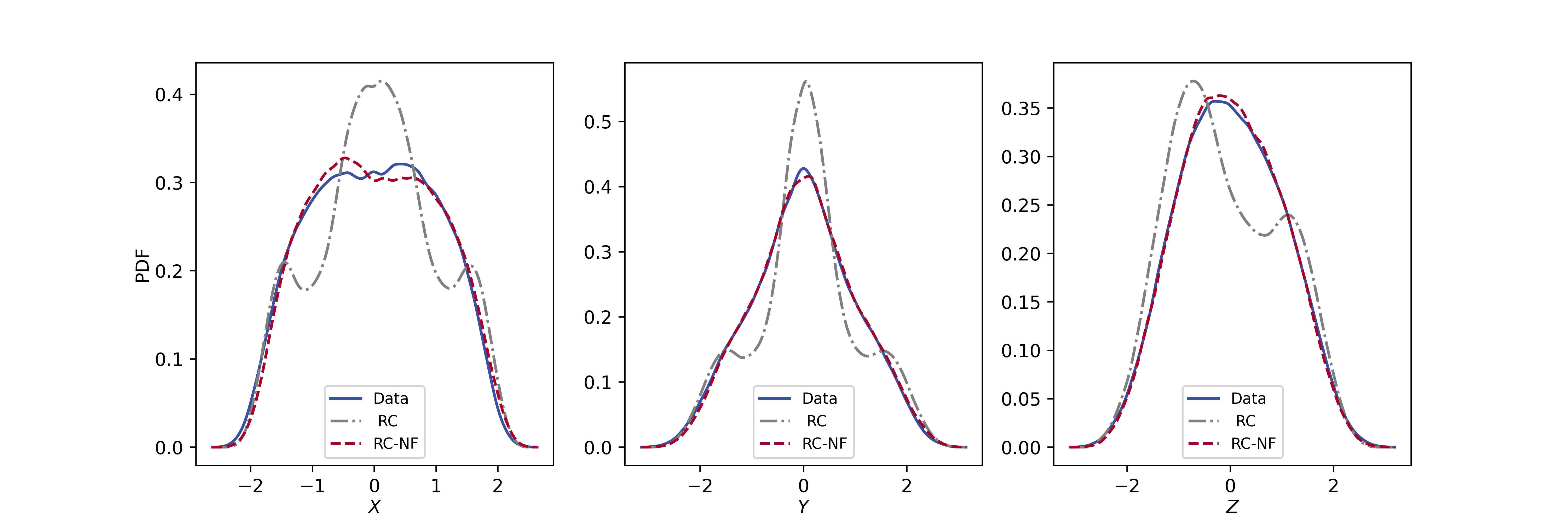}
    \label{fig: subfig: lorenzpdfonetrack}
    }\\
    \hspace{0.1in}
    \subfigure[ACFs (on the diagonal) and CCFs (off the diagonal) of a long trajectory]{
    \includegraphics[width=0.9\textwidth]{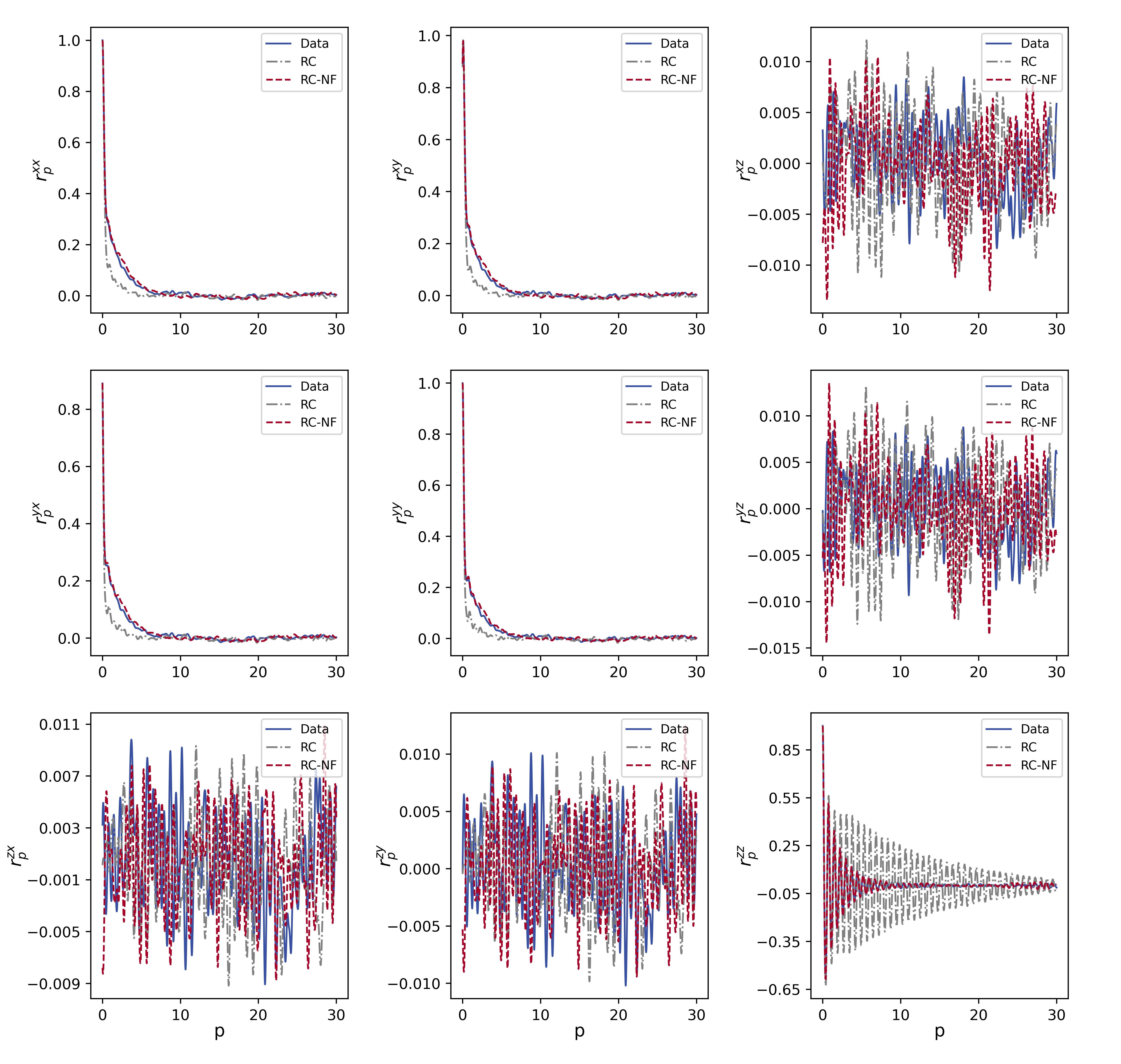}
    \label{fig: subfig: lorenccf}
    }
    \caption{Results of a long trajectory. (a) Marginal PDFs. (b) ACFs and CCFs. The blue solid lines are the results of the trajectory generated by the numerical scheme, the gray dotted lines are the results of the trajectory generated by the RC model, and the red dashed lines are the results of the trajectory generated by the RC-NF model.}
    \label{fig: lorenz one track}
\end{figure}

In summary, for the stochastic Lorenz system, we investigate the capacity of RC-NF to replicate strange attractors, the evolution of probability density, MLEs, and close returns maps on the test dataset. When comparing a long trajectory generated by different methods, we display the probability density functions, ACFs, and CCFs. RC-NF outperforms the traditional RC.

\section{Discussion} \label{sec: discussion}

In this paper, we devise a novel approach to predict the long-term evolution of stochastic dynamical systems and capture the corresponding dynamical behaviors. The framework (shorten as RC-NF) we proposed is model-free and takes only a few assumptions about the underlying stochastic dynamical systems (the approach is valid for SDEs/SDDEs, and it may be generalized to more types of systems). RC-NF integrates the virtues of Reservoir Computing for long-term prediction, as well as Normalizing Flow for producing samples from an approximated error distribution, both with low computational costs. We also illustrate the universality of the proposed framework under the discrete-time setting.

We verify the effectiveness of our framework with seven examples: the Ornstein-Uhlenbeck process, Double-Well system, stochastic Van der Pol oscillator, stochastic mixed-mode oscillation, linear SDDE, ENSO simplified model, and stochastic Lorenz system. Multiple criteria are reused in prediction tasks and generation tasks to manifest the effectiveness of RC-NF, namely, the probability density function, Wasserstein distance, KL divergence, transition rate, maximum Lyapunov exponent, close returns, autocorrelation function, and cross-correlation function. 
 
Despite the success of RC-NF, there are several directions that might be further explored. As we discussed in Sec. \ref{sec: convergence}, we consider the framework in discrete-time settings. While this meets the requirements of observing or generating data, it inevitably differs from the continuous time settings and causes errors. This relates to whether our proposed model can generate the true trajectory of the corresponding continuous stochastic systems. Besides, we shall extend the applicability of RC-NF to multiplicative noise or non-Gaussian noise to accommodate more general systems \cite{duan2015introduction}, which also implies a relaxation of Assumption \ref{assump: fix error}. More kinds of dynamical behaviors, such as stable/unstable/center manifolds, intermittency, bifurcation-induced tippings, and rate-induced tippings \cite{ashwin2012tipping}, should be investigated. Moreover, we have not taken into account more complicated RC frameworks, such as next generation reservoir computing\cite{gauthier2021next}, deep reservoir computing \cite{GALLICCHIO201787}, etc, whose effectiveness is unknown yet.

\section*{CRediT authorship contribution statement}
\textbf{Cheng Fang:} Conceptualization, Methodology, Software, Formal analysis, Writing - Original Draft. \textbf{Yubin Lu:} Conceptualization, Methodology, Formal analysis, Writing - Original Draft. \textbf{Ting Gao:} Conceptualization, Methodology, Funding acquisition, Writing - Review \& Editing. \textbf{Jinqiao Duan:} Discussion, Funding acquisition, Writing - Review \& Editing.

\section*{Declaration of competing interest}
The authors declare that they have no known competing financial interests or personal relationships that could have appeared to influence the work reported in this paper.

\section*{Data availability}
No data was used for the research described in the article. The code that supports the paper's findings can be found in GitHub \url{https://github.com/Fangransto/RC-NF}.

\section*{Acknowledgements}
We would like to thank Xu Sun, Ting Li, Wei Wei, and Luxuan Yang for helpful discussions. This work
was supported by the National Key Research and Development Program of China 2021ZD0201300, the National Natural Science Foundation of China 12141107, and Fundamental Research Funds for the Central Universities 5003011053. Lu was partially supported by the grant DOE DE-SC00222766 and NSF-2216926.

\appendix

\renewcommand\theequation{\Alph{section}.\arabic{equation}}
\setcounter{equation}{0}

\renewcommand\thetable{\Alph{section}.\arabic{table}}
\setcounter{table}{0}

\renewcommand\thefigure{\Alph{section}.\arabic{figure}}
\setcounter{figure}{0}

\renewcommand\theremark{\Alph{section}.\arabic{remark}}
\setcounter{remark}{0}

\renewcommand\thetheorem{\arabic{theorem}}
\setcounter{theorem}{0}

\section{Universality theory supplement} \label{appendix: convergence}

In this section, we supplement the theory required for the universality of Reservoir Computing, as well as the proofs of Theorem \ref{th: U lp uniqueness} and Theorem \ref{th: main}.

The following theorem states the universality of ESN (\ref{eq: esn}) in the $L^p$-sense. 

\begin{lemma} \label{lemma: ortega} (Gonon and Ortega \cite{Lukas2020}, Theorem 2)
Fix $p \in [1, \infty)$, let $\bm{Z}$ be a fixed $\mathbb{R}^d$-valued input process, and let $H$ be a functional such that $H(\bm{Z}) \in L^p(\Omega, \mathcal{F}, \mathbb{P})$. Suppose that the activation function $\sigma: \mathbb{R} \rightarrow \mathbb{R}$ is nonconstant, continuous, and has a bounded image. Then for every $\varepsilon > 0$, there exists $N \in \mathbb{N}$, $W_{in} \in \mathbb{M}_{N,d}$, $\bm{\zeta} \in \mathbb{R}^d$, $A \in \mathbb{M}_N$, $\bm{w} \in \mathbb{R}^N$ such that (\ref{eq: esn}) has the ESP, the corresponding filter is causal and time-invariant, the associated functional satisfies $H^{A, W_{in}, \bm{\zeta}}_{\bm{w}}(\bm{Z}) \in L^p(\Omega, \mathcal{F}, \mathbb{P})$ and 
\begin{equation} \label{functional Lp}
    \| H(\bm{Z}) - H^{A, W_{in}, \bm{\zeta}}_{\bm{w}}(\bm{Z}) \|_p < \varepsilon.
\end{equation}
\end{lemma}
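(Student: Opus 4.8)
The plan is to show that the family of ESN functionals $\{H^{A,W_{in},\bm\zeta}_{\bm w}\}$ is dense, in the $L^p$-sense, in the set of all functionals $H$ with $H(\bm Z)\in L^p(\Omega,\mathcal F,\mathbb P)$, via a three-step approximation: first replace $H(\bm Z)$ by a functional depending on only finitely many past inputs, then replace that by a continuous function of the finite window, and finally realize such a continuous window-function by an echo state network with a linear readout. Throughout I fix $\varepsilon>0$ and allocate an error budget of $\varepsilon/3$ to each step.

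First I would carry out the finite-memory reduction. Write $Y:=H(\pi_{\mathbb Z_-}(\bm Z))$, which is measurable with respect to $\mathcal G:=\sigma(\bm Z_s: s\le 0)$, and set $\mathcal G_m:=\sigma(\bm Z_{-m},\dots,\bm Z_0)$. Since the $\mathcal G_m$ increase to $\mathcal G$, L\'evy's upward martingale convergence theorem gives $\mathbb E[Y\mid\mathcal G_m]\to Y$ in $L^p$, so I can pick $m$ with $\|Y-\mathbb E[Y\mid\mathcal G_m]\|_p<\varepsilon/3$. By the Doob--Dynkin lemma there is a measurable $\psi:(\mathbb R^d)^{m+1}\to\mathbb R$ with $\mathbb E[Y\mid\mathcal G_m]=\psi(\bm Z_{-m},\dots,\bm Z_0)$. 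Letting $\mu$ denote the law of the window $(\bm Z_{-m},\dots,\bm Z_0)$ on $(\mathbb R^d)^{m+1}$, density of compactly supported continuous functions in $L^p(\mu)$ yields a continuous $\varphi$ with $\|\psi-\varphi\|_{L^p(\mu)}<\varepsilon/3$, equivalently $\|\mathbb E[Y\mid\mathcal G_m]-\varphi(\bm Z_{-m},\dots,\bm Z_0)\|_p<\varepsilon/3$.

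Next I would realize $\varphi$ of the window by an ESN. Because $\sigma$ is nonconstant, continuous, and bounded, Hornik's universal approximation theorem applies in $L^p(\mu)$: there exist $n$, weights $w_j$, vectors $a_j\in\mathbb R^{d(m+1)}$ and biases $b_j$ with
\begin{equation*}
\Big\|\,\varphi(\bm x)-\sum_{j=1}^n w_j\,\sigma\big(a_j^\top\bm x+b_j\big)\,\Big\|_{L^p(\mu)}<\varepsilon/3,\qquad \bm x=(\bm z_0,\dots,\bm z_{-m}).
\end{equation*}
I then construct a block-structured reservoir: one block acts as a strictly contractive linear delay line whose state at time $0$ records an approximate copy of the window $(\bm z_0,\dots,\bm z_{-m})$, and $n$ further neurons compute the features $\sigma(a_j^\top\bm x+b_j)$ by feeding the stored linear combinations $a_j^\top\bm x$ through the reservoir nonlinearity once; the readout $\bm w=(w_1,\dots,w_n,0,\dots)$ then reproduces the network sum as $\bm w^\top\bm r_0$. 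Since $\sigma$ has bounded image, every reservoir coordinate is bounded, so $H^{A,W_{in},\bm\zeta}_{\bm w}(\bm Z)=\bm w^\top\bm r_0$ is bounded, hence automatically in $L^p$; choosing $A$ of small operator norm renders the reservoir map a contraction on the bounded invariant region, which secures the echo state property and, via the fixed-point characterization, causality and time-invariance of the induced filter. Summing the three $\varepsilon/3$ bounds gives (\ref{functional Lp}).

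The hard part will be the realization step, where a single reservoir recursion must simultaneously store a finite-memory buffer (an intrinsically linear operation) and evaluate nonlinear features, while $\sigma$ is applied at every time step and has bounded image. Reconciling an accurate linear delay line with a bounded nonlinearity is the crux: it forces truncation to a compact window region, so that the stored linear combinations $a_j^\top\bm x$ stay in a range where the constructed features remain accurate with high $\mu$-probability, together with a careful decoupling of the memory block from the feature block inside $A$, all while keeping $A$ contractive enough to guarantee the echo state property. Making these two requirements compatible — rather than the routine martingale and density arguments of the first two steps — is where the real work lies.
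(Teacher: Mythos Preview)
The paper does not prove this lemma at all: it is stated as a quotation of Theorem~2 in Gonon and Ortega~\cite{Lukas2020} and then used as a black box in the proofs of Theorems~\ref{th: U lp uniqueness} and~\ref{th: main}. There is therefore no in-paper proof for your attempt to be compared against.

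For what it is worth, your three-step strategy (finite-memory reduction via martingale convergence, $L^p$-density of continuous functions, then neural-network realization of the window map) is indeed the skeleton of the argument in~\cite{Lukas2020}, and you have correctly located the genuine difficulty in the realization step. One point of caution: in the actual Gonon--Ortega construction the passage from a single-hidden-layer network on the finite window to a bona fide ESN recursion does not proceed by building a ``linear delay line'' inside the nonlinear reservoir, precisely because $\sigma$ is applied componentwise at every step and has bounded image. Instead, they first establish universality for reservoir systems with \emph{linear} state updates and \emph{polynomial} (or neural-network) readouts, and then convert such a system into one of the form~(\ref{eq: esn}) by absorbing the readout nonlinearity into an enlarged reservoir with a single additional nonlinear layer at the final step, using a scaling trick to keep the state map contractive and the ESP intact. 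Your sketch gestures at the right tension but the concrete mechanism you propose (simultaneous linear buffer and nonlinear feature neurons under a single bounded $\sigma$) would not go through as written; if you want to fill in the details you should follow the two-stage route of~\cite{Lukas2020} rather than attempt a direct one-shot construction.
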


\begin{remark} \label{remark: d-dimen fliter}
    The image of a filter map $U$ is a one-dimensional discrete time series. By stacking $d$ filters $U^{i}$, $i=1,2,\dots,d$, we can create a $d$-dimensional filter $\bm{U}: ((\mathbb{R}^d)^{\mathbb{Z}}, \otimes_{t \in \mathbb{Z}} \mathcal{B}(\mathbb{R}^d))\rightarrow ((\mathbb{R}^d)^{\mathbb{Z}}, \otimes_{t \in \mathbb{Z}} \mathcal{B}(\mathbb{R}^d))$. Additionally, by repeatedly utilizing Lemma \ref{lemma: ortega}, it is possible to create a $d$-dimensional $\bm{U}'$ whose components satisfy (\ref{eq: esn}).
\end{remark}

Based on the above Lemma \ref{lemma: ortega}, we show that if the time delay operator is bounded, then filters of the form (\ref{eq: esn}) are identical in the $L^p$-sense when the input sequences can be represented by each other using time delay operators.

\begin{theorem} 
Let $U$ be a causal and time-invariant filter and its associated functional is $H$. Fix $p \in [1, \infty)$, let $\bm{Z}$ be a fixed $\mathbb{R}^d$-valued input process, and $H(\pi_{\mathbb{Z}_-}(\bm{Z})) \in L^p(\Omega, \mathcal{F}, \mathbb{P})$. Assume the time delay operator $T_{-s}$ is a bounded operator for any $s \in \mathbb{Z}$. Then there exists causal and time-invariant filters $U'$, $U^{s'}$ satisfying (\ref{eq: esn}), constructed by input sequences $\pi_{\mathbb{Z}_-}(\bm{Z})$, $\pi_{\mathbb{Z}_-} \circ T_{-s} (\bm{Z})$, respectively, that are identical in the $L^p$-sense. That is, for every $\varepsilon > 0$, these causal and time-invariant filters $U'$, $U^{s'}$ satisfy $\| U'(\bm{Z})_s - U^{s'}(\bm{Z})_s \|_p < (\| T_{-s}\|_p + 1)\varepsilon$.
\end{theorem}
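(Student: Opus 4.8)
The plan is to invoke Lemma \ref{lemma: ortega} twice --- once with the input process $\pi_{\mathbb{Z}_-}(\bm{Z})$ and once with its time shift $\pi_{\mathbb{Z}_-}\circ T_{-s}(\bm{Z})$ --- and then glue the two approximations together using the filter/functional dictionary and the time-invariance of $U$. The identities that drive everything are the following. Since $U$ is causal and time-invariant, $U=U_{H}$, so $U(\bm{Z})_0=H(\pi_{\mathbb{Z}_-}(\bm{Z}))$ and, because $T_{-s}\circ U=U\circ T_{-s}$, also $U(\bm{Z})_s=U(T_{-s}\bm{Z})_0=H(\pi_{\mathbb{Z}_-}(T_{-s}\bm{Z}))$. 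The same identities hold verbatim for any causal time-invariant filter in place of $U$, in particular for the ESN filters produced by Lemma \ref{lemma: ortega}. Using the boundedness of $T_{-s}$, $\|H(\pi_{\mathbb{Z}_-}(T_{-s}\bm{Z}))\|_p=\|U(\bm{Z})_s\|_p\le\|T_{-s}\|_p\,\|U(\bm{Z})_0\|_p<\infty$, so the hypothesis of Lemma \ref{lemma: ortega} is satisfied for the shifted input as well.

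Next I would apply Lemma \ref{lemma: ortega} to $(\pi_{\mathbb{Z}_-}(\bm{Z}),H)$ to obtain an ESN filter $U'$ (causal, time-invariant, with the ESP) whose functional $H'$ satisfies $\|H(\pi_{\mathbb{Z}_-}(\bm{Z}))-H'(\pi_{\mathbb{Z}_-}(\bm{Z}))\|_p<\varepsilon$, and apply it to $(\pi_{\mathbb{Z}_-}\circ T_{-s}(\bm{Z}),H)$ to obtain an ESN filter $U^{s'}$ with functional $H^{s'}$ satisfying $\|H(\pi_{\mathbb{Z}_-}(T_{-s}\bm{Z}))-H^{s'}(\pi_{\mathbb{Z}_-}(T_{-s}\bm{Z}))\|_p<\varepsilon$. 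Then I split $\|U'(\bm{Z})_s-U^{s'}(\bm{Z})_s\|_p\le\|U'(\bm{Z})_s-U(\bm{Z})_s\|_p+\|U(\bm{Z})_s-U^{s'}(\bm{Z})_s\|_p$. For the second term, the identities above give $U^{s'}(\bm{Z})_s=H^{s'}(\pi_{\mathbb{Z}_-}(T_{-s}\bm{Z}))$ and $U(\bm{Z})_s=H(\pi_{\mathbb{Z}_-}(T_{-s}\bm{Z}))$, so it is bounded by $\varepsilon$ directly. For the first term, the difference filter $V:=U'-U$ is again causal and time-invariant with functional $G:=H'-H$, so $\|V(\bm{Z})_0\|_p=\|G(\pi_{\mathbb{Z}_-}(\bm{Z}))\|_p<\varepsilon$; by time-invariance of $V$, $U'(\bm{Z})_s-U(\bm{Z})_s=V(\bm{Z})_s=(T_{-s}V(\bm{Z}))_0$, and the boundedness of $T_{-s}$ yields $\|U'(\bm{Z})_s-U(\bm{Z})_s\|_p\le\|T_{-s}\|_p\,\|V(\bm{Z})_0\|_p<\|T_{-s}\|_p\,\varepsilon$. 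Adding the two bounds gives $\|U'(\bm{Z})_s-U^{s'}(\bm{Z})_s\|_p<(\|T_{-s}\|_p+1)\varepsilon$.

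The main obstacle is the step that transports the approximation bound from time $0$ to time $s$, i.e.\ passing from $\|V(\bm{Z})_0\|_p<\varepsilon$ to $\|V(\bm{Z})_s\|_p<\|T_{-s}\|_p\varepsilon$. This requires pinning down the operator-norm sense in which $T_{-s}$ is ``bounded'' --- namely as an operator acting on the $L^p$-process carrying the filtered output, so that coordinate evaluation composes correctly with it --- together with the short verification that the difference of two causal time-invariant filters is again causal and time-invariant with functional equal to the difference of the functionals. Routing the estimate through the \emph{output} process via time-invariance is essential here, since the nonlinearity of $G$ forbids pulling $T_{-s}$ through $G$ on the input side. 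Everything else is bookkeeping with the $U\leftrightarrow H$ correspondence and the triangle inequality.
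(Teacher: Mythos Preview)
Your proposal is correct and follows essentially the same route as the paper: apply Lemma \ref{lemma: ortega} twice (once to $\pi_{\mathbb{Z}_-}(\bm{Z})$ and once to its shift), insert $U(\bm{Z})_s$ via the triangle inequality, and use time-invariance to commute $T_{-s}$ past the filters so that the boundedness of $T_{-s}$ transports the $\varepsilon$-bound from time $0$ to time $s$. Your explicit introduction of the difference filter $V=U'-U$ is just a repackaging of the paper's step $\|T_{-s}(U'(\bm{Z}^{(1)}))_0 - T_{-s}(U(\bm{Z}^{(1)}))_0\|_p \le \|T_{-s}\|_p\|U'(\bm{Z}^{(1)})_0-U(\bm{Z}^{(1)})_0\|_p$, and your flagging of the ``operator-norm sense'' issue is apt --- the paper leaves that point equally implicit.
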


\begin{proof}

Without losing the generality, we consider the bounded time delay operator $T_{-1}$. Denoting $\bm{Z}^{(1)} = (\dots, \bm{Z}_{-1},\bm{Z}_{0}) \in ((\mathbb{R}^d)^{\mathbb{Z}_-}, \otimes_{t \in \mathbb{Z}_-} \mathcal{B}(\mathbb{R}^d))$, $\bm{Z}^{(2)} = (\dots, \bm{Z}_{-1}, \bm{Z}_{0}, \bm{Z}_{1}) \in ((\mathbb{R}^d)^{\mathbb{Z}_-}, \otimes_{t \in \mathbb{Z}_-} \mathcal{B}(\mathbb{R}^d))$, we have $\bm{Z}^{(2)} = T_{-1}(\bm{Z}^{(1)})$. Due to the boundedness of $T_{-1}$, $H(\bm{Z}^{(2)}) \in L^p(\Omega, \mathcal{F}, \mathbb{P}).$

Using Lemma \ref{lemma: ortega}, for every $\varepsilon > 0$, there exist causal and time-invariant filters of the form (\ref{eq: esn}) and associated functionals for input sequences $\bm{Z}^{(1)}$ and $\bm{Z}^{(2)}$, denoted as pairs ($U'$, $H'$), ($U''$, $H''$), respectively. In particular, for arbitrary $\varepsilon > 0$, $ \| H(\bm{Z}^{(1)}) - H'(\bm{Z}^{(1)}) \|_p < \varepsilon$, $ \| H(\bm{Z}^{(2)}) - H''(\bm{Z}^{(2)}) \|_p < \varepsilon$. Hence, for input sequence $\bm{Z}^{(2)}$,
\begin{equation*}
\begin{split}
    &\| H'(\bm{Z}^{(2)}) - H''(\bm{Z}^{(2)}) \|_p \\
    = & \| H'(\bm{Z}^{(2)}) - H(\bm{Z}^{(2)}) + H(\bm{Z}^{(2)}) - H''(\bm{Z}^{(2)}) \|_p \\
    \leq & \| H'(\bm{Z}^{(2)}) - H(\bm{Z}^{(2)}) \|_p + \| H(\bm{Z}^{(2)}) - H''(\bm{Z}^{(2)}) \|_p  \\
    = & \| H'(T_{-1}(\bm{Z}^{(1)})) - H(T_{-1}(\bm{Z}^{(1)})) \|_p + \| H(\bm{Z}^{(2)}) - H''(\bm{Z}^{(2)}) \|_p \\
    = & \| U'(T_{-1}(\bm{Z}^{(1)}))_0 - U(T_{-1}(\bm{Z}^{(1)}))_0 \|_p + \| H(\bm{Z}^{(2)}) - H''(\bm{Z}^{(2)}) \|_p \\  
    = & \| T_{-1}(U'(\bm{Z}^{(1)}))_0 - T_{-1}(U(\bm{Z}^{(1)}))_0 \|_p + \| H(\bm{Z}^{(2)}) - H''(\bm{Z}^{(2)}) \|_p \\ 
    \leq & \| T_{-1}\|_p\|(U'(\bm{Z}^{(1)}))_0 - (U(\bm{Z}^{(1)}))_0 \|_p + \| H(\bm{Z}^{(2)}) - H''(\bm{Z}^{(2)}) \|_p \\
    = & \| T_{-1}\|_p\|H'(\bm{Z}^{(1)}) - H(\bm{Z}^{(1)}) \|_p + \| H(\bm{Z}^{(2)}) - H''(\bm{Z}^{(2)}) \|_p \\ 
    < & (\| T_{-1}\|_p + 1)\varepsilon ,
\end{split}
\end{equation*}
 which is equivalent to $\| U'(\bm{Z}^{(2)})_0 - U''(\bm{Z}^{(2)})_0 \|_p < (\| T_{-1}\|_p + 1)\varepsilon$. For every $s \in \mathbb{Z}$, $\bm{Z}^{(s)}$ can be constructed similarly such that $\| U'(\bm{Z}^{(s)})_0 - U^{s'}(\bm{Z}^{(s)})_0 \|_p < (\| T_{-s}\|_p + 1)\varepsilon$. More generally, $\| U'(\bm{Z})_s - U^{s'}(\bm{Z})_s \|_p < (\| T_{-s}\|_p + 1)\varepsilon$ for any $s \in \mathbb{Z}$. The required result follows due to the arbitrariness of $\varepsilon$.  
\end{proof}

 We generalize the conclusions of Lemma \ref{lemma: ortega} and Theorem \ref{th: U lp uniqueness} to discrete time series $(\bm{X}_t^{\delta t})_{t \in  \mathbb{Z}}$ based on the numerical scheme.

\begin{theorem} 
Define a filter $\bm{U}: ((\mathbb{R}^d)^{\mathbb{Z}}, \otimes_{t \in \mathbb{Z}} \mathcal{B}(\mathbb{R}^d)) \rightarrow ((\mathbb{R}^d)^{\mathbb{Z}}, \otimes_{t \in \mathbb{Z}} \mathcal{B}(\mathbb{R}^d))$ by $\bm{U}((\bm{X}_t^{\delta t})_{t \in  \mathbb{Z}})_t = \bm{X}_{t+1}^{\delta t}$. 
Let $(\bm{X}_t^{\delta t})^{(1)}:=(\dots, \bm{X}^{\delta t}_{t-1}, \bm{X}^{\delta t}_{t})$ be a fixed $\mathbb{R}^d$-valued input process. 
Then for arbitrary $\varepsilon > 0$, there exists $N \in \mathbb{N}$, $W_{in} \in \mathbb{M}_{N,d}$, $\bm{\zeta} \in \mathbb{R}^d$, $A \in \mathbb{M}_N$, $\bm{w} \in \mathbb{R}^N$ such that (\ref{eq: esn}) has the ESP, the corresponding filter is causal and time-invariant, the associated functional satisfies $\bm{H}^{A, W_{in}, \bm{\zeta}}_{\bm{w}}((\bm{X}_t^{\delta t})^{(1)}) \in L^2(\Omega, \mathcal{F}, \mathbb{P})$ and 
\begin{equation} \label{appendix: functional Lp x}
    \| \bm{H}((\bm{X}_t^{\delta t})^{(1)}) - \bm{H}^{A, W_{in}, \bm{\zeta}}_{\bm{w}}((\bm{X}_t^{\delta t})^{(1)}) \|_2 < \varepsilon.
\end{equation}
Additionally, for any $s \in \mathbb{Z}$, consider input sequences $(\bm{X}_t^{\delta t})^{(1)}$ and $T_{-s}((\bm{X}_t^{\delta t})^{(1)})$, there exists causal and time-invariant filters $U'$, $U^{s'}$ satisfying (\ref{eq: esn}), constructed by input sequences $(\bm{X}_t^{\delta t})^{(1)}$, $T_{-s}((\bm{X}_t^{\delta t})^{(1)})$, that are identical in the $L^2$-sense.
\end{theorem}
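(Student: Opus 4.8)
The plan is to obtain this statement by assembling the pieces already in place: the convergence of the Euler--Maruyama scheme (Lemma~\ref{lemma: strong convergence}), the $L^p$-universality of the echo state network (Lemma~\ref{lemma: ortega}), its $\mathbb{R}^d$-valued version (Remark~\ref{remark: d-dimen fliter}), and the shift-invariance estimate (Theorem~\ref{th: U lp uniqueness}). No new estimate is required; the work is in verifying that the hypotheses of these results hold for the one-step-ahead filter acting on the discretised trajectory. First I would record the integrability input: by Lemma~\ref{lemma: strong convergence} one has $\bm{X}^{\delta t}_t \in L^2(\Omega,\mathcal{F},\mathbb{P})$ for every $t$, hence the target random variable $\bm{X}^{\delta t}_{t+1} = \bm{H}\bigl((\bm{X}^{\delta t}_t)^{(1)}\bigr)$ lies in $L^2$, so each scalar component $H^i$ of the associated functional satisfies $H^i\bigl((\bm{X}^{\delta t}_t)^{(1)}\bigr)\in L^2$. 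Here I would also note that $\bm{U}$, viewed as a filter on bi-infinite sequences, is causal and time-invariant with associated functional $\bm{H}$, so that Lemma~\ref{lemma: ortega} applies.

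Next I would apply Lemma~\ref{lemma: ortega} with $p=2$ to each scalar component $H^i$, producing scalar ESN parameters whose associated functional approximates $H^i$ within $\varepsilon/\sqrt{d}$ in $L^2$, with the echo state property, causality and time-invariance all supplied by that lemma. Stacking the $d$ one-dimensional filters (Remark~\ref{remark: d-dimen fliter}) yields $N\in\mathbb{N}$, $W_{in}$, $\bm{\zeta}$, $A$ and a readout $\bm{w}$ for which (\ref{eq: esn}) has the ESP, is causal and time-invariant, the $\mathbb{R}^d$-valued associated functional is in $L^2$, and the Euclidean $L^2$-error is below $\varepsilon$; this is precisely the estimate (\ref{appendix: functional Lp x}).

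For the final clause I would invoke Theorem~\ref{th: U lp uniqueness} (equivalently, repeat its triangle-inequality argument) with input process $(\bm{X}^{\delta t}_t)_{t\in\mathbb{Z}}$, taking $\bm{Z}^{(1)}=(\bm{X}^{\delta t}_t)^{(1)}$ and $\bm{Z}^{(2)}=T_{-s}\bigl((\bm{X}^{\delta t}_t)^{(1)}\bigr)$. Boundedness of $T_{-s}$ gives $\bm{H}(\bm{Z}^{(2)})\in L^2$, and two applications of Lemma~\ref{lemma: ortega} combined with the operator bound for $T_{-s}$ yield causal, time-invariant filters $U'$ and $U^{s'}$ of the form (\ref{eq: esn}), built from $(\bm{X}^{\delta t}_t)^{(1)}$ and $T_{-s}\bigl((\bm{X}^{\delta t}_t)^{(1)}\bigr)$ respectively, that agree in the $L^2$-sense (up to the factor $\|T_{-s}\|_2+1$ already seen in Theorem~\ref{th: U lp uniqueness}).

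The main obstacle is less a difficulty than a point demanding care: justifying what is automatic in the deterministic setting, namely that the one-step-ahead object $\bm{X}^{\delta t}_{t+1}$ is a genuine $L^2$ target of a causal functional of the past realisation -- handled by Lemma~\ref{lemma: strong convergence} together with treating the input sequence as carrying all the randomness, including the driving increment -- and that the time-delay operator $T_{-s}$ is $L^2$-bounded for a possibly non-stationary solution. On a finite observation horizon the latter is immediate, since there $T_{-s}$ merely permutes finitely many coordinates; otherwise one simply carries the boundedness assumption of Theorem~\ref{th: U lp uniqueness}. Everything else -- passing from scalar to $\mathbb{R}^d$-valued filters and tracking the $\varepsilon$'s -- is routine bookkeeping.
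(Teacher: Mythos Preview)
Your proposal is correct and follows essentially the same route as the paper: verify $\bm{X}^{\delta t}_{t+1}\in L^2$ (the paper cites the discussion after Lemma~\ref{lemma: strong convergence}), apply Lemma~\ref{lemma: ortega} componentwise and stack via Remark~\ref{remark: d-dimen fliter} to obtain (\ref{appendix: functional Lp x}), then note $T_{-s}(\bm{X}^{\delta t}_t)=\bm{X}^{\delta t}_{t+s}\in L^2$ gives boundedness of $T_{-s}$ and invoke Theorem~\ref{th: U lp uniqueness}. The only difference is presentational: the paper reduces to $d=1$ at the outset rather than tracking the $\varepsilon/\sqrt{d}$ split explicitly, and justifies boundedness of $T_{-s}$ directly from $\bm{X}^{\delta t}_{t+s}\in L^2$ rather than via a finite-horizon argument.
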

\begin{proof}
Let's consider that the dimension of $X$ is $1$, and the $d$-dimension case can be easily generalized by Remark \ref{remark: d-dimen fliter}. For the filter $U$ defined in Theorem \ref{th: main}, the associated functional is $H((X_t^{\delta t})^{(1)}):= U((X_t^{\delta t})^{(1)})_0 = X_{t+1}^{\delta t} \in L^2(\Omega, \mathcal{F}, \mathbb{P})$. Then the formula (\ref{appendix: functional Lp x}) is obtained by Lemma \ref{lemma: ortega}.

For every $s \in \mathbb{Z}$, we have $T_{-s}(X_t^{\delta t}) = X_{t+s}^{\delta t} \in L^2(\Omega, \mathcal{F}, \mathbb{P})$, which means that $T_{-s}$ is a bounded operator. The required result follows by utilizing Theorem \ref{th: U lp uniqueness}.
\end{proof}

\section{The difference between RC and ESN on linear SDDE} \label{appendix: ESN}

We use the linear SDDE mentioned in Sec. \ref{sec: exper: linear sdde} to prove the difference between RC (\ref{eq: rc update}), (\ref{eq: rc readout}) and ESN (\ref{eq: esn}). The specific settings are identical to those in Sec. \ref{sec: exper: linear sdde}, with the exception that the ESN framework is simpler and does not require the selection of hyperparameter $\alpha$. We show some results that differ significantly between the two. 

The mean of Wasserstein distance of the ESN-NF method during the testing phase is $8.0540\times10^{-3}$, whereas the RC-NF method is $7.9967\times10^{-3}$. During the test period, the mean of KL divergence of the ESN-NF method in testing is $1.7927\times10^{-3}$, while the RC-NF method is $1.4012\times10^{-3}$. By imitating Fig. \ref{fig: ou rc and rc-nf} in Sec. \ref{sec: exper: ou}, we depict the rolling prediction results of the ESN-NF method and RC-NF method in Fig. \ref{fig: linear sdde rc and esn}.
\begin{figure}[!ht]
  \centering
   \subfigure[Rolling predictions of ESN-NF]{
    \label{fig: subfig: linear sdde esn} 
    \includegraphics[width=0.49\textwidth]{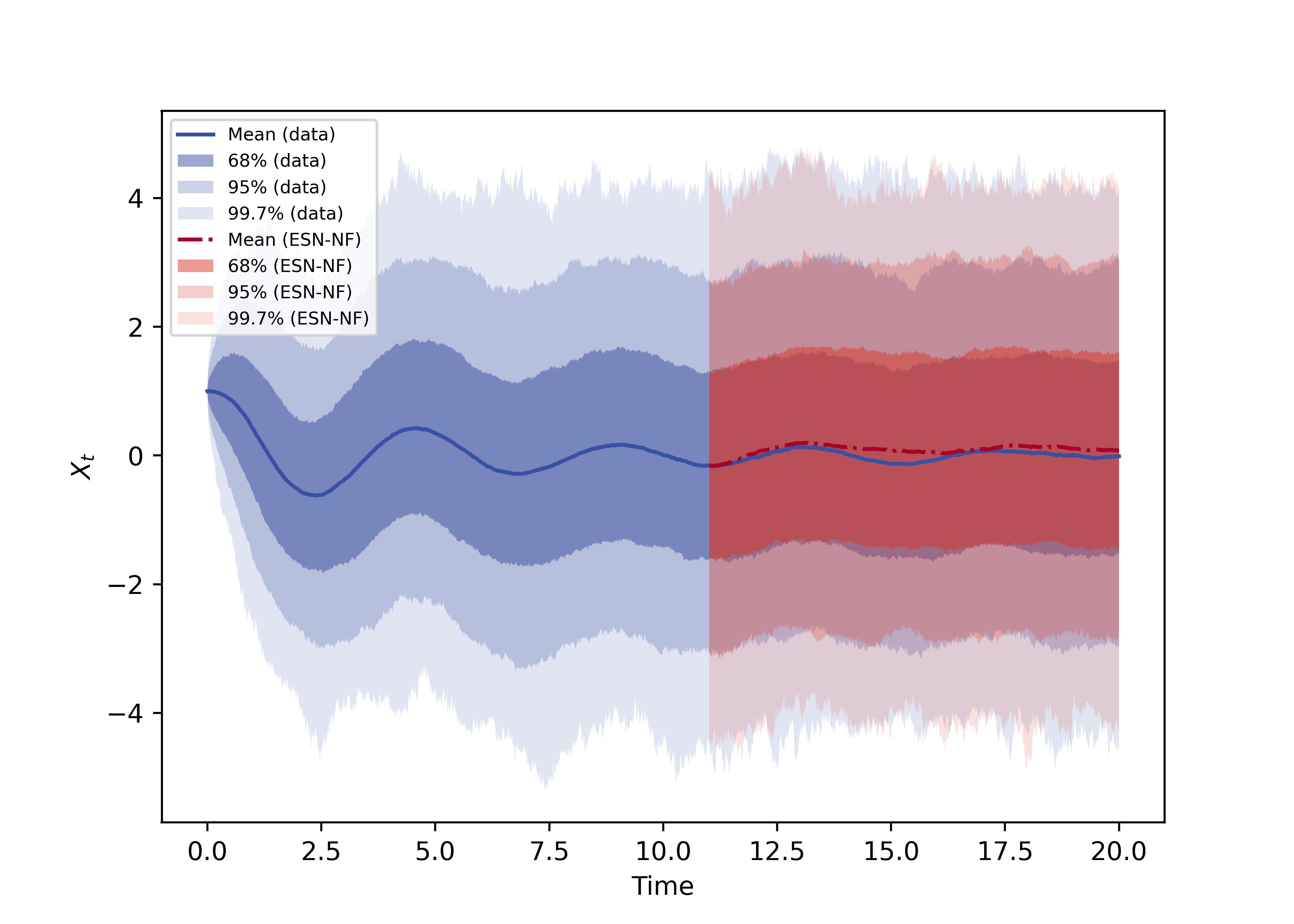}}
  \hspace{-0.1in}  
  \subfigure[Rolling predictions of RC-NF]{
    \label{fig: subfig: linear sdde rc-nf} 
    \includegraphics[width=0.49\textwidth]{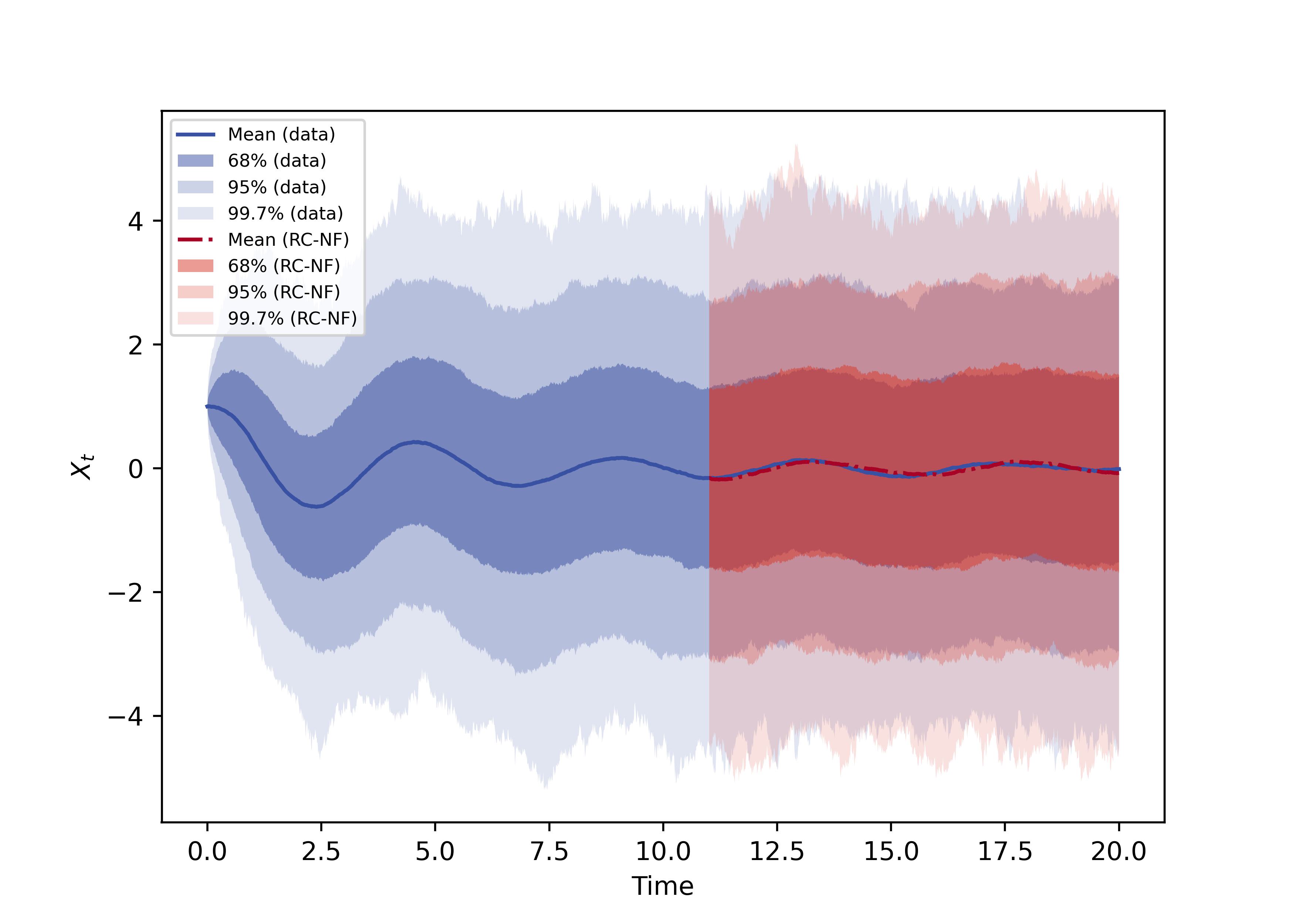}}
  \caption{Rolling predictions of ESN-NF and RC-NF. The solid blue line denotes the sample mean, and the shaded blue portions from dark to light represent the intervals calculated from the trajectory data. Rolling predictions using ESN-NF and RC-NF produce the red dotted dashes and shaded areas. (a) Rolling predictions of ESN-NF; (b) Rolling predictions of RC-NF.}
  \label{fig: linear sdde rc and esn} 
\end{figure}

Results-wise, the rolling prediction results of RC-NF are slightly superior to those of ESN-NF. Undoubtedly, the experiment presented in this section shows that ESN is capable of yielding satisfactory prediction results, which serves as evidence for the theories stated in Sec. \ref{sec: convergence}. However, RC produces a better result than ESN.

\section{Hyperparameters of various experiments} \label{appendix: hyper}

We initialize the Bayesian Optimization search process with the provided initial parameters and set the number of iterations to 50. After 50 iterations, the hyperparameters of RC corresponding to the smallest $\mathcal{L}_2$ error selected by BO on the validation dataset are shown in Table \ref{tab: hyper select}.

\begin{table}[!ht] 
\caption{The optimal hyperparameters selected by the Bayesian Optimization algorithm in each example.}
\label{tab: hyper select}
\centering
\begin{tabular}{lccccc} 
\bottomrule
\textbf{Experiments} & $\rho$ & $k$ & $\chi$ & $\alpha$ & $\lambda$ \\ 
\hline
 OU process & 0.5173 & 3 & 0.8933 & 0.8570 & $1.0000\times10^{0}$\\
 DW system & 0.8609 & 3 & 1.3469 & 0.9839 & $5.7206\times10^{-2}$\\
 Van der Pol oscillator & 0.5192 & 3 & 1.2345 & 0.6074 & $1.8232\times10^{-2}$\\
 stochastic MMO & 0.8609 & 3 & 1.3469 & 0.9839 & $5.7206\times10^{-2}$\\
 Linear SDDE & 0.9324 & 4 & 0.3655 & 0.2008 & $2.2073\times10^{-6}$\\
 ESNO simplified model & 0.8654 & 1 & 0.6024 & 0.0500 & $5.0616\times10^{-5}$\\
 stochastic Lorenz system & 0.3972 & 5 & 0.3817 & 0.9694 & $7.7623\times10^{-2}$\\
\toprule
\end{tabular}
\end{table}

\bibliographystyle{unsrt}
\bibliography{sample.bib}

\end{document}